\documentclass[11pt]{amsart}
\usepackage[colorlinks=true, pdfstartview=FitV, linkcolor=blue, citecolor=blue, urlcolor=blue, breaklinks=true]{hyperref}
\usepackage{amsmath,amsfonts,amssymb,amsthm,mathrsfs,amscd,comment,paralist,stmaryrd,etoolbox,mathtools,enumitem}
\usepackage[usenames,dvipsnames]{color}
\usepackage{booktabs}
\usepackage[all]{xy}
\usepackage{tikz}
\usetikzlibrary{decorations.pathmorphing}

%
%

\theoremstyle{plain}
\newtheorem{thm}{Theorem}[section]
\newtheorem{prop}[thm]{Proposition}
\newtheorem{lemma}[thm]{Lemma}
\newtheorem{cor}[thm]{Corollary}

\theoremstyle{definition}
\newtheorem{defn}[thm]{Definition}
\newtheorem{rmk}[thm]{Remark}
\newtheorem{ex}[thm]{Example}

\numberwithin{equation}{section}

%
%

\newcommand{\g}{\mathfrak{g}}
\newcommand{\h}{\mathfrak{h}}
\newcommand{\bb}{\mathfrak{b}}

\newcommand{\n}{\mathfrak{n}}
\newcommand{\q}{\mathfrak{q}}
\newcommand{\gl}{\mathfrak{gl}}
\newcommand{\m}{\mathfrak{m}}

\newcommand{\Z}{\mathbb{Z}}
\newcommand{\C}{\mathbb{C}}
\newcommand{\N}{\mathbb{N}}

\DeclareMathOperator{\Hom}{Hom}

\DeclareMathOperator{\End}{End}

\DeclareMathOperator{\wt}{wt}

%
%

\leftmargin=0in
\topmargin=0pt
\headheight=0pt
\oddsidemargin=0in
\evensidemargin=0in
\textheight=8.75in
\textwidth=6.5in
\parindent=0.5cm
\headsep=0.25in
\widowpenalty10000
\clubpenalty10000

%
%

\newtoggle{comments}
\newtoggle{details}
\newtoggle{prelimnote}
\newtoggle{detailsnote}




\toggletrue{detailsnote}   

\iftoggle{comments}{%
  \newcommand{\comments}[1]{
    \begin{center}
      \parbox{6.5 in}{
        \color{red}
          {\footnotesize \textbf{Comments:} #1}
        \color{black}}
    \end{center}}
}{%
  \newcommand{\comments}[1]{}
}

\iftoggle{details}{%
  \newcommand{\details}[1]{
      \ \\
      \color{OliveGreen}
        \begin{footnotesize}
          \textbf{Details:} #1
        \end{footnotesize}
      \color{black}
      \\
  }
}{%
  \newcommand{\details}[1]{}
}

\iftoggle{prelimnote}{%
  
}{%
  
}

%
\begin{document}
%

\title{Weyl modules for queer Lie superalgebras}

\author{Saudamini Nayak}
\address{ Department of Mathematics, National Institute of Technology Calicut, NIT Campus P.O., Kozhikode-673 601, India}
\email{saudamini@nitc.ac.in}

\thanks{}

\begin{abstract}
 We define global and local Weyl modules for $q \otimes A$, where $q$ is the queer Lie superalgebra and $A$ is an associative commutative unital $\mathbb{C}-$algebra. We prove that global Weyl modules are universal highest weight objects in certain category upto parity reversing functor $\Pi$. Then with the assumption that $A$ is finitely generated and with a special technical condition which simple root system of $q$ satisfy it is shown that the local Weyl modules are finite dimensional. Further they are universal highest map-weight objects in certain category upto $\Pi$. Finally we prove a tensor product property for local Weyl modules.
\end{abstract}

\subjclass[2010]{17B65, 17B10.}
\keywords{Queer Lie superalgebra,  Weyl module, tensor product.}

\maketitle
\thispagestyle{empty}

\setcounter{tocdepth}{1}


%
\section{Introduction}

The theory of Lie superalgebras and their representations have a wide range of applications in many areas of physics and mathematics such as describing supersymmetry, in string theory, conformal field theory and number theory to name a few. In 1977, Kac classified the simple Lie superalgebras $\g$ over $\C$  \cite{Kac77}. These are divided into three groups namely: basic Lie superalgebras (which means the classical and exceptional series), the strange ones (often also called periplectic and queer) and the ones of Cartan type. Kac also classified the simple finite dimensional representations of the  basic classical Lie superalgebras in \cite{Kac77a, Kac77, Kac78}. In recent times there  has been much interest in understanding finite dimensional modules for the Lie superalgebra $\g\otimes A$ where $\g$ is simple finite dimensional Lie superalgebra and $A$ is commutative associative algebra with unit over complex numbers $\C$.  For example, If we take $A = \C[X]$, then the Lie superalgebra $\g\otimes \C[X]$ is called a current superalgebra. If we take $A = \C[X, X^{-1}]$, then $\g\otimes \C[X, X^{-1}]$ is called a loop superalgebra. If we take $A = \C[X_1^{\pm 1},\ldots, X_n^{\pm 1}]$, then  $ \g \otimes \C[X_1^{\pm 1},\ldots, X_n^{\pm 1}]$ is called a multiloop superalgebra. The classification of finite dimensional irreducible modules for multiloop superalgebra is obtained in \cite{Rao13, RZ04}. In more general setting, the irreducible finite dimensional modules were classified in \cite{Sav14, CMS16}. 

The Weyl modules play important role in the representation theory of infinite-dimensional Lie algebras. Chari and Pressley \cite{CP01} introduced Weyl modules (global and local) for the loop algebra $\g\otimes \C[X^{\pm 1}]$, where $\g$ is simple Lie algebra over $\C$ and proved that these modules are indexed by dominant integral weights of $\g$ and are closely related to certain irreducible modules for quantum affine algebras.  Feigin and Loktev \cite{FL04} extended the notion of Weyl modules to the higher- dimensional case, i.e., instead of the loop algebra they worked with the Lie algebra $\g \otimes A$ where $A$ is the coordinate ring of an algebraic variety and obtained analogs of some of the results of \cite{CP01}. Later in \cite{CFK10}, Chari et. al., consider a more general functorial approach to Weyl modules associated to the algebra $\g \otimes A$ where $A$ is commutative associative unital algebra over $\C$. Also twisted versions of Weyl modules have been defined and investigated in \cite{CFS08, FMS13}. The Weyl modules for equivariant map algebras has been studied in \cite{FMS15}. 

However, in super setting the study of Weyl modules has less developed than the corresponding theory in Lie algebras. At first Zhang in \cite{Zha14}, define and study the Weyl modules in the spirit of Chari-Pressley for a quantum analogue in the loop case for $\g = \mathfrak{sl}(m, n)$. In \cite{CLS19}, Calixto, Lemay and Savage  study Weyl modules for Lie superalgebras of the form $\g\otimes_{\C} A$, where A is an associative commutative unital $\C$-algebra and $\g$ is a classical Lie superalgebra or $\mathfrak{sl}(n, n), n \geq 2$. Particularly, they define Weyl modules (global and local) for the Lie superalgebras $\g\otimes_{\C} A$ and prove that global Weyl modules are universal highest weight objects in a certain category and local Weyl modules are finite dimensional. Furthermore recently, Bagci, Calixto and Macedo \cite{BCM19} study Weyl modules (global and local) and Weyl functors for the superalgebras $\g\otimes A$, where $\g$ is either $\mathfrak{sl}(n, n),\ n\geq 2$, or any finite dimensional simple Lie superalgebra not of type $\q(n)$, and $A$ is an associative, commutative algebra with unit.

The goal of this paper is to study global and local Weyl modules for Lie superalgebras $\g\otimes_{\C} A$, where A is an associative commutative unital $\C$-algebra and $\g$ is the {\em queer Lie superalgebra}. To prove our results, we follow \cite{CLS19}.
%

%
\section{Preliminaries}\label{sec:prelim}
%

Throughout the paper ground field  will be the field of complex numbers $\C$. By $\Z_{\geq 0}$ and $\Z_{>0}$ we denote the nonnegative integers and strictly positive integers, respectively. Also we set $\Z_2 = \Z/2\Z$. All supervectorspaces, superalgebras, tensor products etc. are  defined over  $\C$. In this section, we review some facts about associative commutative algebras and queer Lie superalgebras that we need in the sequel.  

\subsection{Basic definitions}


A vector space $V$ is called a {\em supervectorspace} if $V$ is $\Z_2$-graded, i.e.,  $V= V_{\bar{0}} \oplus V_{\bar{1}}$. The dimension of the vector space $V$ is the tuple $(\dim V_{\bar{0}} \mid \dim V_{\bar{1}})$. The parity of a homogeneous element $v \in V_i$ is denoted by $|v| = i, i \in \Z_2$. An element in $V_{\bar{0}}$ is called even, while an element in $V_{\bar{1}}$ is called odd. A {\em subspace} of $V$ is a $\Z_2$-graded vector space $W = W_{\bar{0}} \oplus W_{\bar{1}} \subseteq V$ with compatible $\Z_2$-gration, i.e., $W_{ i} \subseteq V_{ i}$, for $i \in \Z_2$.  We denote by $\C^{m\mid n}$ the supervectorspace $\C^{m}\oplus \C^n$, where the first summand is even and the second summand is odd. 

Given two supervectorspaces $V$ and $W$, a linear mapping $T: V \longrightarrow W$ is {\em homogeneous of degree} $d\in \Z_2$ if $T(V)_{ i} \subset W_{i +{d}}$ for $i \in \Z_2$. The map $T$ is called even (respectively, odd) if ${d} = \bar{0}$ (respectively, ${d} = \bar{1}$). Consider the vector space of all linear transformations from $V$ to $W$ denoted as $\Hom(V, W)$ is $\Z_2$-graded with
\[\Hom(V, W)_d = \{T: V \longrightarrow W\mid T \;\mbox{is homogeneous of degree}\; d\},\]
where $d\in \Z_2$ . Define $ \End(V) := \Hom(V, V)$. The supervectorspaces and homogeneous mappings define a category. If we restrict the mappings to homogeneous even mappings we obtain an abelian category say {\em Vec}.
We denote by $\Pi$ the {\em parity change functor}, on category {\em Vec} which is defined as 
\[\Pi (V) = \Pi (V)_{\bar{0}} \oplus \Pi (V)_{\bar{1}}, \quad \Pi (V)_{i}= V_{i + \bar{1}}, \;\;   i \in \Z_2 \] and  $\Pi f=f$ for  $V \in  \mbox{Vec}$ and  $f: V \longrightarrow W \in  \mbox{Vec}$. For example consider if $V$ is of dimension $(1 \mid 0)$ then $\Pi V$ has dimension $(0 \mid 1)$. We assume the field $\mathbb{C}$ is of homogeneous even dimension.

A supervectorspace $ \mathcal{A}= A_{\bar{0}} \oplus A_{\bar{1}}$, equipped with a bilinear associative multiplication satisfying $A_{i}A_{j}\subseteq A_{i+j},$ for $i, j\in \Z_2$ is called a $\Z_2$-graded associative algebra or, {\em associative superalgebra}. For instance $\End(V)$ is an associative superalgebra. A homomorphism between two superalgebras $\mathcal{A}$ and $\mathcal{B}$ i.e., $f: \mathcal{A} \longrightarrow \mathcal{B}$, is a even linear map($f(\mathcal{A}_{i}) \subseteq \mathcal{B}_{i}$ for $ i \in \mathbb{Z}_{2}$) with $f(ab)=f(a)f(b)$. The tensor product $\mathcal{A}\otimes \mathcal{B}$ is a superalgebra, with underlying vector space is the tensor product of supervectorspaces of $\mathcal{A}$ and $\mathcal{B}$, with the induced $\Z_2$-grading and multiplication  is given by $(a_1 \otimes b_1) (a_2 \otimes b_2) = (-1)^{|a_2||b_1|}a_1a_2 \otimes b_1b_2$ for homogeneous elements $a_i\in \mathcal{A}$ and $b_i\in \mathcal{B}$. A {\em module} $M$ over a superalgebra $\mathcal{A}$ is always understood in the  $\Z_2$-graded sense, that is $M= M_{\bar{0}} \oplus M_{\bar{1}}$ such that $A_iM_j \subseteq M_{i+j}$, for $i, j \in \Z_2$. Subalgebras and ideals of superalgebras are $\Z_2$-graded subalgebras and ideals. A superalgebra that has no non-trivial ideal is called {\em simple}. A homomorphism between $\mathcal{A}$-modules $M$ and $N$ is an even linear map $f: M\longrightarrow N$( i.e., $f(M_{i}) \subseteq N_{i}$ for $i \in \mathbb{Z}_{2}$), with $f(am) = a f(m)$, for all $a \in \mathcal{A}, m \in M$.

\subsection{Lie superalgebras}

\begin{defn}[Lie superalgebra]
  A \emph{Lie superalgebra} is a $\Z_2$-graded vector space $\g=\g_{\bar 0}\oplus \g_{\bar 1}$ with a bilinear multiplication $[\cdot,\cdot]$ satisfying the following axioms:
  \begin{enumerate}
    \item The multiplication respects the grading: $[\g_i,\g_j] \subseteq \g_{i+j}$ for all $i,j \in \Z_2$.
    \item Skew-supersymmetry: $[a,b]=-(-1)^{|a||b|}[b,a]$, for all homogeneous elements $a,b\in \g$.
    \item Super Jacobi Identity: $[a,[b,c]]=[[a,b],c]+(-1)^{|a||b|}[b,[a,c]]$, for all homogeneous elements $a,b,c\in \g$.
  \end{enumerate}
\end{defn}
\begin{ex}
Let A be any associative superalgebra. Then we can make $A$ into a Lie superalgebra by defining $[a, b]:=a b -(-1)^{|a||b|}b a$ for all homogeneous elements $a, b \in A$ and extending $[., .]$ by linearity. We call this is the Lie superalgebra associated with $A$. A concrete example is the general linear Lie superalgebra $\mathfrak{g l}(V)$ associated with associative superalgebra $End(V)$ of all linear operators on a $Z_{2}$-graded vectorspace $V$.
\end{ex}
A homomorphism $\rho$ between  Lie superalgebras is a map which preserves the structure in them. Precisely $\rho: \g \longrightarrow \g_{1}$ is an even linear map with $\rho([x, y])=[\rho x, \rho y]$ for all $x, y \in \g$.
\begin{defn}
A representaion of Lie superalgebra $\g$ is a Lie superalgebra homomorphism $\rho: \g \longrightarrow \mathfrak{gl}(V)$,i.e., $\rho$ is an even linear with $ \rho[x, y]=\rho(x) \rho(y)- (-1)^{ |x| |y|} \rho(y) \rho(x) $.
\end{defn}
 Alternatively $V$ is called $\g$-module and $V$ is irreducible if there are no submodule other than $0$ and $V$ itself.
\begin{lemma}\cite{Sav14}
Suppose $\g$ is a Lie superalgebra and $V$ is an irreducible $\g$-module such that $Iv=0$ for some ideal $I$ of $\g$ and non-zero vector $v\in V$. Then $IV=0$.
\end{lemma}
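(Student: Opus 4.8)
The plan is to look at the \emph{$I$-annihilator} inside $V$, namely
\[
W \;=\; \{\, w \in V : x\cdot w = 0 \text{ for all } x \in I \,\},
\]
and to prove that $W$ is a nonzero $\g$-submodule of $V$. Once this is done, irreducibility of $V$ forces $W = V$, which is precisely the statement $IV = 0$. So the whole argument reduces to three short verifications: that $W$ is $\Z_2$-graded, that it contains $v$ (hence is nonzero), and that it is stable under the $\g$-action.

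First I would check that $W$ is a genuine subobject in the super sense, i.e.\ a $\Z_2$-graded subspace. If $w = w_{\bar 0} + w_{\bar 1} \in W$ with $w_i \in V_i$, then for a homogeneous $x \in I$ the elements $x\cdot w_{\bar 0}$ and $x\cdot w_{\bar 1}$ lie in $V_{|x|}$ and $V_{|x|+\bar 1}$ respectively; since their sum $x\cdot w$ is $0$ and the two graded pieces intersect trivially, both must vanish. Hence $w_{\bar 0}, w_{\bar 1} \in W$, so $W$ is graded. In particular, applying this to $v$ shows each homogeneous component of $v$ lies in $W$; as $v \neq 0$, we conclude $W \neq 0$.

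Next comes the key computation, that $W$ is a $\g$-submodule. Take homogeneous $x \in I$, $y \in \g$, and $w \in W$. The module axiom (the ``super Leibniz rule'' coming from the representation property) gives
\[
x\cdot(y\cdot w) \;=\; [x,y]\cdot w \;+\; (-1)^{|x||y|}\, y\cdot(x\cdot w).
\]
The second term on the right is zero because $w \in W$; the first term is zero because $I$ is an ideal, so $[x,y] \in I$. Thus $x\cdot(y\cdot w) = 0$ for all homogeneous $x \in I$, meaning $y\cdot w \in W$. Extending by linearity over $y$ and using that $W$ is graded, we get $\g\cdot W \subseteq W$.

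Finally, $W$ is a nonzero $\g$-submodule of the irreducible module $V$, so $W = V$, i.e.\ $I$ annihilates all of $V$, as claimed. I do not expect any serious obstacle here; the only point demanding a little care is the very first step, ensuring $W$ is a subobject in the $\Z_2$-graded category and not merely a plain subspace, which is handled by the parity bookkeeping above.
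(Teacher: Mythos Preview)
Your proof is correct. The paper itself does not supply a proof of this lemma; it merely quotes it with a citation to \cite{Sav14}. Your argument---forming the $I$-annihilator $W$, checking it is $\Z_2$-graded, nonzero, and $\g$-stable, then invoking irreducibility---is the standard one and is exactly what one finds in the cited reference.
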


Given a Lie superalgebra $\g$, we will denote by $\mathbf{U}(\g)$ its {\em universal enveloping superalgebra}.  The universal enveloping superalgebra $\mathbf{U}(\g)$ is constructed from the tensor algebra $T(\g)$ by factoring out  the ideal generated by the elements $[u, v] - u\otimes v + (-1)^{|u||v|} v\otimes u$, for homgeneous elements $u, v$ in $\g$. Now we state an analogous of PBW Theorem in super setting, which ensures that $\g \mapsto \mathbf{U}(\g)$ is an inclusion by precisely giving a basis for $\mathbf{U}(\g)$.
\begin{lemma}[\cite{Mus12}, Theorem 6.1.1]\label{lem3}
Let $\g = \g_{\bar0} \oplus \g_{\bar1}$ be a Lie superalgebra. If $x_1, \ldots, x_m$ be a basis of $\g_{\bar0}$ and $y_1, \ldots, y_n$ be a basis of $\g_{\bar1}$, then the monomials
\[x_1^{a_1}\cdots x_m^{a_m} y_1^{b_1}\cdots y_n^{b_n}, \quad a_1, \ldots, a_m \geq 0, \quad \mbox{and}\quad b_1, \ldots, b_n \in \{0, 1\},\]
form a basis of $\mathbf{U}(\g)$. In particular, if $\g$ is finite dimensional and $\g_{\bar 0} = 0$, then $\mathbf{U}(\g)$ is finite dimensional.
\end{lemma}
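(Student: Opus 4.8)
The assertion is the Poincar\'e--Birkhoff--Witt (PBW) theorem for Lie superalgebras, and the plan is to follow the classical filtered/graded strategy adapted to the $\Z_2$-graded setting. First I would equip $\mathbf{U}(\g)$ with its canonical filtration $\mathbf{U}_0 \subseteq \mathbf{U}_1 \subseteq \cdots$, where $\mathbf{U}_k$ is the image of $\bigoplus_{j \le k} \g^{\otimes j}$ under the surjection $T(\g) \twoheadrightarrow \mathbf{U}(\g)$. The defining relations give $u\otimes v \equiv (-1)^{|u||v|} v\otimes u$ modulo lower degree, and for odd $y$ the special case $y\otimes y \equiv \tfrac12[y,y]$, an element of $\g_{\bar 0}$ of strictly smaller filtered degree. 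Hence the associated graded algebra $\mathrm{gr}\,\mathbf{U}(\g)$ is supercommutative and generated in degree one, so there is a surjection of superalgebras $S(\g_{\bar 0}) \otimes \Lambda(\g_{\bar 1}) \twoheadrightarrow \mathrm{gr}\,\mathbf{U}(\g)$, the exterior algebra on the odd part reflecting that $y^2$ has lower degree. The images of the monomials $x_1^{a_1}\cdots x_m^{a_m} y_1^{b_1}\cdots y_n^{b_n}$, $a_i \ge 0$, $b_j \in \{0,1\}$, span the left-hand side, so it suffices to prove that (i) these monomials span $\mathbf{U}(\g)$ and (ii) they are linearly independent.

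For (i) I would run a straightening argument: starting from an arbitrary product of basis elements of $\g$, repeatedly apply $uv = (-1)^{|u||v|} vu + [u,v]$ to move factors into the prescribed order, and whenever a repeated odd factor $y_i y_i$ occurs rewrite it as $\tfrac12[y_i,y_i]$ expanded in the basis $x_1,\dots,x_m$ of $\g_{\bar 0}$. Each such step either strictly lowers the filtered degree or strictly decreases the number of out-of-order pairs, so a double induction (first on filtered degree, then on the number of inversions) terminates and yields spanning.

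For (ii) --- the technical heart --- I would construct an explicit $\g$-module. Let $M$ be the vector space with basis the formal ordered monomials $x^{\mathbf a} y^{\mathbf b} := x_1^{a_1}\cdots x_m^{a_m} y_1^{b_1}\cdots y_n^{b_n}$, graded by $|x^{\mathbf a} y^{\mathbf b}| = \sum_j b_j \bmod 2$. For each basis element $z$ of $\g$ I would define $z \cdot (x^{\mathbf a} y^{\mathbf b})$ by recursion on the length of the monomial and on the position $z$ must occupy in the fixed order: if $z$ precedes every factor appearing, the result is the longer ordered monomial $z\,x^{\mathbf a} y^{\mathbf b}$, with the substitution of $\tfrac12[y_i,y_i]$ for $y_i y_i$ when $z = y_i$ and $b_i = 1$; otherwise one moves $z$ one step to the right at the cost of a bracket term and applies the recursion. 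One then checks that this extends to an action of $\g$, that is, $z_1\cdot(z_2\cdot v) - (-1)^{|z_1||z_2|} z_2\cdot(z_1\cdot v) = [z_1,z_2]\cdot v$ for all $v \in M$, by induction on the data feeding the recursion, reducing ultimately to the super Jacobi identity in $\g$. Granting this, $M$ becomes a $\mathbf{U}(\g)$-module, and since the action of an ordered monomial $u$ on the empty monomial $1 \in M$ returns the corresponding basis vector of $M$, distinct ordered monomials act on $1$ by linearly independent vectors; hence they are linearly independent in $\mathbf{U}(\g)$. Combining (i) and (ii) gives the asserted basis. Finally, if $\g$ is finite dimensional with $\g_{\bar 0} = 0$ then $m = 0$, the basis reduces to $\{ y_1^{b_1}\cdots y_n^{b_n} : b_j \in \{0,1\} \}$, which has $2^n$ elements, so $\mathbf{U}(\g)$ is finite dimensional.

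I expect the main obstacle to be the verification in step (ii) that the recursively defined operators on $M$ satisfy the bracket relation: all the sign bookkeeping is concentrated there, and it is exactly the point where axiom (3) of a Lie superalgebra (the super Jacobi identity) is invoked. The filtered/graded reduction is purely formal, and the spanning step is a terminating rewriting procedure, so neither presents any difficulty.
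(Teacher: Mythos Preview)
The paper does not prove this lemma; it is stated with a citation to \cite{Mus12}, Theorem~6.1.1, and used as a black box. Your outline is the standard filtered/graded argument for the super PBW theorem (spanning via straightening, linear independence via an explicit module built on ordered monomials), and it is correct as a sketch; it is essentially the approach in the cited reference. There is nothing to compare against in the present paper, so your proposal simply supplies what the authors chose to import.
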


\subsection{The queer Lie superalgebra}

Let $V= V_{\bar{0}} \oplus V_{\bar{1}}$ be a supervectorspace with $\dim V_{\bar 0} = \dim V_{\bar 1}$. Choose $P\in \mbox{End}(V)_{\bar 1}$ such that $P^2 =- 1$.  The subspace
\[{\q}(V) =\{T \in \mbox{End}(V) \mid [T, P]=0\}\]
is a subalgebra of $\mathfrak{gl}(V )$ called the, queer Lie superalgebra. If $V = \C^{n\mid n}$, then with a homogeneous basis we identify $\mathfrak{gl}(V)$ with $\mathfrak {gl}(n \mid n)$.

Now for explicit realization of the queer Lie superalgebra $\q(n)$ in terms of matrices, set 
\begin{equation}\label{eq2}
P:=   \left(\begin{array}{@{}c|c@{}}
    0 & I_n \\ \hline
    -I_n & 0
  \end{array}\right).
  \end{equation}
Then, for $X \in \mathfrak{gl}(n \mid n)$, we have $X \in \q(n)$ if and only if
$XP-(-1)^{|X|}PX = 0$ holds. Hence $\q(n)$ consisting of matrices of the form 
  \begin{equation}\label{eq3}
  \left(\begin{array}{@{}c|c@{}}
    A & B \\ \hline
    B & A
  \end{array}\right)
  \end{equation} 
  where $A$ and $B$ arbitrary  $n\times n$ matrices with \begin{equation}
\q(n)_{\bar 0} = \left\{  \left(\begin{array}{@{}c|c@{}}
    A & 0 \\ \hline
    0 & A
  \end{array}\right)\mid A \in \gl(n)\right\} \quad
   \q(n)_{\bar 1} = \left\{  \left(\begin{array}{@{}c|c@{}}
    0 & B \\ \hline
    B & 0
  \end{array}\right)  \mid  B \in \gl(n)\right\}.
\end{equation}
 From now on we denote $\q(n)=: \q$.


 A subalgebra of $\q$ is called a Cartan subalgebra if it is a self-normalizing nilpotent subalgebra. Every such subalgebra has a non-trivial odd part. 
Denote by $N^{-}, H, N^{+}$ respectively the strictly lower triangular, diagonal and strictly upper triangular matrices in $\mathfrak{gl}(n)$. Then we define
\begin{equation}\label{eq5}
\h_{\bar 0} = \left\{  \left(\begin{array}{@{}c|c@{}}
    A & 0 \\ \hline
    0 & A
  \end{array}\right)\mid A \in H\right\} \quad
   \h_{\bar 1} = \left\{  \left(\begin{array}{@{}c|c@{}}
    0 & B \\ \hline
    B & 0
  \end{array}\right)  \mid  B \in H\right\},
\end{equation}

\begin{equation}\label{eq6}
\n_{\bar 0}^{\pm} = \left\{  \left(\begin{array}{@{}c|c@{}}
    A & 0 \\ \hline
    0 & A
  \end{array}\right)\mid A \in N^{\pm}\right\} \quad
   \n_{\bar 1}^{\pm} = \left\{  \left(\begin{array}{@{}c|c@{}}
    0 & B \\ \hline
    B & 0
  \end{array}\right)\mid B \in N^{\pm}\right\},
\end{equation}
\begin{equation}\label{eq7}
\h = \h_{\bar 0}\oplus \h_{\bar 1}\quad \mbox{and}\quad \n^{\pm} = \n_{\bar 0}^{\pm} \oplus \n_{\bar 1}^{\pm}.
\end{equation}

\begin{lemma}[\cite{Mus12}, Lemma 2.4.1]\label{lem4}
We have a vector space decomposition
\[\q = \n^{-}\oplus \h \oplus \n^{+}\]
such that $\n^{-}, \n^{+}, \h$ are graded subalgebra of $\q$ with $\n^{\pm}$ nilpotent. The subalgebra $\h$ is called the {\em standard Cartan subalgebra} of $\q$. 
\end{lemma}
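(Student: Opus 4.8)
The plan is to deduce everything from the explicit matrix model \eqref{eq3}--\eqref{eq7} together with the classical triangular decomposition of $\gl(n)$. Recall that as a vector space $\gl(n)=N^{-}\oplus H\oplus N^{+}$, since every $n\times n$ matrix is uniquely the sum of its strictly lower triangular, diagonal, and strictly upper triangular parts, and that $N^{+},N^{-}$ are nilpotent subalgebras of $\gl(n)$ while $H$ is abelian. By \eqref{eq3} an element of $\q$ is a block matrix $\left(\begin{smallmatrix} A & B \\ B & A\end{smallmatrix}\right)$, its even part corresponding to $B=0$ and its odd part to $A=0$; under this description $\n^{\pm}$ is exactly the set of such matrices with $A,B\in N^{\pm}$ and $\h$ the set with $A,B\in H$ (cf.\ \eqref{eq5}, \eqref{eq6}, \eqref{eq7}). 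Applying the decomposition $\gl(n)=N^{-}\oplus H\oplus N^{+}$ to the $A$- and $B$-slots, and to each parity separately, immediately yields the direct sum decomposition $\q=\n^{-}\oplus\h\oplus\n^{+}$ of vector spaces and exhibits $\n^{\pm}=\n^{\pm}_{\bar 0}\oplus\n^{\pm}_{\bar 1}$ and $\h=\h_{\bar 0}\oplus\h_{\bar 1}$ as $\Z_2$-graded subspaces.

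Next I would check closure under the bracket. The bracket on $\q$ is the supercommutator $[X,X']=XX'-(-1)^{|X||X'|}X'X$ inherited from $\gl(n\mid n)$, so it suffices to understand ordinary matrix products: a direct computation shows that the product of $\left(\begin{smallmatrix} A & B \\ B & A\end{smallmatrix}\right)$ and $\left(\begin{smallmatrix} A' & B' \\ B' & A'\end{smallmatrix}\right)$ is again of block form $\left(\begin{smallmatrix} C & D \\ D & C\end{smallmatrix}\right)$ with $C=AA'+BB'$ and $D=AB'+BA'$. When $A,B,A',B'$ all lie in $N^{+}$ (resp.\ all in $N^{-}$, resp.\ all in $H$), so do $C$ and $D$, because $N^{\pm}$ and $H$ are closed under matrix multiplication; hence $XX'$, $X'X$, and therefore $[X,X']$, lie in $\n^{+}$ (resp.\ $\n^{-}$, resp.\ $\h$). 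Running through the four parity cases for $X,X'$ shows moreover that the bracket respects the $\Z_2$-grading --- for instance $[\n^{+}_{\bar 1},\n^{+}_{\bar 1}]\subseteq\n^{+}_{\bar 0}$, since a product $\left(\begin{smallmatrix} 0 & B \\ B & 0\end{smallmatrix}\right)\left(\begin{smallmatrix} 0 & B' \\ B' & 0\end{smallmatrix}\right)$ is even --- so $\n^{+},\n^{-},\h$ are graded subalgebras. The same computation gives $[\h_{\bar 0},\h]=0$ and $[\h_{\bar 1},\h_{\bar 1}]\subseteq\h_{\bar 0}$, whence $[\h,[\h,\h]]=0$; thus $\h$ is $2$-step nilpotent with a nontrivial odd part.

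Finally, for the nilpotency of $\n^{\pm}$ I would show that the associative subalgebra of $\gl(n\mid n)$ generated by $\n^{+}$ is nilpotent. By the block-product formula above, every entry of a product of $n$ elements of $\n^{+}$ is a sum of products of $n$ matrices drawn from $N^{+}$; since $(N^{+})^{n}=0$ (a product of $n$ strictly upper triangular $n\times n$ matrices annihilates the standard flag $0\subset\vspan(e_1)\subset\dots\subset\C^{n}$), any product of $n$ elements of $\n^{+}$ vanishes. Every iterated bracket of length $n$ in $\n^{+}$ expands into a signed sum of length-$n$ associative products of its entries, hence vanishes as well, so the lower central series of $\n^{+}$ terminates and $\n^{+}$ is a nilpotent Lie superalgebra; the argument for $\n^{-}$ is identical with $N^{-}$ in place of $N^{+}$. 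The only point requiring care here is that ``nilpotent'' must be read as termination of the lower central series of the super-bracket, not as anything grading-dependent; the associative-nilpotence/flag argument handles this uniformly, because it controls the products in $\End(\C^{n\mid n})$ out of which all iterated super-brackets are built. Everything else is a routine unwinding of \eqref{eq3}--\eqref{eq7}, which is why the statement is attributed to \cite{Mus12} (the remaining claim, that $\h$ is self-normalizing and so merits the name standard Cartan subalgebra, can be read off the same matrix picture).
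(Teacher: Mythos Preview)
Your argument is correct and is precisely the direct matrix verification one would expect. Note, however, that the paper does not supply its own proof of this lemma: it is stated with a citation to \cite[Lemma~2.4.1]{Mus12} and used as input. Your block-product computation and flag/associative-nilpotence argument for $\n^{\pm}$ are exactly the kind of routine check underlying that reference, so there is nothing to compare beyond saying your write-up fills in what the paper leaves to citation.
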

Given any Lie superalgebra $\mathfrak{a}$, the map 
$x\longrightarrow x \otimes 1 \oplus 1 \otimes x, x \in \mathfrak{a}$ extends to an algebra homomorphism $\mathbf{U}(\mathfrak{a}) \longrightarrow \mathbf{U}(\mathfrak{a})  \otimes \mathbf{U}(\mathfrak{a})$.  By the PBW Theorem (see Lemma \ref{lem3}), we know that if $\mathfrak{b}$ and $\mathfrak{c}$ are subalgebras of $\mathfrak{a}$ such that $\mathfrak{a} = \mathfrak{b}\oplus \mathfrak{c}$ as vector spaces 
\[\mathbf{U}(\mathfrak{a})\cong \mathbf{U}(\mathfrak{b})\otimes \mathbf{U}(\mathfrak{c}).\]
Thus, from Lemma \ref{lem4}, we obtain the triangular decomposition of $\mathbf{U}(\q)$:
\begin{equation}\label{eq8}
\mathbf{U}(\q)\cong \mathbf{U}(\n^-)\otimes \mathbf{U}(\h) \otimes \mathbf{U}(\n^+).
\end{equation}

\subsection{Root system for $\q$.}
We fix $\h = \h_{\bar 0}\oplus \h_{\bar 1}$ to be standard Cartan subalgebra of $q$, is given by 
\[\h_{\bar 0} = \C k_1\oplus \cdots \oplus \C k_{n} \quad \mbox{and}\quad \h_{\bar 1} = \C k_1'\oplus \cdots \oplus \C k_{n}'\]
where
\[k_i =  \left(\begin{array}{@{}c|c@{}}
     E_{i,i} & 0 \\ \hline
    0 & E_{i,i} 
  \end{array}\right)\quad \mbox{and}\quad k_i' = \left(\begin{array}{@{}c|c@{}}
     0 &E_{i,i}  \\ \hline
     E_{i,i} &0 
  \end{array}\right)\]
and $E_{i, j}$ is the $n\times n$ matrix having $1$ at the $(i, j)$-entry and $0$ elsewhere. The Cartan subalgebra $\h$ has a nontrivial odd part $\h_{\bar 1}$ and hence is not abelian, as $[\h_{\bar 0}, \h] = 0$ and $[\h_{\bar 1}, \h_{\bar 1}] = \h_{\bar 0}$. Note that all Cartan subalgebra of $\q$ are conjugate to $\h$. For $1\leq i \neq j \leq n$, we set 
\[e_{i, j} = \left(\begin{array}{@{}c|c@{}}
     E_{i,j} & 0 \\ \hline
    0 & E_{i,j} 
  \end{array}\right)\quad \mbox{and}\quad e_{i, j}' = \left(\begin{array}{@{}c|c@{}}
     0 &E_{i,j}  \\ \hline
     E_{i,j} &0 
  \end{array}\right).\]
  The set $\{e_{i, j}, e_{i,j}^{'} \mid 1 \leq i, j \leq n\}$ is a homogeneous linear basis for $\q$. The even subalgebra is $\q_{\bar{0}}$ is spanned by $\{e_{i, j}  \mid 1 \leq i, j \leq n\}$ and hence is isomorphic to the general linear Lie algebra $ \mathfrak{gl}(n)$ and odd space $\q_{\bar{1}}$ is is isomorphic to the adjoint module.
  Let $\{\epsilon_1, \ldots, \epsilon_{n}\}$ be the basis of $\h_{\bar 0}^{*}$
 dual to $\{k_{1}, \cdots, k_{n}\}$ defined as $\epsilon_{i}( \left(\begin{array}{@{}c|c@{}}
     h & 0 \\ \hline
    0 & h
  \end{array}\right))=a_{i}$, for any diagonal matrix $h$ with diagonal entries $(a_{1}, a_{2}, \cdots, a_{n})$. We denote $h_{i}:=k_{i}-k_{i+1}$ for $1 \leq i \leq n-1$. Given $\alpha \in \h_{\bar 0}^{*}$, let 
  \[\q_{\alpha} = \{x\in \q\mid [h, x] = \alpha(h) x\;\; \mbox{for all}\; h \in \h_{\bar 0}\}.\]
 Note that $\q_0 = \h$. We call $\alpha \neq 0$ a root if $\q_{\alpha} \neq 0$. The set $\Phi=\{\alpha| q_{\alpha} \neq 0\}$ is called the root system of $\q$.  A root $\alpha$ is called even root if $\q_{\alpha}\cap \q_{\bar{0}}\neq0$ and it is called odd if $\q_{\alpha}\cap \q_{\bar{1}}\neq 0$. The root system  $\Phi=\Phi_{\bar{0}} \cup \Phi_{\bar{1}}$ of $\q$ has identical even and odd parts where $\Phi_{\bar{0}}$ denote the set of even roots and $\Phi_{\bar{1}}$ denote the set of odd roots. Namely $\Phi_{\bar{0}} = \Phi_{\bar{1}}=\{ \epsilon_{i} - \epsilon_{j} \mid 1 < i\neq j<n\}$.  For  each root $\alpha = \epsilon_i -\epsilon_{j},\; 1\leq i \neq j \leq n$, we have  root spaces has dimension $(1\mid 1)$, 
 \[\q_{\alpha} = \C e_{i,j} \oplus \C e_{i,j}'.\]  
 and 
 \[\q =  \bigoplus_{\alpha \in \h_{\bar 0}^{*}} \q_{\alpha}.\] is the root space decomposition of $\q$. 
 \smallskip

A root $\alpha$ is called {\em positive} (resp. {\em negative}) if $\q_{\alpha}\cap \n^+ \neq 0$ (resp. $\q_{\alpha}\cap \n^- \neq 0$). We denote by $\Phi^{+}$ (resp. $\Phi^{-}$) the subset of positive (resp. negative) roots. Denote by $\Delta$ the set of simple roots. Thus, 
\[ \Phi^+ = \{\epsilon_i -\epsilon_j \mid 1\leq i< j\leq n\},\;\Phi^- = -\Phi^+,\; \Phi = \Phi^+ \cup \Phi^-, \;\Delta = \{\epsilon_i - \epsilon_{i+1}\mid 1\leq i \leq n-1\}.\]
Hence,
\[\n^+ = \bigoplus_{\alpha\in \Phi^+} \q_{\alpha}\quad \mbox{and}\quad \n^- = \bigoplus_{\alpha\in \Phi^-} \q_{\alpha}.\]A maximal solvable subalgebra of $\q$ is called Borel subalgebra $\bb$. Borel subalgebra of $\q$ is conjugate to the standard Borel subalgebra $\bb_{+} = \h \oplus \n^+$ of $\q$. 
Set $\alpha_{i}:= \epsilon_{i}-\epsilon_{i+1}$ and the root space $\q_{\alpha_{i}}$ is spanned by \[e_i :=  \left(\begin{array}{@{}c|c@{}}
     E_{i,i+1} & 0 \\ \hline
    0 & E_{i,i+1} 
  \end{array}\right)\quad \mbox{and}\quad e_i' = \left(\begin{array}{@{}c|c@{}}
     0 &E_{i,i+1}  \\ \hline
     E_{i,i+1} &0 
  \end{array}\right),\]
 while  $\q_{-\alpha_{i}}$ is spanned by \[f_i =  \left(\begin{array}{@{}c|c@{}}
     E_{i+1,i} & 0 \\ \hline
    0 & E_{i,i+1} 
  \end{array}\right)\quad \mbox{and}\quad f_i' = \left(\begin{array}{@{}c|c@{}}
     0 &E_{i+1,i}  \\ \hline
     E_{i+1,i} &0 
  \end{array}\right).\]

Hence $n^{+}$ is spanned by $e_{i}, e_{i}'$
and $n^{-}$ is spanned by $f_{i}, f_{i}'$ and the standard Borel is spanned by $e_{i}, e_{i}', h_{i}, k_{i}'$.
\\
For $\alpha=\epsilon_{i}-\epsilon_{j}\in  \Phi_{\bar 0}^{+}$, let $s_{\alpha}: \h_{\bar 0}^{*} \longrightarrow  \h_{\bar 0}^{*}$ be the corresponding reflection and is defined by \[s_{\epsilon_i-\epsilon_j}(\epsilon_i) = \epsilon_{j},\quad  s_{\epsilon_i-\epsilon_j}(\epsilon_k) = \epsilon_{k},\ \mbox{for}\; k \neq i, j.\] The Weyl group of $\q$ is the Weyl group $W$ of $\q_{\bar 0}$ generated by $s_{\alpha}$ where $\alpha\in \Phi_{\bar 0}^{+}$which is the symmetric group $\mathfrak{S}_{n}$ in $n$ letters.

Let $I := \{1, 2, \ldots, n-1\}$ and $J:= \{1, 2, \ldots, n\}$.
\begin{prop}\cite{GJKM10}
The Lie superalgebra $\q$ generated by the elements $e_{i}, e_{i}', f_{i}, f_{i}'$ for $i\in I$, $\h_{\bar{0}}$ and $k_{j}'$ for $j\in J$  with the following  defining relations
\begin{align*}
&[h, h']=0 ~~for h, h' \in \h_{\bar{0}},\\
&[h, e_{i}]=\alpha_{i}(h)e_{i}, [h, e_{i}']=\alpha_{i}(h)e_{i}'\;\, \quad \mbox{for $h\in \h_{\bar{0}},i \in I$},\\
&[h, f_{i}]=-\alpha_{i}(h)f_{i}, [h, f_{i}']=-\alpha_{i}(h)f_{i}'\;\; \quad \mbox{for $ h\in \h_{\bar{0}},i\in I$},\\
&[h, k_{l}']=0, \;\; \mbox{for $h \in \h_{\bar{0}}\,\; l \in J$} ,\\
&[e_{i}, f_{j}]= \delta_{i j}(k_{i}-k_{i+1}), [e_{i}, f_{j}']=\delta_{i j}(k_{i}'-k_{i+1}') \mbox{for $i, j\in I$},\\
&[e_{i}', f_{j}]=\delta_{i j}(k_{i}'-k_{i+1}'), [k_{l}', e_{i}]=\alpha_{i}(k_{l}) e_{i}'\mbox{ for $i, j\in I, l\in J$},\\
 &[k_{l}', f_{i}]=-\alpha_{i}(k_{l})f_{i}', [e_{i}', f_{j}']= \delta_{i j}(k_{i}+k_{i+1}), \mbox{for $i, j\in I, l\in J$},\\
  &[k_{l}',e_{i}']= \begin{cases} e_i& \mbox{if $l =i, i+1$}\\
  0& \mbox{otherwise}
  \end{cases}\;\;\; \mbox{for $ i\in I, j \in J$},\\ 
  &[k_{l}',f_{i}']= \begin{cases} f_i& \mbox{if $l =i, i+1$}\\
  0& \mbox{otherwise}
  \end{cases}\;\;\; \mbox{for $i\in I, j \in J$},\\ 
  &[e_{i},e_{j}']=[e_{i}', e_{j}']= [f_{i},f_{j}']=[f_{i}',f_{j}']=0\;\; \mbox{for $1 \leq i, j \leq n-1, |i-j| \neq1$}\\
  &[e_{i},e_{j}]=[f_{i},f_{j}]=0, \mbox{for $i, j \in I, |i-j| > 1$},\\
  &[e_{i},e_{i+1}]=[e_{i}',e_{i+1}'], [e_{i},e_{i+1}']=[e_{i}',e_{i+1}],\\
  &[f_{i+1},f_{i}]=[f_{i+1}',f_{i}'], [f_{i+1},f_{i}']=[f_{i+1}',f_{i}],\\
  &[k_{i}',k_{j}']=\delta_{ij}2k_i \;\;\;\mbox{for $ j \in J$}\\
  &[e_{i},[e_{i}, e_j]]=[e_{i}',[ e_{i}, e_j]]= 0\;\;  \mbox{for $ i, j\in I, |i-j| =1$}\\
&[f_{i},[f_{i}, f_j]]=[f_{i}',[f_{i}, f_j]]= 0\;\;  \mbox{for $ i, j\in I, |i-j| =1$}.\\
 \end{align*}
\end{prop}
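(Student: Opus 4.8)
The plan is to show that the evident homomorphism from the abstractly presented superalgebra to $\q$ is an isomorphism. Write $\widetilde{\q}$ for the Lie superalgebra on the listed generators (with $e_i,f_i$ and $\h_{\bar 0}$ even, and $e_i',f_i',k_j'$ odd) subject to the listed relations. First one checks, by a direct computation with the block matrices of \eqref{eq2}--\eqref{eq7} and the explicit $e_i,e_i',f_i,f_i',k_i,k_i'$, that every relation in the list is satisfied by the corresponding elements of $\q$ (for instance $[k_l',k_l']=2k_l$ since $(k_l')^2=k_l$, and $[e_i,e_{i+1}]=[e_i',e_{i+1}']$ since both equal the block-diagonal matrix with entries $E_{i,i+2}$); and that these elements generate $\q$, because $\q_{\bar 0}\cong\gl(n)$ is generated by $\{e_i,f_i\}$ together with $\h_{\bar 0}$, while $\q_{\bar 1}$, being isomorphic to $\q_{\bar 0}$ as an $\ad\q_{\bar 0}$-module, is obtained by bracketing $e_1'$ (which generates the $\fsl(n)$-summand) and $k_1'$ (which supplies the central direction) with $\q_{\bar 0}$. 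This produces a surjection $\pi\colon\widetilde{\q}\twoheadrightarrow\q$, so it remains only to prove $\dim\widetilde{\q}\le\dim\q=(n^2\mid n^2)$.

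For that, the plan is to establish a triangular decomposition of $\widetilde{\q}$. Let $\widetilde{\n}^{\pm}$ be the subalgebra of $\widetilde{\q}$ generated by $\{e_i,e_i'\mid i\in I\}$, resp. $\{f_i,f_i'\mid i\in I\}$, and $\widetilde{\h}=\vspan\{k_1,\dots,k_n,k_1',\dots,k_n'\}$. The relations $[h,h']=0$, $[h,k_l']=0$, $[k_l',k_j']=2\delta_{lj}k_l$ show $\widetilde{\h}$ is a subalgebra; the relations for $[h,e_i],[h,e_i'],[k_l',e_i],[k_l',e_i']$ and their $f$-counterparts show $\widetilde{\h}$ normalizes $\widetilde{\n}^{\pm}$; and the relations $[e_i,f_j]=\delta_{ij}h_i$, $[e_i,f_j']=[e_i',f_j]=\delta_{ij}(k_i'-k_{i+1}')$, $[e_i',f_j']=\delta_{ij}(k_i+k_{i+1})$ feed the standard double induction on bracket lengths showing $[\widetilde{\n}^+,\widetilde{\n}^-]\subseteq\widetilde{\n}^-+\widetilde{\h}+\widetilde{\n}^+$. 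Hence $\widetilde{\n}^-+\widetilde{\h}+\widetilde{\n}^+$ is a subalgebra containing all generators, so it is all of $\widetilde{\q}$, and $\dim\widetilde{\q}\le\dim\widetilde{\n}^-+\dim\widetilde{\h}+\dim\widetilde{\n}^+$. Since $\widetilde{\h}$ is spanned by $n$ even and $n$ odd elements, $\dim\widetilde{\h}\le(n\mid n)=\dim\h$, and $\pi$ restricts to surjections $\widetilde{\n}^{\pm}\twoheadrightarrow\n^{\pm}$ respecting the $\h_{\bar 0}$-weight grading; so the proof reduces to the inequality $\dim\widetilde{\n}^+\le\dim\n^+=\bigl(\binom{n}{2}\mid\binom{n}{2}\bigr)$, the case of $\widetilde{\n}^-$ being symmetric.

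To bound $\widetilde{\n}^+$, note it is graded by $Q^+:=\bigoplus_{i\in I}\Z_{\geq 0}\alpha_i$ via $\ad\h_{\bar 0}$, and $\pi$ carries the $\beta$-graded piece onto $\q_\beta$, which is $(1\mid1)$-dimensional for $\beta\in\Phi^+$ and zero otherwise; so it suffices to show $\dim(\widetilde{\n}^+)_\beta\le(1\mid1)$ for every $\beta$. For $1\le i<j\le n$ define $e_{i,j}=[e_i,[e_{i+1},[\cdots,[e_{j-2},e_{j-1}]\cdots]]]$ and $e_{i,j}'=[e_i,[e_{i+1},[\cdots,[e_{j-2},e_{j-1}']\cdots]]]$ in $\widetilde{\n}^+$ (so $e_{i,i+1}=e_i$, $e_{i,i+1}'=e_i'$). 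I would then prove, by induction on bracket length and using the Serre relations $[e_i,[e_i,e_j]]=[e_i',[e_i,e_j]]=0$ for $|i-j|=1$, the vanishing relations $[e_i,e_j]=[e_i,e_j']=[e_i',e_j']=0$ for $|i-j|>1$ (together with the listed vanishings for $|i-j|=1$), and the coincidences $[e_i,e_{i+1}]=[e_i',e_{i+1}']$ and $[e_i,e_{i+1}']=[e_i',e_{i+1}]$, that every iterated bracket of the generators $e_i,e_i'$ is a scalar multiple of some $e_{a,b}$ or $e_{a,b}'$, or is zero. This yields $(\widetilde{\n}^+)_\beta=0$ for $\beta\notin\Phi^+$ and $(\widetilde{\n}^+)_{\epsilon_i-\epsilon_j}\subseteq\vspan\{e_{i,j},e_{i,j}'\}$, i.e.\ $\dim(\widetilde{\n}^+)_\beta\le(1\mid1)$; summing over the $\binom{n}{2}$ positive roots finishes the count and forces $\pi$ to be an isomorphism. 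The main obstacle is precisely this last combinatorial reduction: one must track, through the two mixed coincidence relations, how the single "primed" entry propagates along a long bracket, so as to be sure no genuinely new weight vector arises — the rest running parallel to the classical Serre-presentation argument for $\gl(n)$.
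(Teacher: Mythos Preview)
The paper does not actually prove this proposition: it is simply quoted from \cite{GJKM10} without argument, so there is no ``paper's own proof'' to compare against.

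Your outline is the standard Serre--presentation strategy and is essentially correct. You verify the relations in $\q$ to get a surjection $\pi\colon\widetilde{\q}\twoheadrightarrow\q$, produce a triangular decomposition $\widetilde{\q}=\widetilde{\n}^-+\widetilde{\h}+\widetilde{\n}^+$ from the commutator relations via the usual double induction, and then bound $\dim\widetilde{\n}^+$ root--space by root--space using the Serre relations together with the two ``coincidence'' relations $[e_i,e_{i+1}]=[e_i',e_{i+1}']$ and $[e_i,e_{i+1}']=[e_i',e_{i+1}]$. The latter pair is exactly what lets an arbitrary bracket word in the $e_i,e_i'$ be rewritten so that at most one prime survives and it sits on the rightmost factor, which is the point you correctly single out as the main obstacle. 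One small caution: in your argument that the generators span $\q$, you say $\q_{\bar 1}$ is generated as an $\ad\q_{\bar 0}$-module by $e_1'$ and $k_1'$; this is fine, but note that you also need the $k_j'$ with $j>1$ in the generating set (as in the statement) to span $\widetilde{\h}_{\bar 1}$ inside $\widetilde{\q}$, since in the abstract algebra they are not obtained from $k_1'$ by bracketing with even elements. Otherwise the plan is sound, and what remains is exactly the bookkeeping induction you describe.
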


The simple roots $\Delta$ of $\q$ satisfy the following property:
\begin{equation}\label{eqsystem30}
\mbox{For all $\alpha \in \Delta_{\bar{1}}$, there exists $\alpha'\in \Phi_{\bar{1}}^+$ such that $\alpha+\alpha'\in \Phi$}.
\end{equation}
This is true, as every root of $\q$ is even as well as odd.

 
\subsection{Clifford Algebra.}

\begin{defn}[Clifford Algebra]
Let $V$ be a finite dimensional vector space and $f:V\times V\to \C$ be a symmetric bilinear form. We call the pair $(V, f)$ a quadratic pair. Let $I$ be the ideal of the tensor algebra $T(V)$ generated by the elements 
\[x\otimes x - f(x, x)1, \quad x\in V\] and set $\mbox{Cliff}(V, f) = T(V)/I$. The algebra $\mbox{Cliff}(V, f)$ is alled the Clifford algebra of the pair $(V, f)$ over $\C$.
\end{defn}
\begin{rmk}[\cite{Hus94}, Ch. 12, Def. 4.1 and Theorem 4.2]
For a quadratic pair $(V, f)$, there exists a linear map $\theta: V\to \mbox{Cliff}(V, f)$ such that the pair $(\mbox{Cliff}(V, f), \theta)$ has the following universal property: For all linear maps $\eta: V\to A$ such that $\eta(v)^2 = f(v, v)1_A$ for all $v\in V$, where $A$ is a unital algebra, there exists a unique algebra homomorphism $\eta': \mbox{Cliff}(V, f) \to A$ such that $\eta' \circ \theta = \eta$, in other words, we have the following commutative diagram. 
\begin{center}
$\xymatrix{
V \ar[rd]_{\eta} \ar[rr]^{\theta} & & \mbox{Cliff}(V, f) \ar[ld]^{\eta'} \\
& A &} $ 
\end{center}
\end{rmk}

Clifford algebra have a natural superalgebra structure. In fact, $T(V)$ possess a $\Z_2$-grading such that $I$ is homogeneous, so the grading descends to $\mbox{Cliff}(V, f)$. Thus resulting superalgbera $\mbox{Cliff}(V, f)$ is sometimes called the Clifford superalgebra. When $f$ is known from the context, we shall write $\mbox{Cliff}(V)$ instead of $\mbox{Cliff}(V, f)$.

 For $\lambda\in \h_{\bar 0}^{*}$, define an even super antisymmetric bilinear form $F_{\lambda}$ on $\h_{\bar 1}$, by setting $F_{\lambda}(u, v) = \lambda ([u, v])$ and denote $E_{\lambda} := \h_{\bar 1}/\ker F_{\lambda}$. Let $\mbox{Cliff}(\lambda)$ be the Clifford superalgebra with respect to quadratic pair $(E_{\lambda}, F_{\lambda})$ and $\mbox{Cliff}(\lambda)$  is endowed with a canonical $\Z_2$-grading. By definition we have an isomorphism of superalgebras 
 \begin{equation}\label{cliffeq29}
 \mbox{Cliff}(\lambda) \cong U(\h)/I_{\lambda},
 \end{equation} where $I_{\lambda}$ denoted the ideal of $U(\h)$ generated by $\ker F_{\lambda}$ and $a-\lambda(a)$ for $a\in  \h_{\bar 0}$. 
 
Let $\h_{\bar 1}'\subseteq \h_{\bar 1}$ be a maximal isotropic subspace with respect to $F_{\lambda}$ and define the Lie superalgebra $\h' := \h_{\bar 0}\oplus \h_{\bar 1}'$. Let  $\C v_{\lambda}$, be the one-dimensional $\h_{\bar 0}$-module defined by $hv_{\lambda} = \lambda(h)v_{\lambda}$ for all $h \in \h_{\bar{0}}$, extends to an $\h'$-module by setting $\h_{\bar 1}' v_{\lambda} = 0$. Then the induced module  
\[ \mbox{Ind}_{\h'}^{\h}\C v_{\lambda}=\h \otimes \C v_{\lambda}\] is an irreducible $\h$-module. If $\mbox{Ind}_{\h'}^{\h}\C v_{\lambda}$ is a finite dimensional irreducible $\h$-module, then $\mbox{Ind}_{\h'}^{\h}\C v_{\lambda}$ is a finite dimensional irreducible module over $\mbox{Cliff}(\lambda)$ via the pullback through \eqref{cliffeq29}.

We may consider $\mbox{Cliff}(\lambda)$ as the associative $\C$-algebra generated by the identity ${\bf 1} = 1+ I_{\lambda}$ and $t_{\bar{i}}:= k_{\overline{i}}+ I_{\lambda}$ satisfying the relations 
\begin{equation}
t_{\bar{i}}t_{\bar{j}}+ t_{\bar{j}}t_{\bar{i}} = 2\delta_{ij}, \quad i, j =1, 2, \ldots, n.
\end{equation}
Let $S = \oplus_{i=1}^n \C t_{\bar{i}}$ and $\lambda= (\lambda_1, \ldots, \lambda_n)\in \C^n$ and denote by $B_{\lambda}:S\times S\to \C$ the symmetric bilinear form defined by $B_{\lambda} (t_{\bar{i}},t_{\bar{j}}) = \delta_{ij}\lambda_i$. Let $\mbox{Cliff}_S(\lambda)$ be the unique up to isomorphism Clifford algebra associated to $S$ and $B_{\lambda}$. Now define $S(\lambda):= S/\ker B_{\lambda}$ and denote by $\beta_{\lambda}$ the restriction of $B_{\lambda}$ on $S(\lambda)$. Let $N_{\lambda} = \{i\mid \lambda_i\neq 0\}, Z_{\lambda}= \{j\mid \lambda_j=0\}$ and $\ell = \#N_{\lambda}$. Set 
\[\lambda_N:= (\lambda_{i_1}, \ldots, \lambda_{i_{\ell}}), 0_Z := (\lambda_{j_1}, \ldots, \lambda_{j_{n-\ell}})= (0, \ldots, 0).\]One can see that $\ker B_{\lambda} = \oplus_{j\in Z_{\lambda}}\C t_{\overline{j}}$ and $\mbox{Cliff}_S(\lambda_{N}) = \oplus_{i\in N_{\lambda}} \C t_{\overline{i}}$ is the Clifford algebra corresponding to $(S(\lambda), \beta_{\lambda})$. Further, 
\begin{equation}\label{cliffeq12}
\mbox{Cliff}_S(\lambda) \cong \mbox{Cliff}_S(\lambda_N) \otimes_{\C} \mbox{Cliff}_S(0_Z) \cong \mbox{Cliff}_S(\lambda_N) \otimes_{\C} \bigwedge \ker B_{\lambda}.
\end{equation}
Here $\bigwedge U$ denotes the exterior algebra of the vector space $U$. Thus by the isomorphisms in \eqref{cliffeq12}, every $\mbox{Cliff}_S(\lambda)$-module can be considered as a  $\mbox{Cliff}_S(\lambda_N)$-module under the embedding \[\mbox{Cliff}_S(\lambda_N)  = \mbox{Cliff}_S(\lambda_N) \otimes_{\C} 1 \to \mbox{Cliff}_S(\lambda_N) \otimes_{\C} \mbox{Cliff}_S(0_Z).\] Then one can easily prove the following lemma.
\begin{lemma}\label{clifflem1}
Let $M$ be an irreducible $\mbox{Cliff}_S(\lambda)$ module. Then  $M$ is an irreducible $\mbox{Cliff}_S(\lambda_{N})$-module and $t_{\overline{i}}v =0$ for every $i\in Z_{N}$.
\end{lemma}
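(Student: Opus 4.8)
The plan is to show that the two-sided ideal
\[
\mathcal{N} \;:=\; \sum_{j\in Z_\lambda}\mbox{Cliff}_S(\lambda)\,t_{\bar j}\,\mbox{Cliff}_S(\lambda)
\]
of $\mbox{Cliff}_S(\lambda)$ is nilpotent, and then to conclude everything by passing to the quotient. Using the isomorphism \eqref{cliffeq12}, identify $\mbox{Cliff}_S(\lambda)$ with $\mbox{Cliff}_S(\lambda_N)\otimes_\C\bigwedge\ker B_\lambda$; since $\ker B_\lambda=\bigoplus_{j\in Z_\lambda}\C t_{\bar j}$ and each $t_{\bar j}$ with $j\in Z_\lambda$ corresponds to $1\otimes t_{\bar j}$ (a degree-one element of the exterior factor), the ideal $\mathcal{N}$ corresponds to $\mbox{Cliff}_S(\lambda_N)\otimes_\C\big(\bigwedge\ker B_\lambda\big)^+$, where $(\,\cdot\,)^+$ denotes the augmentation ideal $\bigoplus_{k\geq 1}\bigwedge^k\ker B_\lambda$. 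Because $\ker B_\lambda$ is finite dimensional, $\big((\bigwedge\ker B_\lambda)^+\big)^r=\bigwedge^{\geq r}\ker B_\lambda=0$ for $r>\#Z_\lambda$, and since the multiplication on the superalgebra tensor product only introduces signs one gets $\mathcal{N}^r\subseteq \mbox{Cliff}_S(\lambda_N)\otimes_\C\big((\bigwedge\ker B_\lambda)^+\big)^r=0$. Thus $\mathcal{N}$ is a nilpotent two-sided ideal.

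Now apply the standard argument: $\mathcal{N}M$ is a $\mbox{Cliff}_S(\lambda)$-submodule of $M$, so by irreducibility either $\mathcal{N}M=0$ or $\mathcal{N}M=M$; in the latter case iteration gives $\mathcal{N}^rM=M$ for all $r$, contradicting nilpotency together with $M\neq 0$. Hence $\mathcal{N}M=0$, that is, $t_{\bar j}v=0$ for all $j\in Z_\lambda$ and all $v\in M$. Consequently the action of $\mbox{Cliff}_S(\lambda)$ on $M$ factors through
\[
\mbox{Cliff}_S(\lambda)/\mathcal{N}\;\cong\;\mbox{Cliff}_S(\lambda_N),
\]
the isomorphism again coming from \eqref{cliffeq12} (the quotient kills the exterior factor). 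Composing with the algebra embedding $\mbox{Cliff}_S(\lambda_N)=\mbox{Cliff}_S(\lambda_N)\otimes 1\hookrightarrow\mbox{Cliff}_S(\lambda)$ recovers the original action, so the $\mbox{Cliff}_S(\lambda_N)$-module structure on $M$ obtained via the embedding agrees with the one obtained from the quotient. Finally, a subspace of $M$ is a $\mbox{Cliff}_S(\lambda_N)$-submodule if and only if it is a $\mbox{Cliff}_S(\lambda)$-submodule, since $\mbox{Cliff}_S(\lambda)$ acts through $\mbox{Cliff}_S(\lambda_N)$; hence $M$ remains irreducible over $\mbox{Cliff}_S(\lambda_N)$, and $t_{\bar i}v=0$ for every $i\in Z_\lambda$, as claimed.

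The only step that needs a moment's care is the nilpotency of $\mathcal{N}$: one must check that an $r$-fold product of elements of $\mbox{Cliff}_S(\lambda_N)\otimes\big(\bigwedge\ker B_\lambda\big)^+$ still lies in $\mbox{Cliff}_S(\lambda_N)\otimes\big((\bigwedge\ker B_\lambda)^+\big)^r$ under the sign-twisted multiplication $(a_1\otimes b_1)(a_2\otimes b_2)=(-1)^{|a_2||b_1|}a_1a_2\otimes b_1b_2$. This is immediate, because the signs do not change which homogeneous component of the exterior factor a product lands in. Everything else is the textbook fact that a nilpotent ideal annihilates any irreducible module, together with the elementary correspondence of submodules under an algebra quotient, which is why the lemma can be asserted with "one can easily prove".
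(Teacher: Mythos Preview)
Your argument is correct. The paper does not actually supply a proof of this lemma; it merely asserts that ``one can easily prove'' it after recording the isomorphism \eqref{cliffeq12}. Your write-up is precisely the standard verification one expects here: identify the ideal generated by the degenerate generators $t_{\bar j}$, $j\in Z_\lambda$, with $\mbox{Cliff}_S(\lambda_N)\otimes(\bigwedge\ker B_\lambda)^+$, observe it is nilpotent because the exterior factor is, deduce that it kills any irreducible module, and conclude that the action factors through $\mbox{Cliff}_S(\lambda_N)$ so irreducibility is preserved. The check that the sign-twisted multiplication on the super tensor product does not disturb the filtration by exterior degree is the only place requiring a moment's thought, and you address it.

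One cosmetic point: the statement in the paper writes $Z_N$, but only $Z_\lambda$ is defined; you correctly read this as $Z_\lambda$.
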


\subsection{Highest weight modules over $\q$.}

From now on, for a superalgebra $\mathcal{A}$, an $\mathcal{A}$-module will be understood as an $\mathcal{A}$-supermodule. A $\q$-module $M$ is called a weight module if it admits a weight space decomposition 
\[M = \bigoplus_{\mu \in \h_{\bar 0}^{*}} M_{\mu}, \;\;\mbox{where}\;\; M_{\mu} = \{m \in M \mid hm = \mu(h)m \; \mbox{for all}\; h \in \h_{\bar 0}\}. \]
An element $\mu \in \h_{\bar 0}^{*}$ such that $M_{\mu} \neq 0$ is called a {\em weight} of $M$ and $M_{\mu}$ is called weight space. The set of all weights of $M$ is denoted by $\wt(M)$.
\begin{defn}
A weight module $M$ is called a {\em highest weight module}  with highest weight $\lambda\in \h_{\bar 0}^{*}$ if $M_{\lambda}$ is finite dimensional and satisfies the following conditions:
\begin{enumerate}
\item $M$ is generated by $M_{\lambda}$,
\item $e_{i} v = e_{i}' v =0$ for all $v\in M_{\lambda}, \; i \in I$.
\end{enumerate}
\end{defn}

\begin{defn}
Let $\Lambda_{\bar 0}^+$ and  $\Lambda^+$ be the set of $\gl (n)$-dominant integral weights and the set of $\q$-dominant integral weights respectively, given by
\begin{align*}
&\Lambda_{\bar 0}^+ := \{\lambda_1\epsilon_1+ \cdots+\lambda_n\epsilon_n \in \h_{\bar 0}^{*}\mid \lambda_i-\lambda_{i+1}\in \Z_{\geq 0} \;\; \mbox{for all}\;\; i \in I\}\\
&\Lambda^+ := \{\lambda_1\epsilon_1+ \cdots+\lambda_n\epsilon_n \in \Lambda_{\bar 0}^+ \mid \lambda_i= \lambda_{i+1}\implies \lambda_i=\lambda_{i+1}\ = 0\;\; \mbox{for all}\;\; i \in I\}.
\end{align*}
\end{defn}

 \begin{prop}[\cite{Pen86}, Prop. 1]\label{prop1}
 Let $\mathbf{v}$ be a finite dimensional simple $\bb_{+}$-module:
 \begin{enumerate}
\item The maximal nilpotent subalgebra $\n^{+}$ of $\bb_{+}$ acts on $\mathbf{v}$ trivially.
\item There exists a unique weight $\lambda \in \h_{\bar 0}^{*}$ such that $\mathbf{v}$ is endowed with a canonical left $\mbox{Cliff}(\lambda)$-module structure and $\lambda$ determines $\mathbf{v}$ up to the parity reversing functor $\Pi$.
\item  For all $h \in \h_{\bar 0}, v \in \mathbf{v}$, we have $hv = \lambda(h)v$.
\end{enumerate}
 \end{prop}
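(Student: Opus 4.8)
The plan is to strip $\mathbf{v}$ down in two stages, using the triangular decomposition $\bb_+=\h\oplus\n^+$ of Lemma~\ref{lem4}: first I would show that $\n^+$ acts trivially and that $\h_{\bar 0}$ acts by a single scalar weight $\lambda$, reducing $\mathbf{v}$ to a simple $\h$-module; then I would show that the residual $\h$-action factors through $\mbox{Cliff}(\lambda)$ and invoke the representation theory of Clifford superalgebras.

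For (a) and (c) I would argue as follows. Since $\mathbf{v}$ is finite dimensional, decompose it as $\mathbf{v}=\bigoplus_{\nu\in\h_{\bar 0}^{*}}\mathbf{v}_{\nu}$ into generalized weight spaces for the abelian algebra $\h_{\bar 0}$; since each root space $\q_{\alpha}$ carries $\mathbf{v}_{\nu}$ into $\mathbf{v}_{\nu+\alpha}$, the subalgebra $\n^+=\bigoplus_{\alpha\in\Phi^+}\q_{\alpha}$ shifts weights by positive roots, so choosing $\lambda$ maximal among the finitely many weights of $\mathbf{v}$ forces $\n^+\mathbf{v}_{\lambda}=0$. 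Let $\mathbf{v}'=\{v\in\mathbf{v}_{\lambda}\mid hv=\lambda(h)v\ \text{for all}\ h\in\h_{\bar 0}\}$ be the genuine $\lambda$-weight space, which is nonzero because the commuting operators $h-\lambda(h)$ are nilpotent on $\mathbf{v}_{\lambda}$ and hence have a common kernel vector there. Using $[\h_{\bar 0},\h]=0$ and $[\h,\n^+]\subseteq\n^+$ one checks routinely that $\mathbf{v}'$ is preserved by $\h_{\bar 0}$ (which acts by $\lambda$), by $\h_{\bar 1}$, and by $\n^+$ (which annihilates it), so $\mathbf{v}'$ is a nonzero $\bb_+$-submodule; by simplicity $\mathbf{v}'=\mathbf{v}$. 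This establishes (a) and (c) simultaneously and shows that $\lambda$ is the unique weight of $\mathbf{v}$.

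For (b), the residual structure makes $\mathbf{v}$ a simple $\h$-module on which $\h_{\bar 0}$ acts by $\lambda$. Recall $F_{\lambda}(u,v)=\lambda([u,v])$ on $\h_{\bar 1}$, that $I_{\lambda}\subseteq U(\h)$ is generated by $\ker F_{\lambda}$ and $\{a-\lambda(a)\mid a\in\h_{\bar 0}\}$, and that $U(\h)/I_{\lambda}\cong\mbox{Cliff}(\lambda)$ by \eqref{cliffeq29}. The generators $a-\lambda(a)$ act as $0$ by (c). For $u\in\ker F_{\lambda}$ I would show that $\{v\in\mathbf{v}\mid uv=0\}$ is an $\h$-submodule: it is $\h_{\bar 0}$-stable, and for $w\in\h_{\bar 1}$ and $v$ with $uv=0$ one computes $u(wv)=[u,w]v=\lambda([u,w])v=F_{\lambda}(u,w)v=0$; moreover it is nonzero because $u^{2}=\tfrac12[u,u]$ acts by $\tfrac12 F_{\lambda}(u,u)=0$, so $u$ is square-zero on $\mathbf{v}$. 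By simplicity $u$ acts as $0$, so the $\h$-action descends to a simple $\mbox{Cliff}(\lambda)$-module structure (the quotient map $U(\h)\twoheadrightarrow\mbox{Cliff}(\lambda)$ being onto). Finally $\mbox{Cliff}(\lambda)$ is the Clifford superalgebra of the nondegenerate quadratic pair $(E_{\lambda},F_{\lambda})$ with $E_{\lambda}=\h_{\bar 1}/\ker F_{\lambda}$; the classification of complex Clifford superalgebras — a matrix superalgebra when $\dim E_{\lambda}$ is even, a queer-type matrix superalgebra when it is odd — gives that such an algebra admits, up to $\Pi$, a unique irreducible supermodule. Hence $\lambda$ determines $\mathbf{v}$ up to $\Pi$, completing (b).

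The hard part will be the two structural inputs: producing the common eigenvector $\mathbf{v}'$ — which I handle via the generalized $\h_{\bar 0}$-weight decomposition and maximality of $\lambda$, sidestepping a direct super-Engel argument — and the final identification of the irreducible $\mbox{Cliff}(\lambda)$-modules, which rests on the structure theory of Clifford superalgebras over $\C$ and is the one place where the parity ambiguity $\Pi$ genuinely enters. The remaining verifications are routine manipulations with the super-Jacobi identity and the module axioms.
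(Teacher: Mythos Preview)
The paper does not actually supply a proof of this proposition: it is stated with a citation to \cite{Pen86} and used as a black box, so there is no in-paper argument to compare against. Your proposal is a complete and correct self-contained proof. The reduction in (a) and (c) via a maximal generalized $\h_{\bar 0}$-weight, followed by passing to the genuine eigenspace and using simplicity, is standard and sound; the verification in (b) that each $u\in\ker F_{\lambda}$ has $\{v\mid uv=0\}$ an $\h$-submodule (using $[u,w]\in\h_{\bar 0}$ acts by $F_{\lambda}(u,w)=0$) and is nonzero (since $u^{2}=\tfrac12[u,u]$ acts by $\tfrac12 F_{\lambda}(u,u)=0$) is exactly the right mechanism to force the action through $U(\h)/I_{\lambda}\cong\mbox{Cliff}(\lambda)$, and the final appeal to the classification of complex Clifford superalgebras for the uniqueness up to $\Pi$ is the correct endpoint. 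In short, you have reconstructed the argument that the paper outsources to Penkov.
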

 \begin{rmk}\label{remprop12}
 From Proposition \ref{prop1}, we know that the dimension of the highest weight space of a highest weight $\q$-module with highest weight $\lambda$ is the same as the dimension of an irreducible $\mbox{Cliff}(\lambda)$-module. On the other hand all irreducible $\mbox{Cliff}(\lambda)$-modules have the same dimension (see, for example, \cite[Table 2]{ABS64}). Thus the dimension of the highest weight space is constant for all highest weight modules with highest weight $\lambda$.
\end{rmk}
 
 \begin{prop}\label{prop4a}
Let $\lambda\in \Lambda^{+}$ and $V(\lambda)$ be the irreducible highest weight $\q$-module generated by an irreducible finite dimensional $\bb_{+}$-module {\bf v}. Then $ f_{i}^{\lambda(h_{i})+1}v=0$, for all $v\in {\bf v}$ and $i\in I$.
\end{prop}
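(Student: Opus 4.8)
The plan is to establish the stronger claim that $w := f_i^{\lambda(h_i)+1}v$ is annihilated by all of $\n^+$; write $m := \lambda(h_i) = \lambda_i-\lambda_{i+1}$, and note $m\in\Z_{\geq 0}$ since $\lambda\in\Lambda^+\subseteq\Lambda_{\bar 0}^+$. Once this is known, irreducibility of $V(\lambda)$ finishes matters: since $\n^+w=0$ we get $\mathbf{U}(\n^+)w=\C w$, so by the triangular decomposition \eqref{eq8} the submodule generated by $w$ is $\mathbf{U}(\n^-)\mathbf{U}(\h)w$; as $\h$ centralizes $\h_{\bar 0}$, $\mathbf{U}(\h)w$ sits in the single weight space $V(\lambda)_{\lambda-(m+1)\alpha_i}$, so every weight of $\mathbf{U}(\q)w$ is $\leq\lambda-(m+1)\alpha_i<\lambda$. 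Hence $\mathbf{U}(\q)w$ is a proper submodule, so it is $0$ and $w=0$. (The same principle — an $\n^+$-singular vector of weight strictly below $\lambda$ must vanish in $V(\lambda)$ — will be invoked once more below.)

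The even raising operators are immediate. For $j\neq i$ we have $[e_j,f_i]=0$ and $e_jv=0$ (Proposition \ref{prop1}), so $e_jw=f_i^{m+1}e_jv=0$; for $j=i$ the $\mathfrak{sl}_2$-identity $[e_i,f_i^{m+1}]=(m+1)f_i^m(h_i-m)$ in $\mathbf{U}(\q)$ gives $e_iw=(m+1)f_i^m(h_i-m)v=0$ because $(h_i-m)v=0$. Likewise, for $j\neq i$ with $|i-j|\geq 2$ one has $[e_j',f_i]=0$, so $e_j'w=f_i^{m+1}e_j'v=0$.

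The real content is $e_i'w=0$ and $e_{i\pm1}'w=0$. Put $h_i':=k_i'-k_{i+1}'$. From $[e_i',f_i]=h_i'$, $[h_i',f_i]=-2f_i'$ and $[f_i,f_i']=0$ one obtains, for all $k\geq 1$, the identity $[e_i',f_i^k]=kh_i'f_i^{k-1}+k(k-1)f_i'f_i^{k-2}$ in $\mathbf{U}(\q)$; applying it to $v$ and using $e_i'v=0$ gives
\[
e_i'w=(m+1)P,\qquad P:=h_i'f_i^mv+m\,f_i'f_i^{m-1}v
\]
(for $m=0$ this reads simply $e_i'f_iv=h_i'v$). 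The vector $P$ has weight $\lambda-m\alpha_i$, and the key claim is that $P$ is $\n^+$-singular: $e_iP=0$ since $[e_i,e_i']=0$ and $e_iw=0$; $e_i'P=0$ since $(e_i')^2=0$; for $|i-j|\geq 2$ the operators $e_j,e_j'$ (anti)commute with $e_i'$ and already kill $w$; and for $j=i\pm1$, moving $e_j$ or $e_j'$ past $e_i'$ produces — via the relations $[e_i,e_{i+1}']=[e_i',e_{i+1}]$, $[e_i,e_{i+1}]=[e_i',e_{i+1}']$ and their $(i-1,i)$-analogues — a scalar multiple of one of $e_{i,i+2}$, $e_{i,i+2}'$, $e_{i-1,i+1}$, $e_{i-1,i+1}'$ applied to $w$; a short matrix computation ($[e_{i,i+2},f_i]=-e_{i+1}$, $[e_{i,i+2}',f_i]=-e_{i+1}'$, $[e_{i+1},f_i]=[e_{i+1}',f_i]=0$, and similarly near $i-1$) then shows each such vector is a combination of $f_i^{m+1}(\n^+v)$ and $f_i^m(\n^+v)$, hence $0$.

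Finally, two cases. If $m\geq 1$ then $\lambda-m\alpha_i\neq\lambda$, so the $\n^+$-singular vector $P$ vanishes by the principle of the first paragraph, whence $e_i'w=0$. If $m=0$, then $\lambda_i=\lambda_{i+1}$ forces $\lambda_i=\lambda_{i+1}=0$ because $\lambda\in\Lambda^+$; consequently $F_\lambda(k_i',-)=F_\lambda(k_{i+1}',-)=0$, so $k_i',k_{i+1}'\in I_\lambda$ act by $0$ on $\mathbf{v}$ (a $\mbox{Cliff}(\lambda)=\mathbf{U}(\h)/I_\lambda$-module by Proposition \ref{prop1}), and so $e_i'w=e_i'f_iv=h_i'v=0$. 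In every case $\n^+$ annihilates $w$, and the first paragraph concludes $w=f_i^{\lambda(h_i)+1}v=0$. I expect the crux to be exactly the analysis of the odd operator $e_i'$: in contrast with the even (Lie-algebra) situation, $f_i^{m+1}v$ is \emph{not} $\n^+$-singular at the level of the Verma module, so one cannot finish by $\mathfrak{sl}_2$-theory alone — the argument must pass through irreducibility of $V(\lambda)$ for the auxiliary vector $P$ as well, and it must use the $\q$-specific fact that $h_i'$ acts by $0$ on $\mathbf{v}$ precisely when $\lambda_i=\lambda_{i+1}$ ($=0$), which is exactly where the defining condition of $\Lambda^+$ enters.
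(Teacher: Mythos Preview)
Your proof is correct and follows the same strategy as the paper's: show $w=f_i^{m+1}v$ is $\n^+$-singular by first showing the auxiliary vector $e_i'w$ (your $(m+1)P$) is itself $\n^+$-singular of weight $\lambda-m\alpha_i<\lambda$ when $m\geq 1$, hence zero by irreducibility, and handling $m=0$ directly via the $\Lambda^+$-condition and the Clifford structure on $\mathbf{v}$. One small simplification you miss: the defining relation $[e_j',f_i]=\delta_{ji}(k_j'-k_{j+1}')$ gives $[e_{i\pm1}',f_i]=0$, so $e_{i\pm1}'w=0$ is immediate and not part of ``the real content''; your more careful commutator analysis (pushing $e_{i\pm1},e_{i\pm1}'$ past $e_i'$ and reducing to $e_{i,i+2}w$, $e_{i,i+2}'w$, etc.) is only genuinely needed when verifying that $P$ is singular---a step the paper in fact asserts without justification.
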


\begin{proof}
Note that one can easily show by induction that for $k\in \Z_{\geq 0}$, 
\[e_{i}f_{i}^{k} = f_{i}^k e_{i} + kf_{i}^{k-1}(h_{i} - (k-1)).\]
Since $\n^{+}v = 0$ for all $v\in {\bf v}$, then
\begin{align*}
e_{i}f_{i}^{k} v &= f_{i}^k e_{i}(v)+ kf_{i}^{k-1}(h_{i} - (k-1)) v\\
& = (\lambda(h_{i}) - (k-1)) k f_{i}^{k-1}v.
\end{align*}
If $k = \lambda(h_{i}) +1$, one can see that $e_{i}f_{i}^{\lambda(h_{i})+1}v =0$. For $i\neq j$, as $[e_{j}',f_{i}]=0=[e_{j}, f_{i}]$, we have $e_{j}f_{i}^{\lambda(h_{i})+1}v = 0=e_j'f_{i}^{\lambda(h_{i})+1}v $. 

Now suppose that $e_i'f_{i}^{\lambda(h_{i})+1}v \neq 0$. Since $[e_{i}, e_i']=0$ for $|i-j| \neq 1$, so 
\[0=[e_{i}, e_i'] f_{i}^{\lambda(h_{i})+1}v = e_{i}(e_i'f_{i}^{\lambda(h_{i})+1}v) -e_i'(e_{i}f_{i}^{\lambda(h_{i})+1}v).\]  We get $e_{i}(e_i'f_{i}^{\lambda(h_{i})+1}v) = 0$ and  $e_i'(e_if_{i}^{\lambda(h_{i})+1}v) = 0$. Similarly,  as $[e_i', e_j'] = 0$ for $|i-j| \neq 1$, we get
\[ e_i'(e_i'f_{i}^{\lambda(h_{i})+1}v) =0.\]
Also, for $i\neq j$, we have 
\[e_{j}(e_i'f_{i}^{\lambda(h_{i})+1}v)  = e_j'e_i'f_{i}^{\lambda(h_{i})+1}v) = 0.\]
If $\lambda(h_{i})\geq 1$, then weight of the weight vector $e_i'f_{i}^{\lambda(h_{i})+1}v $is $ \lambda- \lambda(h_{i})\alpha_i <\lambda$. Thus, $e_i'f_{i}^{\lambda(h_{i})+1}v$ would generate a nontrivial proper submodule of $V(\lambda)$, which contradicts the irreducibility of $V(\lambda)$.
 
 If $\lambda(h_{i}) = 0$, then $\lambda_i=\lambda_{i+1}=0$ and since $v\in {\bf v}$, so by Lemma \ref{clifflem1}, we get $k_{i}' v= k_{i+1}' v =0$. Now
 \[e_i'f_iv = f_ie_i'v + (k_{i}' - k_{i+1}')v =0.\] Therefore, in any case $e_{i}'f_{i}^{\lambda(h_{i})+1}v = 0$. 
 
 Similarly, if $f_{i}^{\lambda(h_{i})+1}v \neq  0$, it would generate a non-trivial proper submodule of $V(\lambda)$. Hence, $f_{i}^{\lambda(h_{i})+1}v =0$ for all $v \in {\bf v}$.

\end{proof}
 \begin{defn}
 Let $\mathbf{v}(\lambda)$ be a finite dimensional irreducible $\bb_{+}$-module determined by $\lambda$ up to $\Pi$. The {\em Weyl module} $W(\lambda)$ of $\q$ with highest weight $\lambda$ is defined to be 
 \[W(\lambda) := \mathbf{U}(\q)\otimes_{\mathbf{U}(\bb_{+})} \mathbf{v}(\lambda).\]
  \end{defn}
  Note that in the above definition, the structure of $W(\lambda)$ is determined by $\lambda$ up to $\Pi$.

 \begin{prop}[\cite{Pen86}, Theorem 2, 4]\label{prop2}
\begin{enumerate}

\item For any weight $\lambda$, $W(\lambda)$ has a unique maximal submodule $N(\lambda)$.
\item For each finite dimensional simple $\q$-module $M$, there exists a unique weight $\lambda \in\Lambda_{\bar 0}^+$ and a surjective homomorphism $W(\lambda)\to M$ (one of the two possible $W(\lambda)$).
\item The irreducible quotient $L(\lambda):= W(\lambda)/ N(\lambda)$ is finite dimensional if and only if $\lambda \in \Lambda^+$.
\end{enumerate}
\end{prop}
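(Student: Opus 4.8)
\emph{Proof strategy.} The three assertions are obtained by transporting the standard highest-weight-category arguments to the queer super setting; the main tools are the triangular decomposition~\eqref{eq8} together with the PBW theorem (Lemma~\ref{lem3}), Frobenius reciprocity for the induction functor $\mathbf{U}(\q)\otimes_{\mathbf{U}(\bb_{+})}(-)$, Proposition~\ref{prop1}, and Proposition~\ref{prop4a}. For part (a): by~\eqref{eq8} and Lemma~\ref{lem3} we have $W(\lambda)\cong\mathbf{U}(\n^{-})\otimes_{\C}\mathbf{v}(\lambda)$ as vector spaces, so $W(\lambda)$ is a weight module with $W(\lambda)_{\lambda}\cong\mathbf{v}(\lambda)$ finite dimensional and $\bb_{+}$-stable ($\n^{+}$ acting by $0$), while every other weight of $W(\lambda)$ lies in $\lambda-\sum_{i}\Z_{\geq 0}\alpha_{i}$ with nonzero coefficient vector. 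If $K\subseteq W(\lambda)$ is a submodule with $K_{\lambda}\neq 0$, then $K_{\lambda}$ is a nonzero $\bb_{+}$-submodule of the irreducible $\bb_{+}$-module $\mathbf{v}(\lambda)$, hence equals it, so $K\supseteq\mathbf{U}(\q)\mathbf{v}(\lambda)=W(\lambda)$; thus every proper submodule lies in $\bigoplus_{\mu\neq\lambda}W(\lambda)_{\mu}$, the sum $N(\lambda)$ of all proper submodules is itself proper, and it is therefore the unique maximal submodule.

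For part (b), let $M$ be a finite-dimensional simple $\q$-module. Restriction to $\q_{\bar 0}\cong\gl(n)$ makes $M$ a weight module, and since $\n^{+}$ acts by weight-raising operators the associative subalgebra of $\End M$ it generates is nilpotent, so $M^{\n^{+}}\neq 0$; this space is $\h$-stable, and I pick $\lambda$ maximal among the weights occurring in it and a simple $\bb_{+}$-submodule $\mathbf{v}\subseteq (M^{\n^{+}})_{\lambda}$. By Proposition~\ref{prop1}, $\mathbf{v}$ is, up to $\Pi$, the module $\mathbf{v}(\lambda)$, with $\h_{\bar 0}$ acting by $\lambda$; moreover $\lambda\in\Lambda_{\bar 0}^{+}$ since any nonzero $v\in\mathbf{v}$ is a $\gl(n)$-highest weight vector of weight $\lambda$ generating a finite-dimensional $\gl(n)$-module, forcing $\lambda(h_{i})\in\Z_{\geq 0}$. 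Frobenius reciprocity turns the inclusion $\mathbf{v}\hookrightarrow M$ into a nonzero $\q$-homomorphism from $\mathbf{U}(\q)\otimes_{\mathbf{U}(\bb_{+})}\mathbf{v}$ --- one of the two modules $W(\lambda)$, $\Pi W(\lambda)$ --- to $M$, which is surjective because $M$ is simple. For uniqueness, any surjection from $W(\mu)$ or $\Pi W(\mu)$ onto $M$ exhibits $M$ as a highest weight module with top weight $\mu$, so $\mu$ is the unique maximal weight of $M$; the same statement for $\lambda$ then gives $\mu=\lambda$.

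For part (c), write $L(\lambda)=W(\lambda)/N(\lambda)$ and, following Proposition~\ref{prop4a}, keep calling its highest weight space $\mathbf{v}(\lambda)$. Assume first that $\lambda\in\Lambda^{+}$. By Proposition~\ref{prop4a}, $f_{i}^{\lambda(h_{i})+1}$ kills $\mathbf{v}(\lambda)$ for all $i\in I$, while $e_{i}$ kills it and $\h_{\bar 0}$ acts by the dominant integral weight $\lambda$; hence each $v\in\mathbf{v}(\lambda)$ generates a highest weight $\gl(n)$-module on which the Serre-type relations hold, which is therefore finite dimensional, so $V_{0}:=\mathbf{U}(\q_{\bar 0})\mathbf{v}(\lambda)$ is finite dimensional. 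Next I use the elementary fact that if $W\subseteq L(\lambda)$ is finite dimensional and $\q_{\bar 0}$-stable, then so is $W+\q_{\bar 1}W$ (from $z(yw)=y(zw)+[z,y]w$ for $z\in\q_{\bar 0}$, $y\in\q_{\bar 1}$); iterating this at most $\dim\q_{\bar 1}$ times, and noting that by Lemma~\ref{lem3} $\mathbf{U}(\q)$ equals $\mathbf{U}(\q_{\bar 0})$ times the finite-dimensional span of the odd PBW monomials, I conclude that $\mathbf{U}(\q)W$ is finite dimensional for every finite-dimensional $\q_{\bar 0}$-stable $W$; applied to $V_{0}$, this shows $L(\lambda)=\mathbf{U}(\q)V_{0}$ is finite dimensional.

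Conversely, suppose $L(\lambda)$ is finite dimensional. Restriction to $\q_{\bar 0}\cong\gl(n)$ together with the fact that every vector of $\mathbf{v}(\lambda)$ is a $\gl(n)$-highest weight vector of weight $\lambda$ forces $\lambda\in\Lambda_{\bar 0}^{+}$. Suppose, toward a contradiction, that $\lambda_{i}=\lambda_{i+1}\neq 0$ for some $i$; then $\lambda(h_{i})=0$, and working inside the copy of $\q(2)$ spanned by $e_{i},h_{i},f_{i},e_{i}',f_{i}',k_{i}',k_{i+1}'$ and using the defining relations, one computes for $0\neq v\in\mathbf{v}(\lambda)$ that $e_{i}'f_{i}'v=(k_{i}+k_{i+1})v=2\lambda_{i}v$ and that $x:=f_{i}f_{i}'v$ has $h_{i}$-weight $-4$, with $e_{i}^{3}x=0$ and $e_{i}^{2}x=-2(k_{i}'-k_{i+1}')v$; since $L(\lambda)$ is a finite-dimensional $\fsl_{2}$-module for the triple $(e_{i},h_{i},f_{i})$ and $e_{i}^{3}$ is injective on the $(-4)$-weight space of any such module, $x=0$, hence $(k_{i}'-k_{i+1}')v=0$ for all $v$, i.e.\ $k_{i}'-k_{i+1}'$ annihilates $\mathbf{v}(\lambda)$. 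But by the Clifford-algebra description of $\mathbf{v}(\lambda)$ (Proposition~\ref{prop1} and~\eqref{cliffeq29}) the element $(k_{i}'-k_{i+1}')^{2}$ acts on $\mathbf{v}(\lambda)$ as the nonzero scalar $\lambda(k_{i}+k_{i+1})=2\lambda_{i}$, so $k_{i}'-k_{i+1}'$ acts injectively on $\mathbf{v}(\lambda)\neq 0$ --- a contradiction. Hence $\lambda\in\Lambda^{+}$. The step I expect to be the main obstacle is precisely this last one: isolating the right $\q(2)$, carrying out the $\fsl_{2}$-bookkeeping correctly, and combining it with the Clifford-module structure on $\mathbf{v}(\lambda)$ to reach an honest contradiction; the remaining parts are a routine transcription of the classical highest-weight machinery.
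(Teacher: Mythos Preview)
The paper does \emph{not} supply its own proof of this proposition: it is quoted verbatim from Penkov~\cite{Pen86} and left unproved.  So there is no ``paper's proof'' to compare against, and what you have written is a genuine, self-contained argument that the paper lacks.

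Your proof is correct in all three parts.  Parts~(a) and~(b) are the standard highest-weight/Frobenius-reciprocity arguments transported to the queer setting, and they go through exactly as you say once one observes that $W(\lambda)_{\lambda}\cong\mathbf v(\lambda)$ is an irreducible $\bb_{+}$-module (so any submodule meeting it swallows it).  For part~(c), the forward direction parallels the paper's own proof of Proposition~\ref{prop3} for $\bar L(\lambda)$: your iteration $W\mapsto W+\q_{\bar1}W$ starting from $V_0=\mathbf U(\q_{\bar0})\mathbf v(\lambda)$ does terminate after $N=\dim\q_{\bar1}$ steps because, by PBW, $\mathbf U(\q)=\mathbf U(\q_{\bar0})\cdot(\text{odd monomials of length}\le N)$ and each $W_k$ is $\q_{\bar0}$-stable; you might state this termination one sentence more explicitly.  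The converse direction is the nontrivial part, and your $\q(2)$/$\fsl_2$ computation checks out: with $\lambda(h_i)=0$ one gets $e_i x=(k_i'-k_{i+1}')f_i v$, $e_i^2x=-2(k_i'-k_{i+1}')v$, $e_i^3x=0$, and finite-dimensional $\fsl_2$-theory forces $x=0$, hence $(k_i'-k_{i+1}')v=0$; but $(k_i'-k_{i+1}')^2=k_i+k_{i+1}$ acts by the nonzero scalar $2\lambda_i$, giving the contradiction.  This is precisely the mechanism behind Penkov's original result, so your proof is in the same spirit as the cited source even though the present paper omits it.
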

\begin{rmk}
For $\lambda \in \Lambda^+$, up to isomorphism, there exists two simple finite dimensional modules w.r.t. highest weight $\lambda$ namely 
$ L(\lambda)$ and $\Pi L(\lambda)$, where $\Pi$ is the parity change functor. 
\end{rmk}


Let $P(\lambda) = \{\mu \in \h_{\bar 0}^{*}\mid M_{\mu}\neq 0\}$. Let $Q$ (resp.  $Q^+$) be the integer span (resp. $\Z_{>0}$-span ) of the simple roots. Denote by $\leq$ the usual partial order on $P(\lambda)$,
\[\mu_1, \mu_2 \in P(\lambda), \quad \mu_1 \leq \mu_2 \iff \mu_2-\mu_1 \in Q^+.\] 

Since $\q_{\bar 0} = \mathfrak{gl}(n)$ is reductive Lie algebra, for each even simple root $\alpha_{i}$ we can choose elements $e_{i} \in \q_{\alpha_{i}}, f_{i} \in \q_{-\alpha_{i}},$ and $h_{i}\in \h_{\bar{0}}$, such that the subalgebra generated by these elements is isomorphic to $\mathfrak{sl}(2)$, with these elements satisfying the relations for the standard Chevalley generators. In this case, we say that the set $\{e_{i}, f_{i}, h_{i}\}$ is an $\mathfrak{sl}(2)$-triple.

Denote the irreducible highest weight $\q$-module with highest weight $\lambda \in \h_{0}^{*}$, by  $L(\lambda)$ which is unique upto $\Pi$ and consider the weight space decomposition $L(\lambda) = \bigoplus_{\mu \in \h_{\bar 0}^{*}} L(\lambda)_{\mu}.$

\begin{defn} [The module $\bar{L}(\lambda)$]
For $\lambda\in \Lambda^+$, we define $\bar{L}(\lambda)$ (up to $\Pi$) to be the $\q$-module generated by $L(\lambda)_{\lambda}$ with defining relations
\begin{equation}\label{eq9}
\n^+k_{\lambda} = 0, \quad hk_{\lambda} = \lambda(h) k_{\lambda},\quad f_{i}^{\lambda(h_{i})+1}k_{\lambda} =0, \quad \mbox{for all}\quad k_{\lambda}\in L(\lambda)_{\lambda},  h\in \h_{\bar{0}},\;\;i\in I.
\end{equation}

 \end{defn}
\begin{prop}\label{prop3}
The module $\bar{L}(\lambda)$ is finite dimensional for all $\lambda\in \Lambda^+$.
\end{prop}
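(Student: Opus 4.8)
The plan is to reduce the finite‑dimensionality of $\bar{L}(\lambda)$ as a $\q$-module to the classical finite‑dimensionality statement over the reductive even part $\q_{\bar 0}\cong\gl(n)$, bridging the two via the PBW theorem (Lemma~\ref{lem3}). As a preliminary observation, $\bar{L}(\lambda)$ is a weight module (with all weights lying in $\lambda-Q^+$) generated by its top weight space $\bar{L}(\lambda)_\lambda$, and $\bar{L}(\lambda)_\lambda$ is finite dimensional: it is a homomorphic image of $L(\lambda)_\lambda$, which by Remark~\ref{remprop12} has the dimension of an irreducible $\mbox{Cliff}(\lambda)$-module and hence is finite dimensional. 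By the defining relations \eqref{eq9}, every $v\in\bar{L}(\lambda)_\lambda$ satisfies $\n^+v=0$, $hv=\lambda(h)v$ for $h\in\h_{\bar 0}$, and $f_i^{\lambda(h_i)+1}v=0$ for all $i\in I$; moreover $\lambda(h_i)=\lambda_i-\lambda_{i+1}\in\Z_{\geq 0}$, since $\lambda\in\Lambda^+\subseteq\Lambda^+_{\bar 0}$.

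I would then form the $\q_{\bar 0}$-submodule $M_0:=\mathbf{U}(\q_{\bar 0})\,\bar{L}(\lambda)_\lambda\subseteq\bar{L}(\lambda)$ and show it is finite dimensional. For a fixed $v\in\bar{L}(\lambda)_\lambda$, sending the highest weight generator $v_\lambda$ of the $\gl(n)$-Verma module $M_{\gl(n)}(\lambda)$ to $v$ is well defined (as $\n^+v=0$ forces $\n^+_{\bar 0}v=0$, and $hv=\lambda(h)v$) and surjects onto $\mathbf{U}(\q_{\bar 0})v$; its kernel contains $f_i^{\lambda(h_i)+1}v_\lambda$ for all $i\in I$. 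Hence $\mathbf{U}(\q_{\bar 0})v$ is a quotient of $M_{\gl(n)}(\lambda)$ modulo the submodule generated by the vectors $f_i^{\lambda(h_i)+1}v_\lambda$, which, $\lambda$ being $\gl(n)$-dominant integral, is the finite dimensional irreducible $\gl(n)$-module $L_{\gl(n)}(\lambda)$ by the classical highest weight theory for reductive Lie algebras. Since $\bar{L}(\lambda)_\lambda$ is finite dimensional, $M_0$ is a finite sum of such modules, hence finite dimensional.

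To conclude, I would apply PBW to climb from $\q_{\bar 0}$ back to $\q$. Fix an ordered homogeneous basis of $\q$ in which a basis $y_1,\dots,y_{n^2}$ of the purely odd space $\q_{\bar 1}$ precedes a basis of $\q_{\bar 0}$; then Lemma~\ref{lem3} yields $\mathbf{U}(\q)=\sum_{\mathbf{y}}\mathbf{y}\,\mathbf{U}(\q_{\bar 0})$, the sum taken over the $2^{n^2}$ square‑free monomials $\mathbf{y}=y_{j_1}\cdots y_{j_k}$ with $j_1<\dots<j_k$. Therefore
\[
\bar{L}(\lambda)=\mathbf{U}(\q)\,\bar{L}(\lambda)_\lambda=\sum_{\mathbf{y}}\mathbf{y}\,\mathbf{U}(\q_{\bar 0})\,\bar{L}(\lambda)_\lambda=\sum_{\mathbf{y}}\mathbf{y}\,M_0,
\]
which is a finite sum of finite dimensional subspaces; thus $\bar{L}(\lambda)$ is finite dimensional.

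The only substantive ingredient is the middle step: a highest weight module over the reductive Lie algebra $\q_{\bar 0}$ with dominant integral highest weight on which each $f_i^{\lambda(h_i)+1}$ annihilates the top weight space is finite dimensional. The ascent from $\q_{\bar 0}$ to $\q$ is essentially free, because $\q_{\bar 1}$ is finite dimensional and purely odd, so the exterior PBW factor $\bigwedge\q_{\bar 1}$ multiplies dimensions only by the constant $2^{n^2}$; in particular the odd generators $e_i'$ cannot carry $M_0$ outside $\bigwedge(\q_{\bar 1})\,M_0$, which is precisely what the displayed identity records.
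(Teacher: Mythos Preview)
Your proof is correct and follows essentially the same strategy as the paper: reduce to showing that $\mathbf{U}(\q_{\bar 0})\bar{L}(\lambda)_\lambda$ is finite dimensional via the classical highest weight theory for $\gl(n)$, and then use PBW together with the finiteness of $\bigwedge\q_{\bar 1}$ to pass from $\q_{\bar 0}$ to $\q$. Your choice of PBW ordering (odd generators before even) makes the final step $\bar{L}(\lambda)=\sum_{\mathbf{y}}\mathbf{y}\,M_0$ cleaner than the paper's presentation, but the argument is otherwise the same.
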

\begin{proof}
Let $x_1, \ldots, x_m$ and $y_1, \ldots, y_m$ be a homogeneous basis of $\q_{\bar0}$ and $\q_{\bar1}$, respectively. Then by Lemma \ref{lem3}, the monomials
\[x_1^{a_1}\cdots x_m^{a_m} y_1^{b_1}\cdots y_m^{b_m}, \quad a_1, \ldots, a_m \geq 0, \quad \mbox{and}\quad b_1, \ldots, b_m \in \{0, 1\},\]
form a basis of $\mathbf{U}(\q)$. Since $\{y_1^{b_1}\cdots y_m^{b_m} \mid b_j =0,1\}$ is a finite set, it is enough to show $\mathbf{U}(\q_{\bar 0})L(\lambda)_{\lambda}$ is finite dimensional. 

 Consider irreducible $\q_{\bar{0}}$-module  $V(\lambda)$ with highest weight $\lambda\in \Lambda^+.$ Since $\q_{\bar 0} = \mathfrak{gl}(n)$ is reductive Lie algebra,  $\lambda(h_{i})\in \N$ for each even simple root $\alpha_{i}$ with $i\in I$. Hence we have $V(\lambda)$ is finite dimensional. Note the centre $Z(\q_{\bar{0}})$ acts as a scalar on  $V(\lambda)$. Hence $V(\lambda)$ is isomorphic to $\q_{\bar 0}$-module generated by a vector $u_{\lambda}$ with defining relations
\[\n_{\bar 0}^+u_{\lambda} = 0, \quad hu_{\lambda} = \lambda(h) u_{\lambda},\quad f_{\alpha}^{\lambda(h_{i})+1}u_{\lambda} =0, \quad \mbox{for all}\quad h\in \h_{\bar{0}},\;\; i\in I.\]


Now $\mathbf{U}(\q_{\bar 0})L(\lambda)_{\lambda} \subset \sum_{k_{\lambda}\in L(\lambda)_{\lambda}} \mathbf{U}(\q_{\bar 0})k_{\lambda}\subseteq \bar{L}(\lambda)$ be the $\q_{\bar 0}$-submodule of $\bar{L}(\lambda)$. For any $k_{\lambda}\in L(\lambda)_{\lambda}$, we known that $\mathbf{U}(\q_{\bar 0})k_{\lambda}$ is a highest weight module over $\q_{\bar 0}$ with highest weight $\lambda$ satisfying $f_{\alpha}^{\lambda(h_{\alpha})+1}k_{\lambda} =0$. Thus, $\mathbf{U}(\q_{\bar 0})k_{\lambda}$ is cyclic and $k_{\lambda}$ satisfies \eqref{eq9} for any $k_{\lambda}\in L(\lambda)_{\lambda}$. Then there exists a unique surjective homomorphism of $\q_{\bar 0}$-submodules satisfying 
\[\psi: V(\lambda) \longrightarrow \mathbf{U}(\q_{\bar 0})k_{\lambda}, \quad xu_{\lambda}\mapsto xk_{\lambda}\]
for all $x\in \mathbf{U}(\q_{\bar 0})$. Since $\psi$ is surjective and $V(\lambda)$ is finite dimensional, it follows that  $\mathbf{U}(\q_{\bar 0})k_{\lambda}$  finite dimensional for any $k_{\lambda}\in L(\lambda)_{\lambda}$ and hence $\mathbf{U}(\q_{\bar 0})L(\lambda)_{\lambda}$ is finite dimensional.
\end{proof}



\begin{prop}\label{prop4}
For highest weight $\lambda \in \Lambda^{+}$, consider finite dimensional highest weight $\q$-module $V$. Then there exists a surjective homomorphism of $\q$-modules $\psi_1: \bar{L}(\lambda)\longrightarrow V$ up to $\Pi$. Moreover, there exists a unique upto $\Pi$ submodule $W$ of $\bar{L}(\lambda)$  such that $V \cong \bar{L}(\lambda)/W$ or $V \cong \Pi\left(\bar{L}(\lambda)\right)/ \Pi\left(W\right)$
 as $\q$-modules.
\end{prop}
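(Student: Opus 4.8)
The plan is to exploit the fact that $\bar L(\lambda)$ is, by construction, the universal finite dimensional highest weight $\q$-module with highest weight $\lambda$ whose highest weight space is a copy of $\mathbf v(\lambda)$: concretely, $\bar L(\lambda)$ is the quotient of $W(\lambda)=\mathbf U(\q)\otimes_{\mathbf U(\bb_+)}\mathbf v(\lambda)$ by the $\q$-submodule generated by the vectors $f_i^{\lambda(h_i)+1}\otimes v$ ($v\in\mathbf v(\lambda)$, $i\in I$). First I would verify that the highest weight space $V_\lambda$ of $V$ satisfies the three defining relations \eqref{eq9}. Because $\lambda$ is the top weight, $\n^+$ acts by zero on $V_\lambda$ and $\h_{\bar 0}$ acts by $\lambda$, so $V_\lambda$ is a finite dimensional $\mathrm{Cliff}(\lambda)$-module; I will use that it is moreover a \emph{simple} $\bb_+$-module, whence by Proposition~\ref{prop1} it is isomorphic, up to $\Pi$, to $\mathbf v(\lambda)$. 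The only relation that is not immediate is $f_i^{\lambda(h_i)+1}V_\lambda=0$, and this follows from finite dimensional $\mathfrak{sl}(2)$-theory applied to the $\mathfrak{sl}(2)$-triple $\{e_i,f_i,h_i\}$: every $v\in V_\lambda$ is annihilated by $e_i$ and is an $h_i$-eigenvector with eigenvalue $\lambda(h_i)\in\Z_{\geq 0}$ (since $\lambda\in\Lambda^+\subseteq\Lambda_{\bar 0}^+$), so it generates an $\mathfrak{sl}(2)$-submodule of dimension $\lambda(h_i)+1$ and $f_i^{\lambda(h_i)+1}v=0$.

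Next, after replacing $V$ by $\Pi V$ if necessary, I may fix an even $\bb_+$-module isomorphism $\mathbf v(\lambda)=L(\lambda)_\lambda\xrightarrow{\sim}V_\lambda$. By the universal property of induction it extends to a $\q$-module homomorphism $W(\lambda)\to V$, and by the previous paragraph this homomorphism annihilates the submodule defining $\bar L(\lambda)$, so it descends to a homomorphism $\psi_1\colon\bar L(\lambda)\to V$. Its image is a $\q$-submodule of $V$ containing $V_\lambda$, hence equals $V=\mathbf U(\q)V_\lambda$; so $\psi_1$ is surjective. Then $W:=\ker\psi_1$ gives $V\cong\bar L(\lambda)/W$, while if the initial twist by $\Pi$ was needed one instead obtains $V\cong\Pi(\bar L(\lambda))/\Pi(W)$, which is the asserted dichotomy.

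Finally, for the uniqueness of $W$ up to $\Pi$, I would invoke the super Schur lemma for the simple $\mathrm{Cliff}(\lambda)$-supermodule $\bar L(\lambda)_\lambda=\mathbf v(\lambda)$: its even endomorphism ring is $\C$. Consequently, if $\psi_1,\psi_1'\colon\bar L(\lambda)\twoheadrightarrow V$ are two even surjections, they restrict to isomorphisms of the equidimensional weight spaces $\bar L(\lambda)_\lambda\xrightarrow{\sim}V_\lambda$, these restrictions differ by a nonzero scalar $c$, and hence the $\q$-module map $\psi_1-c\psi_1'$ vanishes on the generating space $\bar L(\lambda)_\lambda$ and is therefore zero; thus $\ker\psi_1=\ker\psi_1'$, which pins down $W$ in the case $V_\lambda\cong\mathbf v(\lambda)$. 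The cases $V_\lambda\cong\mathbf v(\lambda)$ and $V_\lambda\cong\Pi\mathbf v(\lambda)$ coincide precisely in the type-$\q$ situation, where $\mathbf v(\lambda)\cong\Pi\mathbf v(\lambda)$ and $\Pi W$ occurs in place of $W$, so in all cases $W$ is unique up to $\Pi$. I expect this last point to be the main obstacle: keeping track of the parity change functor, of which of $\bar L(\lambda)$ and $\Pi\bar L(\lambda)$ maps onto $V$, and of the possible odd self-maps of $\mathbf v(\lambda)$ is exactly the bookkeeping that ``up to $\Pi$'' records, and one must also make sure that the operative notion of highest weight module genuinely forces $V_\lambda$ to be a simple $\bb_+$-module.
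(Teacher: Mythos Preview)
Your argument is correct and follows essentially the same route as the paper: verify that the highest weight space of $V$ satisfies the defining relations \eqref{eq9}, use the (up to $\Pi$) isomorphism $\bar L(\lambda)_\lambda\cong V_\lambda$ of irreducible $\bb_+$-modules to build the surjection, and take $W=\ker\psi_1$. Where the paper is terse---citing Proposition~\ref{prop4a} for the $f_i^{\lambda(h_i)+1}$ relation, invoking Remark~\ref{remprop12} for the dimension equality of highest weight spaces, and asserting uniqueness of $W$ in a single line---your direct $\mathfrak{sl}(2)$ computation and super Schur-lemma argument supply those details explicitly.
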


\begin{proof}
Consider the highest weight $\q$-module $V$, with highest weight $\lambda$, is generated by an irreducible $\bb_{+}$-module $\mathbf{v}$. For $v_{\lambda}\in \mathbf{v}$ the first two relations in \eqref{eq9} hold. Since $V$ is finite dimensional, then by Proposition \ref{prop4a}, $ f_{i}^{\lambda(h_{i})+1}v_{\lambda}=0$, for all $v_{\lambda}\in {\bf v}, i\in I$. Also, by Remark \ref{remprop12}, the dimension of the highest weight space $\bar{L}(\lambda)_{\lambda}$ is equal to the dimension of the highest weight space ${\bf v}$. Thus the map $\psi_1: \bar{L}(\lambda)\longrightarrow V$ induced by $\bar{L}(\lambda)_{\lambda} \longrightarrow \mathbf{v}$, is a surjective homomorphism of $\q$-modules up to $\Pi$. Since module homomorphism preserve weight spaces,  the kernel of $\psi_1$  is unique upto $\Pi$ and say $\ker(\psi_1)=W$.
\end{proof}


Since every simple finite dimensional $\q$-module is a highest weight module with highest weight $\lambda \in \Lambda^+$, Proposition \ref{prop4} applies to all simple finite dimensional $\q$-modules.

\section{Global Weyl modules}

Let $A$ denote a finitely generated commutative associative unital algebra and $\q(n)=: \q$ with $n\geq 2$. Take $\q\otimes A$, with $\Z_2$-grading is given by $(\q\otimes A)_{j} = \q_j \otimes A, j \in \Z_2$. Then $\q\otimes A$ with  bracket of any two homogeneous elements 
\[[x\otimes a, y\otimes b] := [x, y]\otimes ab,\quad x, y\in \q_{j}, a, b \in A\] is a Lie superalgebra.

Further, we identify $\q$ with a subalgebra of $\q\otimes A$ via the isomorphism $\q\cong \q\otimes \C$ and the inclusion $\q\otimes \C \subseteq \q\otimes A$. Let $\mathcal{I}$ be the full subcategory of the category of $\q$ modules whose objects are those modules that are isomorphic to direct sums of irreducible finite dimensional $\q_{\bar 0}$ modules.  Note that if $V \in \mathcal{I}$ then any element of $V$  lies in a finite dimensional $\q_{\bar{0}}$ submodule of $V$. Let $\mathcal{I}_{\q\otimes A, \q_{\bar 0}}$ denote the full subcategory of the category of $\q\otimes A$-modules whose objects are the $\q\otimes A$-modules whose restriction to $\q_{\bar 0}$ lies in $\mathcal{I}$.
\begin{lemma}\label{lemclosed}
Category $\mathcal{I}$ is closed under taking submodules, quotients, arbitrary direct sums and finite tensor products.
\end{lemma}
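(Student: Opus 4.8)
The plan is to verify each closure property by reducing to the classical representation theory of the reductive Lie algebra $\q_{\bar 0} = \gl(n)$. Recall that $\mathcal{I}$ is the full subcategory of $\q_{\bar 0}$-modules whose objects are isomorphic to direct sums of irreducible finite dimensional $\q_{\bar 0}$-modules, equivalently the category of \emph{locally finite, semisimple} $\q_{\bar 0}$-modules on which the center $Z(\q_{\bar 0})$ acts locally finitely with integral weights (more concretely: modules that are direct sums of finite dimensional irreducibles). Since $\gl(n)$ is reductive, every finite dimensional module decomposes as a direct sum of irreducibles, so $\mathcal{I}$ is precisely the category of $\q_{\bar 0}$-modules all of whose finitely generated submodules are finite dimensional and semisimple.

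First I would treat \textbf{arbitrary direct sums}: this is immediate, since a direct sum of direct sums of finite dimensional irreducibles is again such a direct sum. Next, \textbf{submodules and quotients}: if $V \in \mathcal{I}$ and $W \subseteq V$ is a $\q_{\bar 0}$-submodule, then $V$ is semisimple as a $\q_{\bar 0}$-module, and submodules and quotients of semisimple modules are semisimple; moreover each simple constituent is finite dimensional because it embeds in (resp. is a quotient of) $V$, whose simple constituents are all finite dimensional. Hence $W$ and $V/W$ lie in $\mathcal{I}$. For \textbf{finite tensor products}, it suffices by induction to handle $V \otimes V'$ with $V, V' \in \mathcal{I}$. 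Writing $V = \bigoplus_i V_i$ and $V' = \bigoplus_j V'_j$ with each $V_i, V'_j$ finite dimensional irreducible, we get $V \otimes V' = \bigoplus_{i,j} V_i \otimes V'_j$, so it is enough to show each $V_i \otimes V'_j$ lies in $\mathcal{I}$; but $V_i \otimes V'_j$ is a finite dimensional $\q_{\bar 0}$-module, hence (by reductivity of $\gl(n)$, via Weyl's complete reducibility for the semisimple part together with the fact that the center acts semisimply on a finite dimensional module built from integral-weight irreducibles) decomposes as a finite direct sum of finite dimensional irreducibles. Therefore $V \otimes V' \in \mathcal{I}$.

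The main obstacle, such as it is, is the tensor product step: one must be careful that complete reducibility is being invoked for $\gl(n)$ rather than a genuinely semisimple Lie algebra, so I would note explicitly that although $Z(\gl(n))$ is nonzero, it acts on each $V_i \otimes V'_j$ by a single scalar (the sum of the central characters of $V_i$ and $V'_j$) on each weight component in a way compatible with the $\fsl(n)$-decomposition, so the $\fsl(n)$-isotypic decomposition refines to a decomposition into $\gl(n)$-irreducibles. With that remark in place all four properties follow, and the lemma is proved.
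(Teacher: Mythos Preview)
The paper states this lemma without proof, so there is no argument to compare against. Your proof is correct: direct sums are immediate, submodules and quotients follow from semisimplicity, and for tensor products the decisive point is exactly the one you isolate—on each $V_i \otimes V'_j$ the one-dimensional center of $\gl(n)$ acts by the scalar $c_i + c_j$, so Weyl's theorem for $\fsl(n)$ upgrades to a decomposition into finite dimensional $\gl(n)$-irreducibles.

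Two cosmetic remarks. First, the aside about ``integral weights'' in your opening paragraph is not part of the definition of $\mathcal{I}$ and should simply be deleted; nothing in the argument uses it. Second, the paper literally defines $\mathcal{I}$ as a full subcategory of $\q$-modules rather than $\q_{\bar 0}$-modules; the subsequent usage (``whose restriction to $\q_{\bar 0}$ lies in $\mathcal{I}$'') makes clear this is a slip, and in any case the closure properties depend only on the underlying $\q_{\bar 0}$-structure, so your reading is the right one.
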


Regard $U(\q \otimes A)$ as a right $\q$-module via right multiplication and given a left $\q$-
module $V$ (up to $\Pi$), set
\begin{equation}
P_A(V) := \mathbf{U}(\q\otimes A) \otimes_{\mathbf{U}(\q)} V.
\end{equation}
Then $P_A(V)$ is left $\q \otimes A$-module by left multiplication and we have an isomorphism of vector spaces
\begin{equation}
P_A(V)\cong \mathbf{U}(\q\otimes A_+)\otimes_{\mathbb{C}} V 
\end{equation}
where $A_+$ is a vector space complement to $\C\subseteq A$.  Note that $P_{A}(V)$ is defined up to $\Pi$.
\begin{lemma} 
Let $V$ be a $\q$-module whose restriction to $\q_{\bar 0}$ lies in $\mathcal{I}$. Then $P_A(V) \in \mathcal{I}_{\q\otimes A, \q_{\bar 0}}$. 
\end{lemma}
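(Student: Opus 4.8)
The plan is to pin down the $\q_{\bar 0}$-module structure on $P_A(V)$ and then reduce the whole statement to Lemma~\ref{lemclosed}. First I would record that, as a module over $\q_{\bar 0}=\q_{\bar 0}\otimes\C$ under the adjoint action, $\q\otimes A$ lies in $\mathcal{I}$: since $[x\otimes 1,y\otimes a]=[x,y]\otimes a$, we have $\q\otimes A\cong\q\otimes_{\C}A$ with $\q$ carrying the adjoint action and $A$ a trivial module, so $\q\otimes A$ is a direct sum of $\dim A$ copies of $\q$; and $\q=\q_{\bar 0}\oplus\q_{\bar 1}$ is a finite-dimensional completely reducible $\q_{\bar 0}$-module, because $\q_{\bar 0}\cong\gl(n)$ and $\q_{\bar 1}$ is its adjoint module, each of which decomposes as the (finite-dimensional irreducible) adjoint $\fsl(n)$-module plus a trivial module. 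Hence $\q\otimes A\in\mathcal{I}$.

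Next I would promote this to $\mathbf{U}(\q\otimes A)$. The adjoint action of $\q_{\bar 0}$ on $\q\otimes A$ extends to an action by derivations on the tensor algebra $T(\q\otimes A)=\bigoplus_{k\geq 0}(\q\otimes A)^{\otimes k}$, and this lies in $\mathcal{I}$ by Lemma~\ref{lemclosed}, since each summand is a finite tensor power of $\q\otimes A\in\mathcal{I}$ and $\mathcal{I}$ is closed under arbitrary direct sums. The two-sided ideal defining $\mathbf{U}(\q\otimes A)$ is generated by the elements $u\otimes v-(-1)^{|u||v|}v\otimes u-[u,v]$ with $u,v\in\q\otimes A$, and this generating set is stable under $\ad(z)$ for $z\in\q_{\bar 0}$ by the derivation property together with the Jacobi identity; hence the ideal is $\ad$-stable and the projection $T(\q\otimes A)\to\mathbf{U}(\q\otimes A)$ is $\q_{\bar 0}$-equivariant. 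Since $\mathcal{I}$ is closed under quotients, $\mathbf{U}(\q\otimes A)$ with its adjoint $\q_{\bar 0}$-action lies in $\mathcal{I}$.

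Finally I would exhibit $P_A(V)$ as a $\q_{\bar 0}$-equivariant quotient of an object of $\mathcal{I}$. Equip $\mathbf{U}(\q\otimes A)\otimes_{\C}V$ with the tensor-product $\q_{\bar 0}$-action $z\cdot(u\otimes v)=\ad(z)(u)\otimes v+u\otimes zv$ for $z\in\q_{\bar 0}$; by the previous paragraph and Lemma~\ref{lemclosed} this module, being the tensor product of $\mathbf{U}(\q\otimes A)\in\mathcal{I}$ and $V|_{\q_{\bar 0}}\in\mathcal{I}$, lies in $\mathcal{I}$. The canonical surjection $\mathbf{U}(\q\otimes A)\otimes_{\C}V\twoheadrightarrow \mathbf{U}(\q\otimes A)\otimes_{\mathbf{U}(\q)}V=P_A(V)$ is $\q_{\bar 0}$-equivariant for this structure: writing $zu=\ad(z)(u)+uz$ in $\mathbf{U}(\q\otimes A)$ and using $z\in\q\subseteq\mathbf{U}(\q)$ to move $z$ across the balanced tensor gives $z\cdot(u\otimes_{\mathbf{U}(\q)}v)=\ad(z)(u)\otimes_{\mathbf{U}(\q)}v+u\otimes_{\mathbf{U}(\q)}zv$. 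Therefore $P_A(V)$ is a quotient of an object of $\mathcal{I}$, hence lies in $\mathcal{I}$; that is, $P_A(V)\in\mathcal{I}_{\q\otimes A,\q_{\bar 0}}$.

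The step I expect to require the most care is the middle one: verifying that the defining relations in $T(\q\otimes A)$ form an $\ad\q_{\bar 0}$-stable set, so that $\mathbf{U}(\q\otimes A)$ genuinely inherits a well-behaved $\q_{\bar 0}$-module structure, and checking that all the direct sums occurring in $T(\q\otimes A)=\bigoplus_{k}\bigoplus_{i_1,\dots,i_k}\q^{\otimes k}$ really are direct sums of \emph{finite-dimensional} irreducibles (which is precisely what Lemma~\ref{lemclosed} yields once one applies it to finite tensor powers and arbitrary direct sums). Alternatively one could run the same bookkeeping through the vector-space isomorphism $P_A(V)\cong\mathbf{U}(\q\otimes A_+)\otimes_{\C}V$ recorded above, taking $A_+$ to be the kernel of an augmentation $A\to\C$, which exists because $A$ is finitely generated, so that $\q\otimes A_+$ is an honest Lie subalgebra; the argument is essentially the same.
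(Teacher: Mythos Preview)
Your proof is correct and follows essentially the same route as the paper: show $\q\otimes A\in\mathcal{I}$ as an adjoint $\q_{\bar 0}$-module, deduce $\mathbf{U}(\q\otimes A)\in\mathcal{I}$ via closure under tensor powers, direct sums and quotients, then realise $P_A(V)$ as a $\q_{\bar 0}$-equivariant quotient of $\mathbf{U}(\q\otimes A)\otimes_{\C}V$. The paper's argument is identical in outline; you are simply more explicit about the $\ad$-stability of the defining ideal of the enveloping algebra, which the paper leaves implicit in its appeal to Lemma~\ref{lemclosed}.
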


\begin{proof}
Note that $\q$ is finitely semisimple, that is, it is isomorphic to direct sums of irreducible finite dimensional $\q_{\bar 0}$-modules via adjoint representation. So $\q \in \mathcal{I}$ and $\q\otimes A \cong \q^{\oplus \dim (A)}$ as $\q_{\bar{0}}$-modules.  By Lemma \ref{lemclosed}, $\q\otimes A$ is a finitely semisimple $\q_{\bar 0}$-module, that is, it is isomorphic to direct sums of irreducible finite dimensional $\q_{\bar 0}$ modules. Again, by Lemma \ref{lemclosed}, as $\mathcal{I}$ is closed under finite tensor product and arbitrary direct sum so $\mathbf{U}(\q\otimes A)$ is finitely semisimple $\q_{\bar 0}$-module. Also as $V  \in \mathcal{I}$, we have $\mathbf{U}(\q\otimes A)\otimes_{\mathbb{C}} V$ is a finitely semisimple   $\q_{\bar 0}$-module. Consider the map 
\begin{equation}\label{eq3.2}
\mathbf{U}(\q\otimes A)\otimes_{\mathbb{C}} V \to \mathbf{U}(\q\otimes A)\otimes_\mathbf{U(\q)} V, \quad u\otimes v \mapsto u\otimes v.
\end{equation} 
For every $u\in \mathbf{U}(\q\otimes A), \;v\in V$ and every homogenenous element $x\in \q$, we have 
\[x\cdot (u\otimes v) = xu \otimes v = ([x, u] + (-1)^{|x| |u|} u x)\otimes v = [x, u]\otimes v + (-1)^{ |x|  |u|} u\otimes x\cdot v.\] 
Hence, the map in \eqref{eq3.2} is a surjective homomorphism of $\q$-modules. This shows that $P_A(V)$ is a quotient of $\mathbf{U}(\q\otimes A)\otimes_{\mathbb{C}} V$. Hence the lemma follows from Lemma \ref{lemclosed}.
\end{proof}

\begin{prop}
If $\lambda \in \Lambda^+$, then $P_A(\bar{L}(\lambda))$ (up to $\Pi$) is generated as a left $\mathbf{U}(\q\otimes A)$-module by $L(\lambda)_{\lambda}$ satisfying the following relations:
\begin{align}\label{eq14}
\begin{split}
\n^+p_{\lambda} = 0,\;\;\; &hp_{\lambda} = \lambda(h) p_{\lambda},\;\quad f_{i}^{\lambda(h_{i})+1}p_{\lambda}  =0, \quad (p_{\lambda} :=1\otimes k_{\lambda})\\
& \;\; \mbox{for all}\quad k_{\lambda}\in  L(\lambda)_{\lambda},  h\in \h_{\bar{0}},\;\; \alpha_{i} \in \Delta(\q_{\bar{0}}), i \in I.
\end{split}
\end{align}
\end{prop}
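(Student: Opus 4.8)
The plan is to establish the relations directly and then argue that they are a complete set of defining relations, exploiting that $\bar L(\lambda)$ has an analogous presentation as a $\q$-module. First I would observe that $P_A(\bar L(\lambda))$ is generated as a $\mathbf{U}(\q\otimes A)$-module by $1\otimes \bar L(\lambda)$ (by the definition of the induction product and the vector-space isomorphism $P_A(\bar L(\lambda))\cong \mathbf{U}(\q\otimes A_+)\otimes_\C \bar L(\lambda)$), and since $\bar L(\lambda)$ is generated over $\q$ by its highest weight space $L(\lambda)_\lambda$, the module $P_A(\bar L(\lambda))$ is generated over $\mathbf{U}(\q\otimes A)$ by $p_\lambda := 1\otimes k_\lambda$, $k_\lambda\in L(\lambda)_\lambda$. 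That the listed relations \eqref{eq14} hold is immediate: for $x\in\q$ the action of $x$ on $1\otimes k_\lambda$ is $1\otimes x k_\lambda$ because $\q\subseteq\q\otimes A$ acts through $\mathbf{U}(\q)$ on the second tensor factor, so $\n^+p_\lambda=0$, $hp_\lambda=\lambda(h)p_\lambda$, and $f_i^{\lambda(h_i)+1}p_\lambda=0$ follow from the corresponding defining relations of $\bar L(\lambda)$ in \eqref{eq9}.

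Next I would prove the converse, i.e.\ universality: if $M$ is any $\q\otimes A$-module generated by a subspace $M_\lambda'$ on which the relations \eqref{eq14} hold, then there is a surjection $P_A(\bar L(\lambda))\to M$ (up to $\Pi$). The relations $\n^+M_\lambda'=0$ and $hm=\lambda(h)m$ make $M_\lambda'$ into a $\bb_+$-module, and by Proposition \ref{prop1} together with Remark \ref{remprop12} the subspace $M_\lambda'$ carries the $\mathrm{Cliff}(\lambda)$-structure and so, after restricting to a $\q$-submodule, $\mathbf{U}(\q)M_\lambda'$ is a highest weight $\q$-module of highest weight $\lambda$ in which $f_i^{\lambda(h_i)+1}$ kills the highest weight space. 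By the defining property of $\bar L(\lambda)$ there is a $\q$-module map $\bar L(\lambda)\to \mathbf{U}(\q)M_\lambda'\subseteq M$ (up to $\Pi$) sending $L(\lambda)_\lambda$ onto $M_\lambda'$; by Frobenius reciprocity / the universal property of $P_A(-)=\mathbf{U}(\q\otimes A)\otimes_{\mathbf{U}(\q)}(-)$ this extends to a $\q\otimes A$-module map $P_A(\bar L(\lambda))\to M$, which is surjective since its image contains the generators $M_\lambda'$. Applying this to $M=P_A(\bar L(\lambda))$ itself with $M'_\lambda$ the span of the $p_\lambda$ shows that the quotient of $P_A(\bar L(\lambda))$ by the submodule generated by \eqref{eq14} maps isomorphically back, i.e.\ \eqref{eq14} is a full set of relations.

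The one genuinely delicate point, and the step I expect to be the main obstacle, is the bookkeeping with the parity-reversing functor $\Pi$: $\bar L(\lambda)$ and $P_A(\bar L(\lambda))$ are each defined only up to $\Pi$, and the $\mathrm{Cliff}(\lambda)$-module $\mathbf{v}(\lambda)$ determining the highest weight space is likewise fixed only up to $\Pi$, so every isomorphism above must be stated ``up to $\Pi$'' and one must check the choices can be made compatibly on both sides. Concretely, one fixes one choice of $\mathbf{v}(\lambda)$, hence of $\bar L(\lambda)$, hence of $P_A(\bar L(\lambda))$, and verifies that the map $\bar L(\lambda)_\lambda\to M'_\lambda$ of $\mathrm{Cliff}(\lambda)$-modules exists for exactly one of the two parities — this is precisely what Remark \ref{remprop12} and Proposition \ref{prop1}(b) guarantee. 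A second, more routine, point is the use of the PBW/triangular decomposition \eqref{eq8} to see that applying $\mathbf{U}(\n^-\otimes A)$ to $p_\lambda$ already spans $P_A(\bar L(\lambda))$, so that no further relations among the $p_\lambda$ beyond those inherited from $\bar L(\lambda)$ are needed; this follows formally from $P_A(V)\cong \mathbf{U}(\q\otimes A_+)\otimes_\C V$ and does not require new input.
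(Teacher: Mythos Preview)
Your proposal is correct and follows essentially the same strategy as the paper: verify that the relations \eqref{eq14} hold for $p_\lambda=1\otimes k_\lambda$ because they hold for $k_\lambda$ in $\bar L(\lambda)$, then establish universality by taking an arbitrary module $M$ generated by an irreducible $\bb_+$-module satisfying \eqref{eq14}, producing a $\q$-map $\bar L(\lambda)\to M$ via the presentation of $\bar L(\lambda)$ and Remark~\ref{remprop12}, and extending it to a $\q\otimes A$-map $P_A(\bar L(\lambda))\to M$ by the universal property of induction. The paper phrases this last step as constructing mutually inverse surjections $\phi\colon M\to P_A(\bar L(\lambda))$ and $\psi\colon P_A(\bar L(\lambda))\to M$, whereas you invoke Frobenius reciprocity directly; these are the same argument. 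One small wording issue: in your final sentence you say ``applying this to $M=P_A(\bar L(\lambda))$ itself,'' but what you actually need is to apply it to the abstract module $N$ presented by generators and relations \eqref{eq14}, obtaining a surjection $P_A(\bar L(\lambda))\to N$ inverse to the obvious surjection $N\to P_A(\bar L(\lambda))$; this is clearly what you intend.
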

\begin{proof}
Note that $p_{\lambda} = 1\otimes k_{\lambda} \in P_A(\bar{L}(\lambda))$ when $k_{\lambda}\in \bar{L}(\lambda)$. Since $k_{\lambda}$ satisfies the relation in \eqref{eq9}, $1\otimes k_{\lambda}$ satisfies relations \eqref{eq14}. We have to check these are all the relations. To do this, suppose that $M$ is the highest weight $\q\otimes A$-module with highest weight $\lambda$, generated by an irreducible $\mathfrak{b}_{+}$-module $\mathfrak{m}$ such that 
\begin{align}\label{eq14a}
\begin{split}
\n^+ m = 0,\;\;\; &hm = \lambda(h) m,\;\quad f_{i}^{\lambda(h_{i})+1}m  =0, \;\; \mbox{for all}\quad m\in  \mathfrak{m},  h\in \h_{\bar{0}},\;\; i\in I.
\end{split}
\end{align}
By Remark \ref{remprop12}, the dimension of the highest weight space $P_A(\bar{L}(\lambda)_{\lambda})$ is equal to the dimension of the highest weight space $\mathfrak{m}$. Then we have a surjective homomorphism (up to $\Pi$) of $\q\otimes A$-modules $\phi: M \longrightarrow P_A(\bar{L}(\lambda))$  induced by $\phi(\mathfrak{m})  = P_A(\bar{L}(\lambda)_{\lambda})$.

From \eqref{eq14}, let $m \in\mathfrak{m}$ generates $\q$-submodule $M'$ of $M$ which is isomorphic to $\bar{L}(\lambda)$. Thus, the map  $\psi: P_A(\bar{L}(\lambda)) \longrightarrow M$ induced by $\bar{L}(\lambda)\to M'$ is a surjective homomorphism (up to $\Pi$). Since $\phi =\psi^{-1}$, we have $M \cong  P_A(\bar{L}(\lambda))$ or $M \cong  \Pi\left(P_A(\bar{L}(\lambda))\right)$.
\end{proof}
For $\nu \in \Lambda^+$ and $M \in \mathcal{I}_{\q\otimes A, \q_{\bar 0}}$, let $M^{\nu}$ be the unique maximal $\q\otimes A$-module quotient of M satisfying
\[\mbox{wt}(M^{\nu})\subset \nu - Q^+,\] or equivalently, 

\begin{equation}\label{eq15}
M^{\nu} := M/\sum_{\mu \not \in \nu -Q^{+}} \mathbf{U}(\q\otimes A) M_{\mu}.
\end{equation}
 Let $\mathcal{I}_{\q\otimes A, \q_{\bar 0}}^{\nu}$ be the full subcategory of $\mathcal{I}_{\q\otimes A, \q_{\bar 0}}$ whose objects are the left $\mathbf{U}(\q\otimes A)$-modules $M \in \mathcal{I}_{\q\otimes A, \q_{\bar 0}}$ such that $M^{\nu} = M$.

\begin{defn}[Global Weyl module]
Let $\lambda \in \Lambda^+$. We define the global Weyl module (up to $\Pi$)  associated to $\lambda  \in \Lambda^+$ to be 
\[W_{A}(\lambda):= P_A(\bar{L}(\lambda))^{\lambda}.\]
\end{defn}
From
\begin{align*}
&\bar{L}(\lambda)\longrightarrow P_A(\bar{L}(\lambda)) \longrightarrow P_A(\bar{L}(\lambda))/\sum_{\mu \not \in \lambda -Q^{+}} \mathbf{U}(\q\otimes A) M_{\mu},
\end{align*} we note that  $w_{\lambda}$ is the image of $k_{\lambda}$ in $W_{A}(\lambda)$.

The next result gives a description of global Weyl modules by generators and relations. 
\begin{prop}\label{prop3.5}
For $\lambda \in \Lambda^+$, the global Weyl module $W_A(\lambda)$ (up to $\Pi$)  is generated by $W_{A}(\lambda)_{\lambda}$ with defining relations 
\begin{equation}\label{eq20}
(\n^+ \otimes A)w_{\lambda} = 0, \quad hw_{\lambda} = \lambda(h) w_{\lambda},\quad f_{i}^{\lambda(h_{i})+1}w_{\lambda} =0, \quad \mbox{for all}\;\; h\in \h_{\bar{0}},\;\; \alpha_{i}\in \Delta(\q_{\bar{0}})
\end{equation}
and $w_{\lambda}$  in $W_{A}(\lambda)_{\lambda}$. 
\end{prop}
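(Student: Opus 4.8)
The plan is to identify $W_A(\lambda)$, up to $\Pi$, with the $\q\otimes A$-module $\widetilde{W}$ that is universally presented by a generating highest-weight space $\widetilde{W}_\lambda$ --- an irreducible $\mbox{Cliff}(\lambda)$-module, equivalently an irreducible $\bb_+$-module of highest weight $\lambda$, taken up to $\Pi$ --- subject to the relations \eqref{eq20}. Proving the proposition then amounts to constructing mutually inverse surjections $\pi\colon\widetilde{W}\to W_A(\lambda)$ and $\rho\colon W_A(\lambda)\to\widetilde{W}$, all understood up to $\Pi$.

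For $\pi$, first check that $w_\lambda$ satisfies \eqref{eq20} inside $W_A(\lambda)=P_A(\bar L(\lambda))^\lambda$. The relations $hw_\lambda=\lambda(h)w_\lambda$ and $f_i^{\lambda(h_i)+1}w_\lambda=0$ are inherited from the presentation \eqref{eq14} of $P_A(\bar L(\lambda))$, since they persist in any quotient. For the relation $(\n^+\otimes A)w_\lambda=0$, note that $(\n^+\otimes A)p_\lambda$ is spanned by vectors of weight $\lambda+\beta$ with $\beta\in\Phi^+\subseteq Q^+\setminus\{0\}$, so $\lambda+\beta\notin\lambda-Q^+$ and these vectors lie in $\sum_{\mu\notin\lambda-Q^+}\mathbf{U}(\q\otimes A)\,P_A(\bar L(\lambda))_\mu$; hence they vanish in $W_A(\lambda)$. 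Using Remark \ref{remprop12} to match the dimensions of the generating highest-weight spaces, the universal property of $\widetilde{W}$ then yields a surjection $\pi\colon\widetilde{W}\to W_A(\lambda)$ up to $\Pi$, carrying $\widetilde{W}_\lambda$ onto $W_A(\lambda)_\lambda$.

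For $\rho$, I would show $\widetilde{W}$ is a quotient of $P_A(\bar L(\lambda))$ lying in $\mathcal{I}_{\q\otimes A,\q_{\bar 0}}^{\lambda}$. Restricting the relations \eqref{eq20} to $\q=\q\otimes\C$ shows that the $\q$-submodule $\mathbf{U}(\q)\widetilde{W}_\lambda$ of $\widetilde{W}$ is generated by $\widetilde{W}_\lambda$ subject to exactly the defining relations \eqref{eq9} of $\bar L(\lambda)$, so there is a surjection $\bar L(\lambda)\twoheadrightarrow\mathbf{U}(\q)\widetilde{W}_\lambda$ up to $\Pi$. Applying the functor $P_A$ and composing with the multiplication map $P_A(\mathbf{U}(\q)\widetilde{W}_\lambda)\to\widetilde{W}$ --- surjective because $\widetilde{W}=\mathbf{U}(\q\otimes A)\widetilde{W}_\lambda$ --- produces a surjection $P_A(\bar L(\lambda))\twoheadrightarrow\widetilde{W}$. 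From the triangular decomposition $\mathbf{U}(\q\otimes A)\cong\mathbf{U}(\n^-\otimes A)\otimes\mathbf{U}(\h\otimes A)\otimes\mathbf{U}(\n^+\otimes A)$ together with $(\n^+\otimes A)\widetilde{W}_\lambda=0$ and the fact that $\h\otimes A\subseteq(\q\otimes A)_0$ preserves $\h_{\bar 0}$-weights, one gets $\widetilde{W}=\mathbf{U}(\n^-\otimes A)\mathbf{U}(\h\otimes A)\widetilde{W}_\lambda$, whence $\wt(\widetilde{W})\subseteq\lambda-Q^+$; and $\widetilde{W}\in\mathcal{I}_{\q\otimes A,\q_{\bar 0}}$ by Lemma \ref{lemclosed}, being a quotient of $P_A(\bar L(\lambda))$. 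Therefore $\widetilde{W}=\widetilde{W}^\lambda$, so the surjection $P_A(\bar L(\lambda))\twoheadrightarrow\widetilde{W}$ annihilates $\sum_{\mu\notin\lambda-Q^+}\mathbf{U}(\q\otimes A)P_A(\bar L(\lambda))_\mu$ and thus factors through $W_A(\lambda)=P_A(\bar L(\lambda))^\lambda$, giving $\rho\colon W_A(\lambda)\to\widetilde{W}$ up to $\Pi$.

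Finally, I would observe that $\pi$ and $\rho$ both carry the generating highest-weight space to the generating highest-weight space compatibly with its $\mbox{Cliff}(\lambda)$-action, and that $W_A(\lambda)_\lambda=\mathbf{U}(\h\otimes A)w_\lambda$ generates $W_A(\lambda)$ (and similarly for $\widetilde{W}$). Hence the composites $\pi\circ\rho$ and $\rho\circ\pi$ are $\q\otimes A$-module endomorphisms fixing their generating highest-weight spaces pointwise (the two occurrences of $\Pi$ cancelling), so they are the respective identity maps; thus $\pi$ and $\rho$ are mutually inverse isomorphisms up to $\Pi$, which is the claim. I expect the delicate point to be the construction of $\rho$: one must invoke functoriality of $P_A$ to realize $\widetilde{W}$ as a quotient of $P_A(\bar L(\lambda))$, verify that it lands in the subcategory $\mathcal{I}_{\q\otimes A,\q_{\bar 0}}^{\lambda}$ so that the quotient map descends to $W_A(\lambda)$, and keep careful track of the parity-change functor $\Pi$ throughout.
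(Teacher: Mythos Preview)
Your proposal is correct and follows essentially the same approach as the paper's proof: both verify that the relations \eqref{eq20} hold in $W_A(\lambda)$ (using the weight bound $\wt(W_A(\lambda))\subseteq\lambda-Q^+$ for the first relation), then introduce the module $\widetilde{W}$ (the paper calls it $W'(\lambda)$) presented by those relations, exhibit a surjection $P_A(\bar L(\lambda))\twoheadrightarrow\widetilde{W}$ via the $\q$-submodule generated by the highest-weight space, and finally use the weight bound on $\widetilde{W}$ to factor this through $W_A(\lambda)$, yielding mutually inverse maps. Your version is somewhat more explicit about the functoriality of $P_A$, the verification that $\widetilde{W}\in\mathcal{I}_{\q\otimes A,\q_{\bar 0}}^{\lambda}$, and the check that the composites are identities, but the logical structure is the same.
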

\begin{proof}
Let for any $k_{\lambda}\in \bar{L}(\lambda)_{\lambda}$, $w_{\lambda}$ is the image of $k_{\lambda}$ in $W_{A}(\lambda)$. Note that $(\q_{\alpha}\otimes A)V_{\mu}\subseteq V_{\mu+ \alpha}$ for all $\alpha \in \Delta, \mu\in \h_{\bar{0}}^{*}$. Since the weights of  $W_A(\lambda)$ lie in $\lambda - Q^{+}$, it follows that $(\n^+ \otimes A)w_{\lambda} = 0$. The remaining relations are satisfied by $w_{\lambda}$ since they are satisfied by $k_{\lambda}$. To prove that these are the only relations, let $W'(\lambda)$ be the highest weight module generated by an irreducible $\mathfrak{b}_{+}$-module $\mathfrak{m}$ with relations \begin{equation}\label{eq16a}
(\n^+ \otimes A)m = 0, \quad hm = \lambda(h) m,\quad f_{i}^{\lambda(h_{i})+1}m =0, \quad \mbox{for all}\;\; m \in \mathfrak{m}, h\in \h_{\bar{0}},\;\; i\in I.
\end{equation}
Then we have a surjective homomorphism $\phi: W'(\lambda) \to W_{A}(\lambda)$ induced by $\phi(\mathfrak{m}) = W_{A}(\lambda)_{\lambda}$. Note that the relations \eqref{eq16a} implies $W'(\lambda)$ is a weight module. So  $m \in\mathfrak{m}$  generates $\q$-submodule $W''$ of $W'(\lambda)$ which is isomorphic to $\bar{L}(\lambda)$. Thus, the map  $\psi: P_A(\bar{L}(\lambda)) \longrightarrow W'$ induced by $\bar{L}(\lambda)\to W''$ is a surjective homomorphism. Further, $\q$-weights of $W'(\lambda)$ are bounded above by $\lambda$, it follows that $\psi$ induces a map $W_{A}(\lambda) \to W'(\lambda)$ inverse to $\phi$.
\end{proof}

\begin{thm}\label{thm3.6}
Any global Weyl module with highest weight $\lambda$ in the category $I(\q\otimes A, \q_{\bar{0}})$  is isomorphic to $W_A(\lambda)$ or $\Pi(W_A(\lambda))$. Furthermore, if any object $V(\lambda)\in I(\q\otimes A, \q_{\bar{0}})$ is generated by an irreducible $\mathfrak{b}_{+}$-module $\mathbf{v}$ of weight $\lambda$, then there exists a surjective homomorphism form $W_A(\lambda)$ to $V(\lambda)$ (up to $\Pi$). 
\end{thm}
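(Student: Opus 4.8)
The plan is to follow the proof of Proposition~\ref{prop4} almost verbatim, replacing $\q$ by $\q\otimes A$, the module $\bar{L}(\lambda)$ by the global Weyl module $W_A(\lambda)=P_A(\bar{L}(\lambda))^{\lambda}$, and Proposition~\ref{prop4a} by its analogue for modules in $\mathcal{I}_{\q\otimes A,\q_{\bar 0}}$. Here $\lambda\in\Lambda^{+}$, so that $W_A(\lambda)$ is defined. The one genuinely new ingredient is to show that the generator of $V(\lambda)$ is annihilated not merely by $\n^+$ but by all of $\n^+\otimes A$.

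First I would dispose of the well-definedness statement. By construction $W_A(\lambda)$ is built out of $\bar{L}(\lambda)$, hence out of the finite dimensional irreducible $\bb_{+}$-module $\mathbf{v}(\lambda)$, which by Proposition~\ref{prop1} is pinned down by $\lambda$ only up to the functor $\Pi$; since $\Pi$ leaves all $\h_{\bar 0}$-weights unchanged, the constructions $P_A(-)$ and $(-)^{\lambda}$ respect $\Pi$, so a different choice of $\mathbf{v}(\lambda)$ produces $\Pi(W_A(\lambda))$. Combined with the generators-and-relations description of Proposition~\ref{prop3.5}, any module in $\mathcal{I}_{\q\otimes A,\q_{\bar 0}}$ that is generated by an irreducible $\bb_{+}$-submodule of weight $\lambda$ subject to \eqref{eq20} is therefore isomorphic to $W_A(\lambda)$ or to $\Pi(W_A(\lambda))$, which is the first assertion.

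For the surjection, let $V(\lambda)\in\mathcal{I}_{\q\otimes A,\q_{\bar 0}}$ be generated by an irreducible $\bb_{+}$-module $\mathbf{v}$ of weight $\lambda$; I would check that $\mathbf{v}$ satisfies all three relations in \eqref{eq20}. The $h$-relation is immediate. For the $\n^+\otimes A$-relation, observe that the restriction of $V(\lambda)$ to $\q_{\bar 0}$ lies in $\mathcal{I}$, so $V(\lambda)$ is in particular a semisimple $\h_{\bar 0}$-module and hence a weight module; as it is generated by vectors of weight $\lambda$ and $\q\otimes A=\bigoplus_{\alpha\in\Phi\cup\{0\}}(\q_{\alpha}\otimes A)$ shifts $\h_{\bar 0}$-weights by roots, we get $\wt(V(\lambda))\subseteq\lambda-Q^{+}$, whence $V(\lambda)_{\lambda+\beta}=0$ for every $\beta\in\Phi^{+}$ and therefore $(\q_{\beta}\otimes A)\mathbf{v}\subseteq V(\lambda)_{\lambda+\beta}=0$, i.e. $(\n^+\otimes A)\mathbf{v}=0$. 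For the $f_i$-relation, fix $v\in\mathbf{v}$ and $i\in I$: the $\q_{\bar 0}$-submodule $\mathbf{U}(\q_{\bar 0})v$ is finite dimensional (it is an object of $\mathcal{I}$) and $v$ is a $\q_{\bar 0}$-highest weight vector of weight $\lambda$, so $\lambda\in\Lambda_{\bar 0}^{+}$ and the $\mathfrak{sl}(2)$-computation from the proof of Proposition~\ref{prop4a}, applied inside this finite dimensional module, forces $f_{i}^{\lambda(h_{i})+1}v=0$.

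Finally I would invoke the universal property. By Proposition~\ref{prop1} and Remark~\ref{remprop12} the irreducible $\bb_{+}$-module $\mathbf{v}$ and the highest weight space $W_A(\lambda)_{\lambda}$ (which equals $\mathbf{v}(\lambda)$ up to $\Pi$) are isomorphic up to $\Pi$, since they carry the weight $\lambda$ and irreducible $\mbox{Cliff}(\lambda)$-modules all have the same dimension; replacing $V(\lambda)$ by $\Pi(V(\lambda))$ if necessary, fix an isomorphism of $\bb_{+}$-modules $W_A(\lambda)_{\lambda}\cong\mathbf{v}$. By Proposition~\ref{prop3.5} the module $W_A(\lambda)$ is universal among $\q\otimes A$-modules generated by a copy of $\mathbf{v}(\lambda)$ satisfying \eqref{eq20}, and we have just verified that $\mathbf{v}\subseteq V(\lambda)$ is such a copy and generates $V(\lambda)$; hence the chosen isomorphism of highest weight spaces extends to a homomorphism of $\q\otimes A$-modules $W_A(\lambda)\to V(\lambda)$, surjective because its image contains $\mathbf{v}$. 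Undoing the possible $\Pi$-twist gives the statement up to $\Pi$. I expect the main obstacle to be precisely the parity bookkeeping together with the verification that $V(\lambda)$ is a genuine weight module with weights in $\lambda-Q^{+}$: this is what upgrades $\n^+\mathbf{v}=0$ to $(\n^+\otimes A)\mathbf{v}=0$, and it is the only place where membership in $\mathcal{I}_{\q\otimes A,\q_{\bar 0}}$ (rather than finite dimensionality of $V$ as in Proposition~\ref{prop4}) is really used.
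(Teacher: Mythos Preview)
Your overall structure matches the paper's: verify the three relations \eqref{eq20} for the generating $\bb_+$-module $\mathbf{v}$, then invoke Proposition~\ref{prop3.5} to produce the surjection, and handle uniqueness via the $\Pi$-ambiguity in $\mathbf{v}(\lambda)$. The treatment of the $h$-relation and the $f_i$-relation is fine and essentially identical to the paper's (the paper cites Proposition~\ref{prop4a}; you spell out the underlying $\mathfrak{sl}(2)$-argument inside the finite dimensional $\q_{\bar 0}$-submodule, which is the same content).

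The genuine gap is in your derivation of $(\n^+\otimes A)\mathbf{v}=0$. You argue: $V(\lambda)$ is a weight module, it is generated by vectors of weight $\lambda$, and root vectors shift weights by roots, ``we get $\wt(V(\lambda))\subseteq\lambda-Q^{+}$''. That last inclusion does not follow. From the data you list one only obtains $\wt(V(\lambda))\subseteq\lambda+Q$: elements of $\n^+\otimes A$ raise weights, and nothing you have said prevents $(e_i\otimes a)v$ from being a nonzero vector of weight $\lambda+\alpha_i$. The conclusion $\wt(V(\lambda))\subseteq\lambda-Q^{+}$ is exactly equivalent to $(\n^+\otimes A)\mathbf{v}=0$, so the argument is circular. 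Membership in $\mathcal{I}_{\q\otimes A,\q_{\bar 0}}$ by itself does not rule out weights above $\lambda$; it only guarantees that every vector lies in some finite dimensional $\q_{\bar 0}$-submodule.

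The paper does not attempt to prove this relation at all: in the proof it reads the hypothesis as ``$V(\lambda)$ is a highest weight $\q\otimes A$-module with highest weight $\lambda$'', so that $(\n^+\otimes A)v=0$ for $v\in\mathbf{v}$ holds \emph{by definition}. In other words, the phrase ``generated by an irreducible $\bb_+$-module $\mathbf{v}$ of weight $\lambda$'' in the statement is meant in the highest-weight sense for $\q\otimes A$, not merely for $\q$. Once you adopt that reading, your ``one genuinely new ingredient'' disappears and the remainder of your argument goes through exactly as in the paper. For the uniqueness clause the paper argues slightly differently---it takes a second object $W_A'(\lambda)$ with the same universal property, obtains surjections in both directions via Proposition~\ref{prop3.5}, and concludes they are mutual inverses---but this is equivalent to what you wrote.
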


\begin{proof}
Let $V(\lambda)\in I(\q\otimes A, \q_{\bar{0}})$ be highest weight $\q\otimes A$-module with highest weight $\lambda$ is generated by an irreducible $\mathfrak{b}_{}+$-module $\mathbf{v}$. Then by definition 
\[(\n^+ \otimes A)v= 0, \quad hv= \lambda(h) v, \quad \mbox{for all}\;\; h\in \h_{\bar{0}},\;\; v\in \mathbf{v}.\]
Since the $\q_{\bar{0}}$-module generated by $\mathbf{v}$ is finite dimensional, then by Proposition \ref{prop4a}, $f_{i}^{\lambda(h_{i})+1}v =0$ for all $v\in \mathbf{v}$ and $ \alpha_{i} \in \Delta(\q_{\bar{0}}), i\in I$. Thus, by Proposition \ref{prop3.5}, we have a surjective homomorphism $W_{A}(\lambda)\to V(\lambda)$ induced by $W_A(\lambda)_{\lambda} \to \mathbf{v}$ (up to $\Pi$).

Suppose $W_A'(\lambda)$ is another object in $I(\q\otimes A, \q_{\bar{0}})$ that is generated  by an irreducible $\mathfrak{b}_{+}$-module $\mathbf{m}$ with highest weight $\lambda$ and admits a surjective homomorphism to any object of  $I(\q\otimes A, \q_{\bar{0}})$ which is also generated by  highest weight vectors of weight $\lambda$. In particular, we have a surjective homomorphism $\phi: W_A'(\lambda)\to W_A(\lambda)$. It follows from PBW theorem that $W_A(\lambda)_{\lambda} = \mathbf{U}(\h \otimes A_{+})\otimes_{\C}\mathbf{m}$. Hence the elements of this weight space that generate $W_A(\lambda)$ are the $\C$-multiples of $m$ for all $m\in\mathbf{m}$. Thus, we have $\phi(\mathbf{m}) = W_A(\lambda)_{\lambda}$. Now the relation \eqref{eq20} hold for all $m\in \mathbf{m}$. Thus, there exists a homomorphism $\psi: W_A(\lambda)\to W_A'(\lambda)$ induced by $W_A(\lambda)_{\lambda} \to \mathbf{m}$ (up to $\Pi$) and hence $W_A(\lambda)\cong W_A'(\lambda)$ or $\Pi(W_A(\lambda)) \cong W_A'(\lambda)$.
\end{proof}

\section{Local Weyl modules}
An ideal $I$ of $A$ is said to be of finite co-dimension if $\dim A/I$ is finite. Let
 \[\mathcal{L}(\h\otimes A) = \{\psi\in (\h_{\bar 0} \otimes A)^{*}\mid \psi(\h_{\bar 0} \otimes I) = 0, \mbox{for some finite co-dimensional ideal}\; I \subseteq A\}.\]
For any $\psi \in \mathcal{L}(\h\otimes A)$, there exists  unique, up to $\Pi$, simple finite dimensional $\h\otimes A$-module $H(\psi)$ such that $xv = \psi(x)v$, for all $x\in \h_{\bar{0}}\otimes A$ and $v\in H(\psi)$ (see \cite[Th. 4.3]{CMS16}). Define an action of $\bb \otimes A$ on $H(\psi)$ by $\n^+\otimes A$ to act by zero. Then consider the induced module 
\[\bar{V}(\psi):=\mathbf{U}(\q\otimes A) \otimes_{\mathbf{U}(\bb \otimes A)}H(\psi),\]
which is a highest weight module. Notice that $\bar{V}(\psi)$ is defined up to the parity reversing functor $\Pi$. Further, a submodule of $\bar{V}(\psi)$ is proper if and only if its intersection with $H(\psi)$ is zero. Moreover, any $\q\otimes A$-submodule of a weight module is also a weight module. Hence, if $W\subset \bar{V}(\psi)$ is proper $\q\otimes A$-submodule, then 
\[W= \bigoplus_{\mu\neq \lambda}W_{\mu}, \quad \mbox{where}\;\;\lambda= \psi \mid_{\h_{\bar{0}}}.\]
Therefore, $\bar{V}(\psi)$ has a unique maximal proper submodule $N(\psi)$ 
    \[V(\psi) = \bar{V}(\psi)/N(\psi)\] is an irreducible highest weight $\q\otimes A$-module. So, every finite dimensional irreducible $\q\otimes A$-module is isomorphic to $V(\psi)$ for some $\psi\in \mathcal{L}(\h\otimes A)$(see \cite[Proposition 5.4]{CMS16}). Note that the highest weight space of $V(\psi) \cong_{\h\otimes A} H(\psi)$ or  $V(\psi) \cong_{\h\otimes A} \Pi(H(\psi))$.
    
\begin{defn}\label{def4.1}
Let $ \psi \in  \mathcal{L}(\h\otimes A)$ such that $\lambda=\psi \mid_{\h_{\bar{0}}} \in \Lambda^{+}$. We define the {\em local Weyl module} $W_A^{\mbox{loc}}(\psi)$ associated to $\psi$ upto $\Pi$ to be the $\q \otimes A$-module generated by $H(\psi)$ with defining relations

\[(\n^{+} \otimes A)w_{\psi} = 0, \quad xw_{\psi} = \psi(x)w_{\psi},\quad f_{i}^{\lambda(h_{i})+1}w_{\psi} =0,  \quad \mbox{for all} \quad w_{\psi} \in H({\psi}), x\in \h_{\bar{0}}\otimes A,\;\; i\in I.\]

\end{defn}

A $\q \otimes A$-module generated by  $H({\psi})$ is called {\em highest map-weight module} with {\em highest map-weight} $\psi$ if 
\[(\n^{+} \otimes A)w_{\psi} = 0, \quad xw_{\psi} = \psi(x)w_{\psi}, \quad \mbox{for all}\quad w_{\psi} \in H({\psi}), x\in \h_{\bar{0}}\otimes A.\] A vector $w_{\psi} \in H(\psi)$ is called {\em highest map-weight vector} of highest map-weight $\psi$.

Recall the even part of queer Lie superalgebra $\q(=\q_{\bar{0}}\oplus \q_{\bar{1}})$ is isomorphic to $\mathfrak{gl}(n+1)$.  For each $\alpha \in \Phi_{\bar{0}}^{+}$ we have an $\mathfrak{sl}(2)$-triple $x_{\alpha}, y_{\alpha}, h_{\alpha}$.
\begin{lemma}
Suppose $\psi \in \mathcal{L}(\h\otimes A)$ such that $\lambda= \psi \mid_{\h_{\bar{0}}} \in \Lambda^{+}$. Consider $\q \otimes A$-module generated by an irreducible module $H({\psi})$. If $\alpha_i \in \Phi_{\bar{0}}^{+},$ then $f_{i}^{\lambda(h_{i})+1}w_{\psi}=0$ for all $w_{\psi} \in H({\psi}), i\in I$.
\end{lemma}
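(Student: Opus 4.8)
The plan is to reduce the statement to the purely even (i.e.\ $\q_{\bar 0} = \gl(n)$) situation, essentially replaying the argument of Proposition~\ref{prop4a} but now one map-weight coordinate at a time. First I would fix a highest map-weight vector $w_{\psi}\in H(\psi)$ and a simple even root $\alpha_i$ with its $\fsl(2)$-triple $\{e_i,f_i,h_i\}$. Since $(\n^+\otimes A)w_{\psi}=0$ and in particular $e_i w_{\psi}=0$, I would use the standard $\fsl(2)$ commutator identity
\[
e_i f_i^{k} = f_i^{k} e_i + k\, f_i^{k-1}\bigl(h_i - (k-1)\bigr),
\]
valid in $\mathbf{U}(\q\otimes A)$ because $e_i\otimes 1, f_i\otimes 1, h_i\otimes 1$ span an $\fsl(2)$ inside $\q\otimes\C$. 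Applying this to $w_{\psi}$ and using $h_i w_{\psi}=\lambda(h_i)w_{\psi}$ gives $e_i f_i^{k}w_{\psi} = k\bigl(\lambda(h_i)-(k-1)\bigr) f_i^{k-1}w_{\psi}$, so with $k=\lambda(h_i)+1$ we get $e_i\bigl(f_i^{\lambda(h_i)+1}w_{\psi}\bigr)=0$.

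Next I would show $f_i^{\lambda(h_i)+1}w_{\psi}$ is in fact annihilated by all of $\n^+\otimes A$ and (after checking weights) would therefore have to vanish by irreducibility of $V(\psi)$, or—more carefully, since we are working in the module generated by $H(\psi)$ rather than in the irreducible quotient—generate a submodule whose intersection with $H(\psi)$ is zero, hence is proper; but as in Proposition~\ref{prop4a} the point is to rule this out weight-theoretically. Concretely: the vector $u := f_i^{\lambda(h_i)+1}w_{\psi}$ has weight $\lambda - (\lambda(h_i)+1)\alpha_i$. For $\alpha_j\otimes a$ with $j\neq i$ and $|i-j|\neq 1$ the relevant root bracket $[e_j, f_i]=0=[e_j',f_i]$ kills it immediately; for $j=i$ we have just shown $e_i u=0$, and one needs the analogue for $e_i'\otimes a$. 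This is exactly the subtle case handled in Proposition~\ref{prop4a}: when $\lambda(h_i)\geq 1$ the vector $e_i' u$ (if nonzero) has weight strictly below $\lambda$ and is again a highest-weight-type vector, so it generates a submodule meeting $H(\psi)$ trivially, contradicting that $V(\psi)$ is the image and that the highest weight space is $H(\psi)$; when $\lambda(h_i)=0$ one has $\lambda_i=\lambda_{i+1}=0$, so by Lemma~\ref{clifflem1} applied to $H(\psi)$ (viewed through the Clifford algebra structure) $k_i' w_{\psi}=k_{i+1}' w_{\psi}=0$, and then $e_i' f_i w_{\psi} = f_i e_i' w_{\psi} + (k_i'-k_{i+1}')w_{\psi} = 0$ directly. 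Repeating for the $|i-j|=1$ neighbours using the relations $[e_i,e_{i+1}']=[e_i',e_{i+1}]$ etc., closes the argument.

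The main obstacle I anticipate is bookkeeping the $A$-thickening: one must verify that $(\n^+\otimes A)\bigl(f_i^{\lambda(h_i)+1}w_{\psi}\bigr)=0$ for \emph{all} of $A$, not just for $1\in A$. The key observation making this routine is that $f_i\otimes 1$ commutes with $\n^+\otimes A$ exactly as in the coefficient case—$[\q_{-\alpha_i}, \n^+]$ lands in $\n^+\oplus\h$—so one can push $e_\beta\otimes a$ past $f_i^{\lambda(h_i)+1}$ picking up terms that are either already zero on $w_\psi$ (the $\n^+\otimes A$ part, and the $\h_{\bar 0}\otimes A$ part acting as scalars $\psi$) or reduce to the $\h_{\bar 1}\otimes A$ terms $k_i'\otimes a$ which are controlled, when $\lambda(h_i)=0$, by the Clifford-module vanishing. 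Thus the $A$-coefficients never genuinely interfere; the proof is the Proposition~\ref{prop4a} argument carried out with $w_{\psi}$ in place of $v$ and with $V(\psi)$ (resp.\ the module generated by $H(\psi)$) in place of $V(\lambda)$. I would present it in that parallel form, flagging only the two places where the map-weight/Clifford structure of $H(\psi)$ is invoked.
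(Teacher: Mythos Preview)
Your approach has a genuine gap. You propose to replay Proposition~\ref{prop4a}, but that proof relies essentially on the \emph{irreducibility} of $V(\lambda)$: once one shows that $f_i^{\lambda(h_i)+1}v$ (or $e_i'f_i^{\lambda(h_i)+1}v$) is a highest-weight vector of weight $<\lambda$, the conclusion that it vanishes comes only from the fact that a nontrivial proper submodule cannot exist in a simple module. You notice the problem---``we are working in the module generated by $H(\psi)$ rather than in the irreducible quotient''---but your proposed fix, to ``rule this out weight-theoretically,'' is not a fix: having weight strictly below $\lambda$ shows only that the submodule generated by $u$ is proper, and in a non-simple module that is no contradiction whatsoever. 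Nothing in your outline supplies the missing ingredient, and the elaborate program of verifying $(\n^+\otimes A)u=0$ for all of $A$ is beside the point---even if carried out, it would still leave you with a nonzero highest-weight vector in a possibly reducible module.

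The paper's argument is different and sidesteps irreducibility entirely. By Theorem~\ref{thm3.6} the module in question is a quotient of the global Weyl module $W_A(\lambda)$, hence lies in $\mathcal{I}_{\q\otimes A,\q_{\bar 0}}$: as a $\q_{\bar 0}$-module it is a direct sum of finite-dimensional irreducibles, so its set of $\h_{\bar 0}$-weights is invariant under the Weyl group of $\q_{\bar 0}$. Now $f_i^{\lambda(h_i)+1}w_\psi$ has weight $\mu = \lambda - (\lambda(h_i)+1)\alpha_i$, and $s_{\alpha_i}(\mu) = \lambda + \alpha_i$. Since all weights lie in $\lambda - Q^+$, the weight $\lambda+\alpha_i$ does not occur; by $W$-invariance neither does $\mu$, forcing $f_i^{\lambda(h_i)+1}w_\psi = 0$. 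This is the classical integrability argument: it uses only Weyl-invariance of the weight set (equivalently, local finiteness over $\q_{\bar 0}$), not simplicity of the module. Your $\fsl(2)$ commutator computation is correct as far as it goes, but the paper's route is both shorter and actually reaches the conclusion.
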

\begin{proof}
Let $\h= \h_{\bar{0}} \oplus \h_{\bar{1}}$ be the Cartan subalgebra of $\q$ and $\h \otimes A$ be the Cartan subalgebra of $\q \otimes A$. Since $\h \subset \h \otimes A$ is a subalgebra, $h w_{\psi}=\lambda(h)w_{\psi}$ for all $h \in \h_{\bar{0}}, w_{\psi} \in H({\psi})$ and $\h_{\bar{1}}w_{\psi}=0$. So $\lambda$ is the highest weight with highest weight vector $w_{\psi}$. The vector $f_{i}^{\lambda(h_{i})+1}w_{\psi}$ has weight $\lambda-(\lambda(h_{i})+1)\alpha_i$. On the other hand, by Theorem  \ref{thm3.6}, $W_A^{\mbox{loc}}(\psi)$ is a quotient of the global Weyl module $W_A(\lambda)$ up to $\Pi$, hence is direct sum of finite-dimensional irreducible $\q_{\bar{0}}$-modules. This implies the weights of $W_A^{\mbox{loc}}(\psi)$ are invariant under the action of Weyl group of $\q_{\bar{0}}$.  Let $s_{\alpha_i}$ denotes the reflection associated to the root $\alpha_i$. Then $s_{\alpha_i}(\lambda-(\lambda(h_{i})+1)=\lambda+\alpha_i$, and this implies $f_{i}^{\lambda(h_{i})+1}w_{\psi}=0$.
\end{proof}

Let $u$ be an indeterminate and for $a\in A, \alpha\in  \Phi_{\bar{0}}^{+}$, define a power series with coefficients in $\mathbf{U}(\h\otimes A)$ by 
\[\mathbf{p}_{a, \alpha}(u) = \exp \left(-\sum_{r=1}^{\infty} \frac{h_{\alpha}\otimes a^r}{r}u^r\right).\]
For $i\in \N$, let $p_{a, \alpha}^i$ be the coefficient of $u^i$ in $\mathbf{p}_{a, \alpha}(u)$.
\begin{lemma}\label{lem4.4}
Suppose $r \in \mathbb{N}$, $a \in A$ and $\alpha \in \Phi_{\bar{0}}^{+}$ then 
\begin{equation}\label{eq4.1}
(x_{\alpha} \otimes a)^{r}(y_{\alpha}\otimes 1)^{r+1}-(-1)^{r} \sum_{i=0}^{r} (y_{\alpha}\otimes a^{r-i})p_{a, \alpha}^{i} \in \mathbf{U}(\g \otimes A)(\n^{+}\otimes A).
\end{equation}
\end{lemma}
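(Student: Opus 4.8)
The plan is to reduce this identity to a purely $\mathfrak{sl}(2)$-type computation inside $\mathbf{U}(\mathfrak{sl}(2)\otimes A)$, since the root vectors $x_\alpha, y_\alpha, h_\alpha$ generate a copy of $\mathfrak{sl}(2)\otimes A$ and the statement \eqref{eq4.1} only involves these. First I would recall the standard commutation formula in $\mathbf{U}(\mathfrak{sl}(2))$: for the triple $(x,y,h)$ one has
\[
x^r y^{r+1} = \sum_{j} (\text{poly in } h)\, y^{r+1-j} x^{r-j},
\]
and more usefully the identity due to Garland (used in the original Weyl module papers of Chari--Pressley, Chari--Fourier--Khandai) expressing $(x\otimes a)^r (y\otimes 1)^{r+1}$ modulo $\mathbf{U}(\mathfrak{g}\otimes A)(\mathfrak{n}^+\otimes A)$ as $(-1)^r\sum_{i=0}^r (y\otimes a^{r-i})\, p^i_{a,\alpha}$. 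So the cleanest route is: (i) observe that all four relevant elements lie in the subalgebra $\mathfrak{s}_\alpha\otimes A$ with $\mathfrak{s}_\alpha\cong\mathfrak{sl}(2)$; (ii) inside $\mathbf{U}(\mathfrak{s}_\alpha\otimes A)$ prove the identity by induction on $r$; (iii) note that $\mathbf{U}(\mathfrak{s}_\alpha\otimes A)(\mathfrak{n}^+\otimes A)\cap(\mathfrak{s}_\alpha\otimes A)$-part embeds into $\mathbf{U}(\mathfrak{q}\otimes A)(\mathfrak{n}^+\otimes A)$, so the congruence transfers.

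The induction on $r$ goes as follows. For $r=0$ the statement reads $(y_\alpha\otimes 1) - (y_\alpha\otimes 1)p^0_{a,\alpha}\in \cdots$, and since $p^0_{a,\alpha}=1$ this is trivially $0$. For the inductive step, I would write $(x_\alpha\otimes a)^{r}(y_\alpha\otimes 1)^{r+1} = (x_\alpha\otimes a)\cdot (x_\alpha\otimes a)^{r-1}(y_\alpha\otimes 1)^{r}\cdot (y_\alpha\otimes 1)$, apply the inductive hypothesis to the middle factor (modulo $\mathbf{U}(\mathfrak q\otimes A)(\mathfrak n^+\otimes A)$, being careful that multiplying on the \emph{left} by $x_\alpha\otimes a$ does not preserve that right ideal, so one must instead commute $x_\alpha\otimes a$ past the congruence), and then use the commutation relations $[x_\alpha\otimes a, y_\alpha\otimes a^{r-i}] = h_\alpha\otimes a^{r-i+1}$ together with the defining recursion for the coefficients $p^i_{a,\alpha}$ coming from $\mathbf{p}_{a,\alpha}(u) = \exp(-\sum_r \frac{h_\alpha\otimes a^r}{r}u^r)$, namely $i\,p^i_{a,\alpha} = -\sum_{k=1}^{i}(h_\alpha\otimes a^k)p^{i-k}_{a,\alpha}$. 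The generating-function bookkeeping is exactly as in Garland's lemma; one organizes the computation by multiplying the relevant power-series identity
\[
\Bigl(\sum_{i\ge 0}(y_\alpha\otimes a^{-i}u^i)\Bigr)\,\mathbf{p}_{a,\alpha}(u) \;=\; \text{(target)}
\]
and extracting coefficients.

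The main obstacle I anticipate is the left-ideal issue: the target congruence is modulo the \emph{left} ideal $\mathbf{U}(\mathfrak q\otimes A)(\mathfrak n^+\otimes A)$ generated on the right by $\mathfrak n^+\otimes A$, but the natural induction multiplies on the left by $x_\alpha\otimes a\in\mathfrak n^+\otimes A$, which lands \emph{in} the ideal rather than preserving it under right-multiplication. The resolution, and the delicate bookkeeping step, is to instead peel off $y_\alpha\otimes 1$ from the right and repeatedly move $x_\alpha\otimes a$ to the right past powers of $y_\alpha\otimes 1$: each commutator $[x_\alpha\otimes a,(y_\alpha\otimes 1)^k]$ produces terms with an $h_\alpha\otimes a$ factor (reducing the power of $x$), and the $x$ that eventually reaches the far right acts on the highest-weight vector trivially — equivalently it is absorbed into $\mathbf{U}(\mathfrak q\otimes A)(\mathfrak n^+\otimes A)$. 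Tracking the $h_\alpha\otimes a^j$ factors that accumulate and matching them against the exponential generating series for $p^i_{a,\alpha}$ is the one genuinely computational part; everything else is formal. A secondary (minor) point to check is that the $\mathfrak{sl}(2)$-triple relations hold exactly in $\mathfrak q$ for even roots $\alpha_i$, which is guaranteed by the discussion preceding Proposition~\ref{prop4a}, so no super-signs intrude in this sub-computation.
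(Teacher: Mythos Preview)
Your approach is correct but takes a different, more laborious route than the paper. The paper does not redo Garland's induction at all: it simply cites the known case $A=\C[t^{\pm 1}]$ from \cite{CP01}, observes that the proof there never uses invertibility of $t$ so the identity holds already in $\mathbf{U}(\mathfrak{sl}(2)\otimes\C[t])$, and then pushes it forward along the Lie algebra homomorphism
\[
\mathfrak{sl}(2)\otimes\C[t]\longrightarrow \mathfrak{sl}(2)\otimes A,\qquad x\otimes t^r\mapsto x\otimes a^r,
\]
which sends $x_\alpha\otimes t$, $y_\alpha\otimes 1$, $h_\alpha\otimes t^k$ to $x_\alpha\otimes a$, $y_\alpha\otimes 1$, $h_\alpha\otimes a^k$ and hence carries each $p^i_{t,\alpha}$ to $p^i_{a,\alpha}$ and the left ideal $\mathbf{U}(\mathfrak{sl}(2)\otimes\C[t])(x_\alpha\otimes\C[t])$ into $\mathbf{U}(\q\otimes A)(\n^+\otimes A)$. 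This functorial transfer is a one-line argument once the polynomial case is granted. Your direct induction buys self-containment (you do not need to invoke \cite{CP01}) and makes the mechanism visible, at the price of reproducing the bookkeeping with the exponential generating series and the left-ideal commutation that the paper simply outsources. Your worry about super-signs is correctly dispatched: since $\alpha\in\Phi_{\bar 0}^+$ the triple $x_\alpha,y_\alpha,h_\alpha$ is purely even, so the computation lives entirely in an ordinary $\mathfrak{sl}(2)\otimes A$ and no signs appear.
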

\begin{proof}
When $A = \C[t^{\pm 1}]$, the formula in \eqref{eq4.1} is proved in \cite{CP01}. Further since the fact that $t$ is an invertible element in $\C[t^{\pm 1}]$ is not used in that proof, the result is still true when $A = \C[t]$. Applying, the Lie algebra homomorphism,
\[\mathfrak{sl}(2)\otimes \C[t]\to \mathfrak{sl}(2)\otimes A, \quad x\otimes t^r \mapsto x\otimes a^r, \quad r\in \N, \;\;x\in \mathfrak{sl}(2)\] gives the result.
\end{proof}
From now on we assume that $A$ is finitely generated (say $a_1, \ldots, a_m$ be a set of generators of $A$). Using the first and third relation of the Definition \ref{def4.1} and Lemma \ref{lem4.4}, and then applying induction, one can prove the following. 
\begin{lemma}\label{lem4.5}
Suppose $\psi \in \mathcal{L}(\h\otimes A)$ such that $\lambda= \psi \mid_{\h_{\bar{0}}} \in \Lambda^{+}$. If $\alpha \in \Phi_{\bar{0}}^+$, $a_{1}, a_{2}, \dots, a_{m} \in A$, and  $s_{1}, s_{2}, \cdots, s_{m} \in \mathbb{N}, w_{\psi}\in H(\psi)$, then
\begin{equation}
(y_{\alpha}\otimes a_{1}^{s_{1}} \cdots a_{t}^{s_{m}})w_{\psi} \in \mathrm{span}_{\mathbb{C}}\{(y_{\alpha}\otimes a_{1}^{\ell_{1}} \cdots a_{m}^{\ell_{m}})w_{\psi} \mid 0 \leq \ell_{i}<\lambda({h_{\alpha}}), i=1, \cdots, m\}. 
\end{equation}
In particular, $(y_{\alpha}\otimes A)H\psi)$ is finite dimensional.
\end{lemma}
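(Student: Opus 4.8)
The plan is to distill a one--variable recursion from Lemma \ref{lem4.4} and then linearise in the generators $a_1,\dots,a_m$ so as to push every exponent below $\lambda(h_\alpha)$. First I would record the two facts needed about $w_\psi$. As in the proof of the preceding lemma, $W_A^{\mathrm{loc}}(\psi)$ is, up to $\Pi$, a quotient of the global Weyl module $W_A(\lambda)$ and hence a direct sum of finite dimensional irreducible $\q_{\bar 0}$--modules; in particular $\mathbf{U}(\q_{\bar 0})w_\psi$ is finite dimensional, and since $w_\psi$ is a $\q_{\bar 0}$--highest weight vector of weight $\lambda\in\Lambda_{\bar 0}^+$ it generates a finite dimensional $\fsl(2)$--module (for the triple $(x_\alpha,y_\alpha,h_\alpha)$, $\alpha\in\Phi_{\bar 0}^+$) on which it is killed by $x_\alpha$ and has $h_\alpha$--eigenvalue $\lambda(h_\alpha)\geq 0$, forcing $y_\alpha^{\lambda(h_\alpha)+1}w_\psi=0$. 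Write $N:=\lambda(h_\alpha)\in\Z_{\geq 0}$.

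Next I would apply Lemma \ref{lem4.4} to $w_\psi$ for an arbitrary $a\in A$ and any $r\geq N$. The first relation of Definition \ref{def4.1} annihilates $\mathbf{U}(\q\otimes A)(\n^+\otimes A)w_\psi$, the bound just established gives $y_\alpha^{r+1}w_\psi=0$ because $r+1>N$, and each $p^i_{a,\alpha}\in\mathbf{U}(\h_{\bar 0}\otimes A)$ acts on $H(\psi)$ by the scalar $c_i(a)$ prescribed by $\psi$ (independent of the chosen $w_\psi$), with $c_0(a)=1$. What remains is
\[
(y_\alpha\otimes a^r)w_\psi=-\sum_{i=1}^{r}c_i(a)\,(y_\alpha\otimes a^{r-i})w_\psi\qquad(r\geq N),
\]
so $(y_\alpha\otimes a^r)w_\psi\in\vspan_{\C}\{(y_\alpha\otimes a^j)w_\psi:0\leq j\leq r-1\}$ for all $r\geq N$ and all $a\in A$.

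Now I would substitute $a=\xi_1a_1+\cdots+\xi_m a_m$ for indeterminates $\xi_1,\dots,\xi_m$ and set $\mathbf{a}^{\mathbf{t}}:=a_1^{t_1}\cdots a_m^{t_m}$, $|\mathbf{t}|:=t_1+\cdots+t_m$. Both sides of the displayed recursion become polynomials in the $\xi_k$ with values in $\vspan_{\C}\{(y_\alpha\otimes\mathbf{a}^{\mathbf{t}})w_\psi:\mathbf{t}\in\N^m\}$, so, $\C$ being infinite, they agree coefficient by coefficient. Comparing the coefficient of $\xi_1^{t_1}\cdots\xi_m^{t_m}$ with $|\mathbf{t}|=r\geq N$: on the left it is $\binom{r}{\mathbf{t}}(y_\alpha\otimes\mathbf{a}^{\mathbf{t}})w_\psi$, a nonzero multiple, while on the right every term carries a factor $(y_\alpha\otimes a^{r-i})w_\psi$ with $i\geq1$ and hence produces only vectors $(y_\alpha\otimes\mathbf{a}^{\mathbf{t}'})w_\psi$ with $|\mathbf{t}'|=r-i<r$. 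Thus $(y_\alpha\otimes\mathbf{a}^{\mathbf{t}})w_\psi\in\vspan_{\C}\{(y_\alpha\otimes\mathbf{a}^{\mathbf{t}'})w_\psi:|\mathbf{t}'|<|\mathbf{t}|\}$ whenever $|\mathbf{t}|\geq N$, and a straightforward induction on $|\mathbf{s}|$ then rewrites each $(y_\alpha\otimes\mathbf{a}^{\mathbf{s}})w_\psi$ as a $\C$--combination of vectors $(y_\alpha\otimes\mathbf{a}^{\mathbf{t}})w_\psi$ with $|\mathbf{t}|<N$, hence with every component $t_k<N=\lambda(h_\alpha)$; this is exactly the asserted inclusion. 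Since $a_1,\dots,a_m$ generate $A$, its monomials span $A$, so $(y_\alpha\otimes A)H(\psi)$ is spanned by the finitely many $(y_\alpha\otimes\mathbf{a}^{\mathbf{t}})w$ with $0\leq t_k<\lambda(h_\alpha)$ and $w$ running over a basis of $H(\psi)$, and is finite dimensional.

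The main obstacle is exactly the passage from a single element of $A$ to products of generators: Lemma \ref{lem4.4} only controls powers of one element, and bridging this requires the linearisation together with the small but essential observation that the scalars $c_i(a)$ can never raise the total degree in $a_1,\dots,a_m$ of the monomials that occur. Getting the bookkeeping around the $p^i_{a,\alpha}$ right — that they act by scalars on $H(\psi)$ and, after the substitution, feed back only strictly lower degree monomials — is the one delicate point; everything else is a routine induction.
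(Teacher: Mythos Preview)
Your proof is correct and follows exactly the approach the paper sketches: you use the first and third relations of Definition~\ref{def4.1} together with Lemma~\ref{lem4.4} (supplementing with the $\fsl(2)$ argument to get $y_\alpha^{\lambda(h_\alpha)+1}w_\psi=0$ for non-simple $\alpha$), and then induct on the total degree. The linearisation $a=\sum_k\xi_k a_k$ and comparison of $\xi$-coefficients is a clean and standard way to carry out the passage from one variable to several that the paper leaves implicit.
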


\begin{lemma}\label{prop4.6}
If $\psi \not \in \mathcal{L}(\h\otimes A)$, then $W_A^{\mbox{loc}}(\psi) = 0$.
\end{lemma}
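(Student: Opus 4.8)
The plan is to prove the contrapositive: assuming $W:=W_A^{\mathrm{loc}}(\psi)\neq 0$, I will produce a finite–codimensional ideal $I\subseteq A$ with $\psi(\h_{\bar 0}\otimes I)=0$, i.e.\ show $\psi\in\mathcal{L}(\h\otimes A)$. Fix a nonzero highest map-weight vector $w_\psi$ and set $\lambda:=\psi\mid_{\h_{\bar 0}}$; there is nothing to do unless $\lambda\in\Lambda^{+}$, so I assume this.

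\textbf{Squeezing the even Cartan.} Fix $i\in I$ and $a\in A$. Applying Lemma~\ref{lem4.4} with $\alpha=\alpha_i$ and any $r\geq\lambda(h_i)$ to $w_\psi$, and using $(\n^{+}\otimes A)w_\psi=0$ and $f_i^{\lambda(h_i)+1}w_\psi=0$, both the term $(x_{\alpha_i}\otimes a)^{r}f_i^{r+1}w_\psi$ and the $\mathbf{U}(\q\otimes A)(\n^{+}\otimes A)$-term vanish, leaving $\sum_{j=0}^{r}(y_{\alpha_i}\otimes a^{r-j})\,p_{a,\alpha_i}^{j}\,w_\psi=0$. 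Since $p_{a,\alpha_i}^{j}\in\mathbf{U}(\h_{\bar 0}\otimes A)$ acts on $w_\psi$ by the scalar $\pi_{a,i}^{j}:=[u^{j}]\,\Omega_{a,i}(u)$, where $\Omega_{a,i}(u):=\exp\bigl(-\sum_{r\geq 1}\tfrac{\psi(h_i\otimes a^{r})}{r}u^{r}\bigr)$, this becomes $\sum_{j=0}^{r}\pi_{a,i}^{j}(y_{\alpha_i}\otimes a^{r-j})w_\psi=0$ for all $r\geq\lambda(h_i)$. Hitting it with $x_{\alpha_i}\otimes a^{s}$ ($s\geq 0$), and using $[x_{\alpha_i}\otimes a^{s},y_{\alpha_i}\otimes a^{r-j}]=h_i\otimes a^{s+r-j}$ with $(x_{\alpha_i}\otimes a^{s})w_\psi=0$, gives $\sum_{j=0}^{r}\pi_{a,i}^{j}\,\psi(h_i\otimes a^{s+r-j})w_\psi=0$, hence $\sum_{j=0}^{r}\pi_{a,i}^{j}\,\psi(h_i\otimes a^{s+r-j})=0$ for all $s\geq 0$, $r\geq\lambda(h_i)$, because $w_\psi\neq 0$. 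Taking $s=0$ says $\Omega_{a,i}(u)\sum_{r\geq 0}\psi(h_i\otimes a^{r})u^{r}$ is a polynomial of degree $<\lambda(h_i)$; combined with $\sum_{r\geq 0}\psi(h_i\otimes a^{r})u^{r}=\lambda(h_i)-u\,\Omega_{a,i}'(u)/\Omega_{a,i}(u)$ this is the linear ODE $u\,\Omega_{a,i}'=\lambda(h_i)\,\Omega_{a,i}-Q_{a,i}$ with $\deg Q_{a,i}<\lambda(h_i)$, whose only formal power series solution with constant term $1$ is a polynomial of degree $\leq\lambda(h_i)$. Thus $r\mapsto\psi(h_i\otimes a^{r})$ is a finite $\Z_{\geq 0}$-combination of geometric sequences.

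\textbf{Multivariable refinement and the central direction.} Writing $a_1,\dots,a_m$ for a generating set of $A$ and replacing single powers by monomials $a_1^{\bullet}\cdots a_m^{\bullet}$, the finiteness in Lemma~\ref{lem4.5} lets one repeat the computation to obtain a linear recursion, in each variable separately, for $(\ell_1,\dots,\ell_m)\mapsto\psi(h_i\otimes a_1^{\ell_1}\cdots a_m^{\ell_m})$; this function is therefore polynomial–exponential, so $p\mapsto\psi(h_i\otimes p)$ kills a finite–codimensional ideal of $A$. This controls $\psi$ on $\vspan_{\C}\{h_i\mid i\in I\}$, which is only a hyperplane in $\h_{\bar 0}$ because $\q_{\bar 0}=\gl(n)$ has a one–dimensional centre; to control the remaining direction $\zeta:=k_1+\cdots+k_n$ I use the odd negative root vectors. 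From the defining relation $[e_i'\otimes a,\ f_i'\otimes b]=(k_i+k_{i+1})\otimes ab$ and $(e_i'\otimes a)w_\psi=0$ one gets $(e_i'\otimes a^{s})(f_i'\otimes a^{t})w_\psi=\psi\bigl((k_i+k_{i+1})\otimes a^{s+t}\bigr)w_\psi$; since $f_i'=-[k_i',f_i]$, the span of $(f_i'\otimes A)w_\psi$ is finite–dimensional (an odd analogue of Lemma~\ref{lem4.5}, reducing to the finiteness of $(f_i\otimes A)w_\psi$ and of $(f_i\otimes A)\bigl((k_i'\otimes 1)w_\psi\bigr)$), so the same squeezing argument yields linear recursions, in each generator, for $\psi\bigl((k_i+k_{i+1})\otimes -\bigr)$, and a multivariable version as above. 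Combining the $h_i=k_i-k_{i+1}$ and $k_i+k_{i+1}$ data, $p\mapsto\psi(k_j\otimes p)$ kills a finite–codimensional ideal $I_j\subseteq A$ for each $j$; then $I:=\bigcap_{j}I_j$ is finite–codimensional with $\psi(\h_{\bar 0}\otimes I)=0$, i.e.\ $\psi\in\mathcal{L}(\h\otimes A)$ — a contradiction. Hence $W_A^{\mathrm{loc}}(\psi)=0$.

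I expect the main obstacle to be the central–direction part of the second step, which has no counterpart in the classical setting of \cite{CLS19}: one needs the odd analogue of Lemma~\ref{lem4.5} stating that $(f_i'\otimes A)w_\psi$ is finite–dimensional. This should follow by combining Lemma~\ref{lem4.5} for the even root vector $f_i$ with the relations $f_i'=-[k_i',f_i]$, $[e_i',f_i']=k_i+k_{i+1}$ and the structural property~\eqref{eqsystem30} of the simple roots of $\q$, but it requires care because $(k_i'\otimes 1)w_\psi$ only satisfies $f_i^{\lambda(h_i)+1}(k_i'\otimes 1)w_\psi=0$ modulo the finite–dimensional space $(f_i'\otimes 1)\,\mathbf{U}(\q_{\bar 0})w_\psi$, so a relative form of the recursion coming from Lemma~\ref{lem4.4} must be used. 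The remaining work — intersecting finitely many finite–codimensional ideals, and passing from ``recursive along each generator of $A$'' to ``vanishing on a genuine finite–codimensional ideal'' (where finite generation of $A$ is used) — is routine commutative algebra, and the overall architecture parallels the corresponding argument of \cite{CLS19}.
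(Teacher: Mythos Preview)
Your contrapositive strategy is sound, but the paper takes a much more direct route that avoids the explicit recursion and generating-function computations. Rather than extracting scalar recursions for $\psi(h_i\otimes a^r)$ from Lemma~\ref{lem4.4}, solving an ODE for the generating series $\Omega_{a,i}$, and then arguing that polynomial--exponential functionals kill a finite-codimensional ideal, the paper works with annihilators of module elements: for each $\alpha\in\Phi^+$ it sets $I_\alpha=\{a\in A\mid (\q_{-\alpha}\otimes a)\,W^{\mathrm{loc}}_A(\psi)_\lambda=0\}$, observes this linear subspace has finite codimension because $(\q_{-\alpha}\otimes A)H(\psi)$ is finite-dimensional (Lemma~\ref{lem4.5}), and proves $I_\alpha$ is an ideal by the one-line computation $0=(h\otimes b)(u\otimes a)v=-\alpha(h)(u\otimes ba)v+(u\otimes a)(h\otimes b)v$. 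The finite intersection $I=\bigcap_\alpha I_\alpha$ then satisfies $(\n^-\otimes I)H(\psi)=0$, and the inclusion $\h_{\bar 0}\otimes I\subseteq[\n^+\otimes A,\n^-\otimes I]$ yields $\psi(\h_{\bar 0}\otimes I)=0$ at once. This packaging makes the passage from ``finite-dimensional orbit'' to ``ideal of finite codimension'' automatic and removes your multivariable recursion-to-ideal step entirely.

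Your identification of the central direction $\zeta=k_1+\cdots+k_n$ as the delicate point is exactly right, and it is somewhat elided in the paper: the even commutators $[e_i,f_i]=h_i$ span only the semisimple part of $\h_{\bar 0}$, so one genuinely needs the odd root vectors and $[e_i',f_i']=k_i+k_{i+1}$ to reach $\zeta$. The paper absorbs this by letting $I_\alpha$ annihilate all of $\q_{-\alpha}$ (both parities), which in turn requires the odd analogue of Lemma~\ref{lem4.5} that you flag; so on this point both arguments rest on the same sublemma. The net comparison: your approach is computationally heavier but more explicit about where each piece of $\h_{\bar 0}$ comes from; the paper's annihilator-ideal argument is shorter and conceptually cleaner, and is the standard trick (inherited from \cite{CLS19}) for this kind of statement.
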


\begin{proof}
Let  $\lambda= \psi \mid_{\h_{\bar{0}}} \in \Lambda^{+}$, let $\alpha$ be a positive root of $\q$ and let $I_{\alpha}$ be the kernel of the linear map 
\begin{align*}
A&\to \Hom_{\C}(W_A^{\mbox{loc}}(\psi)_{\lambda}\otimes \q_{-\alpha}, (\q_{-\alpha}\otimes A)H(\psi),\\
a&\to (v\otimes u \mapsto (u\otimes a)v), \quad a\in A, v\in W_A^{\mbox{loc}}(\psi)_{\lambda}, \;\; u\in \q_{-\alpha}.
\end{align*}
Notice that $I_{\alpha} = \{a\in A\mid (u\otimes a)v =0, v\in W_A^{\mbox{loc}}(\psi)_{\lambda}, \;\; u\in \q_{-\alpha}\}$. Since $\q_{-\alpha} = \C y_{\alpha}$, Lemma \ref{lem4.5} gives that $(\q_{-\alpha}\otimes A)w_{\psi})$ is finite dimensional. Thus, $I_{\alpha}$ is a linear subspace of $A$ of finite co-dimension. We claim that $I_{\alpha}$ is an ideal of $A$. Since $\alpha\neq 0$, we can choose $h \in \h_{\bar{0}}$ such that $\alpha(h)\neq 0$. Then, for all $b\in A, a\in I_{\alpha}, v\in W_A^{\mbox{loc}}(\psi)_{\lambda}, \;\; u\in \q_{-\alpha}$, we have 
\begin{align*}
0 &= (h\otimes b)(u\otimes a)v\\
&= [h\otimes b, u\otimes a]v+ (u \otimes a)(h\otimes b)v\\
&= -\alpha(h)(u\otimes ba)v+ (u\otimes a)(h\otimes b)v.
\end{align*} 
Since $(h\otimes b)v\in W_A^{\mbox{loc}}(\psi)_{\lambda}$ and $a\in I_{\alpha}$, the last term above is zero. Since we have assumed that $\alpha(h)\neq 0$, this implies that $(u\otimes ba)v$. As this holds for all $v\in W_A^{\mbox{loc}}(\psi)_{\lambda}$ and $u\in \q_{-\alpha}$, we have $ba \in I_{\alpha}$. Hence $I_{\alpha}$ is an ideal of $A$.

Let $I = \bigcap_{\alpha \in  \Phi_{\bar{0}}^{+}}I_{\alpha}$. Since $\q$ is finite dimensional and hence a finite number of positive roots, this intersection is finite and thus $I$ is an ideal of $A$ of finite co-dimension. Then we have 
\[(\n_{\bar{0}}^{-} \otimes I)W_A^{\mbox{loc}}(\psi)_{\lambda}=0. \] Since $\lambda$ is the highest weight of $W_A^{\mbox{loc}}(\psi)_{\lambda}$, we also have $(\n^{+} \otimes A)W_A^{\mbox{loc}}(\psi)_{\lambda}=0$. Further, since $\h_{\bar{0}}\otimes I\subseteq [\n^{+}\otimes A, \n_{\bar{0}}^{-}\otimes I]$, we have $(\h_{\bar{0}} \otimes I)W_A^{\mbox{loc}}(\psi)_{\lambda}=0$. In particular, $(\h_{\bar{0}} \otimes a)w_{\psi} =0$ for all $a\in I$. Since $\psi \not \in \mathcal{L}(\h\otimes A)$, then there exists $a\in I$ such that $\psi(h\otimes a) \neq 0$. So, we must have $w_{\psi} =0$ and hence \[W_A^{\mbox{loc}}(\psi)= \mathbf{U}(\n^{-}\otimes A)w_{\psi} =0.\]
\end{proof}

\begin{defn}
Suppose $\psi \in \mathcal{L}(\h\otimes A)$ such that $\lambda= \psi \mid_{\h_{\bar{0}}} \in \Lambda^{+}$. Let $I_{\psi}$ be the sum of all ideals $I\subseteq A$ such that $(\h_{\bar{0}} \otimes I)W_A^{\mbox{loc}}(\psi)_{\lambda}=0$.
\end{defn}
\begin{rmk}
It follows from the proof of Lemma \label{prop4.6} that, $I_{\psi}$ is a finite co-dimensional ideal in $A$ and that $(y_{\alpha}\otimes I_{\psi}) w_{\psi} = 0$ for all $\alpha \in \Phi_{\bar{0}}^{+}$. Furthermore, since $I_{\psi}$ has finite co-dimension and $A$ is assumed to be finitely generated, we have that $I_{\psi}^n$ has finite co-dimension, for all $n\in \N$ (see \cite[Lemma 2.1(a), (b)]{CLS19})
\end{rmk}

\begin{lemma}\label{lem4.9}
Suppose $\psi \in \mathcal{L}(\h\otimes A)$ such that $\lambda= \psi \mid_{\h_{\bar{0}}} \in \Lambda^{+}$. Then there exists $n_{\psi}\in \N$ such that 
\[(\n^{-}\otimes I_{\psi}^{n_{\psi}}) w_{\psi}= 0\quad \mbox{for all}\;\; w_{\psi}\in H(\psi).\]
\end{lemma}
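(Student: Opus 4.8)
The goal is to show that some power of the finite co-dimensional ideal $I_\psi$ annihilates $w_\psi$ when tensored with the \emph{full} $\n^-$ (including its odd part), not just $\n^-_{\bar 0}$. The strategy is to first handle the even part and then bootstrap to the odd part using the bracket relations that express odd root vectors through even ones together with the Cartan element $k'_i$. First I would recall from the remark following Definition of $I_\psi$ that $(y_\alpha \otimes I_\psi) w_\psi = 0$ for every $\alpha \in \Phi_{\bar 0}^+$; since $\n^-_{\bar 0} = \bigoplus_{\alpha \in \Phi^+_{\bar 0}} \C y_\alpha$ and the root spaces $\q_{-\alpha}$ are one-dimensional in the even part, this already gives $(\n^-_{\bar 0} \otimes I_\psi) w_\psi = 0$.

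For the odd part, the key point is that for each simple root $\alpha_i$ the odd negative root vector $f_i'$ can be produced from $f_i$ via the relation $[k_l', f_i] = -\alpha_i(k_l) f_i'$ for suitable $l$, and more generally every odd negative root vector arises by bracketing even negative root vectors with the odd Cartan elements $k_j'$ (using $[\q_{\bar 1}, \q_{\bar 0}] \subseteq \q_{\bar 1}$ and, in the map algebra, $[k_j' \otimes a, y_\alpha \otimes b] = \pm (\text{odd root vector}) \otimes ab$). So I would compute, for an odd negative root vector $y_\alpha'$ and $a \in I_\psi$, something like $(y_\alpha' \otimes a) w_\psi$ by writing $y_\alpha' \otimes a = c\,[k_j' \otimes 1, y_\alpha \otimes a]$ up to lower-order terms, and then expand the bracket acting on $w_\psi$: one term is $k_j'(y_\alpha \otimes a)w_\psi$, which vanishes because $(y_\alpha \otimes a) w_\psi$ has weight $\lambda - \alpha$ and we can track how $\h_{\bar 1}$ acts, and the other term is $(y_\alpha \otimes a)(k_j' w_\psi)$, where $k_j' w_\psi \in H(\psi)$ so it is again killed by $(\n^-_{\bar 0}\otimes I_\psi)$. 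This shows $(\n^-_{\bar 1} \otimes I_\psi) w_\psi = 0$ as well, hence $(\n^- \otimes I_\psi) w_\psi = 0$, and one may even take $n_\psi = 1$; but to be safe about the Cartan bookkeeping I would instead argue that $(\n^- \otimes I_\psi^2) w_\psi = 0$ using that $\h_{\bar 1} \otimes I_\psi \subseteq \h_{\bar 0} \otimes I_\psi$ after one more bracket, which plainly annihilates $w_\psi$ by definition of $I_\psi$ together with the relation $\h_{\bar 1} w_\psi = 0$.

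The main obstacle I anticipate is the nonabelian nature of $\h$: because $[\h_{\bar 1}, \h_{\bar 1}] = \h_{\bar 0}$, the action of $\h_{\bar 1} \otimes A$ on the highest weight space is \emph{not} simply given by a character, and one must be careful that $H(\psi)$ is a genuine $\mathrm{Cliff}$-type module on which $\h_{\bar 1}\otimes A$ acts by operators satisfying $(h'\otimes a)(h''\otimes b) + (h''\otimes b)(h'\otimes a) = \psi([h',h'']\otimes ab)$ (up to the Clifford relations via \eqref{cliffeq29}). Consequently, to move an odd Cartan factor past an odd negative root vector I need the supercommutation identities carefully; the cleanest route is to fix the generators $f_i, f_i'$ and $k_l'$ of Proposition \cite{GJKM10} and use induction on the height of the root, at each step reducing $(\n^- \otimes I_\psi^k) w_\psi$ to an expression in $(\n^-_{\bar 0} \otimes I_\psi) (\n^-_{\le} \otimes I_\psi^{k-1}) w_\psi$ plus terms already handled. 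The induction on root height, organized so that the base case is the simple roots and the inductive step uses $[f_i, f_j]$, $[f_i', f_j]$ relations, is the real content; everything else is bookkeeping with the super sign conventions.
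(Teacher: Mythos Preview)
Your direct approach in the second paragraph is correct and is in fact cleaner than the paper's argument. Once the remark after the definition of $I_\psi$ gives $(y_\alpha\otimes I_\psi)\,w_\psi=0$ for every $\alpha\in\Phi_{\bar 0}^+$ and every $w_\psi\in H(\psi)$, you can finish in one stroke: for any negative root $\epsilon_j-\epsilon_i$ with $i<j$ one has $e_{j,i}'=[k_j',e_{j,i}]$, hence for $a\in I_\psi$
\[
(e_{j,i}'\otimes a)\,w_\psi=(k_j'\otimes 1)(e_{j,i}\otimes a)\,w_\psi-(e_{j,i}\otimes a)(k_j'\otimes 1)\,w_\psi,
\]
and both terms vanish: the first because $(e_{j,i}\otimes a)\,w_\psi=0$ already (this is the correct reason; your weight remark is irrelevant here), the second because $k_j'\,w_\psi\in H(\psi)$ and the even case applies to that element. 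Thus $(\n^-\otimes I_\psi)H(\psi)=0$ and you may take $n_\psi=1$. No induction on height is needed, and the worry about the Clifford action of $\h_{\bar 1}\otimes A$ is moot, since you only use $k_j'\otimes 1$, which certainly preserves the $\h\otimes A$-module $H(\psi)$.

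The paper takes a different route. For the base case it does not use $[k_l',f_i]$; instead it invokes property~\eqref{eqsystem30} to write each odd simple negative root vector as $f_i=[e_j,f_k]$ with $e_j\in\n^+$ and $f_k$ an \emph{even} negative root vector for $\alpha_k=\alpha_i+\alpha_j\in\Phi_{\bar 0}^+$, so that $(f_i\otimes I_\psi)w_\psi\subseteq(e_j\otimes A)(f_k\otimes I_\psi)w_\psi+(f_k\otimes I_\psi)(e_j\otimes A)w_\psi=0$. It then runs an induction on $\het(\alpha)$ to obtain $(\q_{-\alpha}\otimes I_\psi^{\het(\alpha)})w_\psi=0$, giving $n_\psi=\max_{\alpha\in\Phi^+}\het(\alpha)$. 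Your argument exploits the special feature $\Phi_{\bar 0}=\Phi_{\bar 1}$ of $\q$ (each odd root vector is $[k_l',\text{even root vector of the same root}]$) to bypass the height induction and get a sharper constant; the paper's argument is the one that ports verbatim to the more general setting of \cite{CLS19}, where one only has the abstract hypothesis~\eqref{eqsystem30}. Your final paragraph conflates the two strategies; either one suffices on its own.
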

\begin{proof}
For $\alpha = \sum_{i =1}^n a_i\alpha_i$, with $a_i\in \N$ and where the $\alpha_i$ are the simple roots of $\q$, we define the height of $\alpha$ to be 
\[\mbox{ht}(\alpha) = \sum_{i=1}^n a_i.\] By induction on the height of $\alpha$, we will show that $(\q_{-\alpha}\otimes I_{\psi}^{\mbox{ht}(\alpha)})w_{\psi} = 0$ for all $\alpha\in \Phi^+, w_{\psi}\in H(\psi)$. Since $\q$ is finite dimensional, the heights of elements of $\Phi^+$ are bounded above, and thus the lemma will follow.

For the base case, first we will show that 
\begin{equation}\label{eq4.3}
(f_i\otimes I_{\psi})w_{\psi} = 0, \quad \mbox{for all}\;\; w_{\psi}\in H(\psi), \alpha_i\in \textstyle\sum,
\end{equation}
since the set $\{f_{i}\mid \alpha_i\in \textstyle\sum\}$ of generators of $\n^-$. By the above remark, it suffices to consider the case $\alpha_i\in \textstyle\sum_{\bar{1}}$. At first fix such an $\alpha_i$. By \eqref{eqsystem30}, there exists $\alpha_j\in \Phi_{\bar{1}}$ such that $\alpha_k: = \alpha_i+\alpha_j\in \Phi_{\bar{0}}^{+}$. Note that $\dim \q_{\alpha} = (1\mid 1)$ for any $\alpha\in \Phi$, that is, $\q_{\alpha}$ is generated by an even vector and an odd vector. Further, since $[h, [e_{j}, f_{k}] ] = \alpha_i(h)[e_{j}, f_{k}]$, we can write after rescaling, if necessary
\begin{equation}
[e_{j}, f_{k}] = f_{i}.
\end{equation}
Then 
\begin{align*}
(f_i\otimes I_{\psi})w_{\psi} &= [e_j \otimes A, f_k\otimes I_{\psi}]w_{\psi}\\
&\subseteq (e_j \otimes A)(f_k\otimes I_{\psi}) w_{\psi}+ (f_k\otimes I_{\psi})(e_j \otimes A) w_{\psi}.
\end{align*}
Since by Definition \ref{def4.1},  $(e_j \otimes A) w_{\psi}$ and by the above remark, $(f_k\otimes I_{\psi}) w_{\psi}=0$, then from above, \eqref{eq4.3} holds.

Now suppose that $\alpha_j\in \Phi^+$ with $\mbox{ht}(\alpha_j) >1$. Then there exists $\alpha_k, \alpha_{\ell}\in \Phi^+$ with $\mbox{ht}(\alpha_k), \mbox{ht}(\alpha_{\ell}) < \mbox{ht}(\alpha_i)$ such that $f_i\in \C[f_k, f_{\ell}]$. Then by induction hypothesis
\begin{align*}
(f_i\otimes I_{\psi}^{\mbox{ht}(\alpha)})w_{\psi} &= [f_k\otimes I_{\psi}^{\mbox{ht}(\alpha_k)}, f_{\ell}\otimes I_{\psi}^{\mbox{ht}(\alpha_{\ell})}]w_{\psi}\\
& = (f_k\otimes I_{\psi}^{\mbox{ht}(\alpha_k)})( f_{\ell}\otimes I_{\psi}^{\mbox{ht}(\alpha_{\ell})})(w_{\psi})- ( f_{\ell}\otimes I_{\psi}^{\mbox{ht}(\alpha_{\ell})})(f_k\otimes I_{\psi}^{\mbox{ht}(\alpha_k)})(w_{\psi})\\
&=0.
\end{align*}

\end{proof}

\begin{cor}\label{cor4.9}
Suppose $\psi \in \mathcal{L}(\h\otimes A)$ such that $\lambda= \psi \mid_{\h_{\bar{0}}} \in \Lambda^{+}$ and let $n\in \N$ as in Lemma \ref{lem4.9}. Then  
\[(\q\otimes I_{\psi}^{n_{\psi}}) H(\psi)= 0.\]
\end{cor}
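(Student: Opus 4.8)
The plan is to split $\q\otimes I_\psi^{n_\psi}$ along the decomposition $\q=\n^-\oplus\h_{\bar 0}\oplus\h_{\bar 1}\oplus\n^+$ and to show that each of the four resulting subspaces annihilates $H(\psi)$; if $H(\psi)=0$ there is nothing to prove, so I assume $H(\psi)\neq 0$. Three of the four pieces are essentially free. By Definition \ref{def4.1} we have $(\n^+\otimes A)H(\psi)=0$, hence $(\n^+\otimes I_\psi^{n_\psi})H(\psi)=0$. Since $n_\psi\geq 1$ (the heights occurring in Lemma \ref{lem4.9} are those of positive roots, the smallest being $1$) we have $I_\psi^{n_\psi}\subseteq I_\psi$, and $(\h_{\bar 0}\otimes I_\psi)H(\psi)=0$ is built into the definition of $I_\psi$; so $(\h_{\bar 0}\otimes I_\psi^{n_\psi})H(\psi)=0$. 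Finally $(\n^-\otimes I_\psi^{n_\psi})H(\psi)=0$ is exactly Lemma \ref{lem4.9}. Everything therefore reduces to the odd Cartan piece $(\h_{\bar 1}\otimes I_\psi^{n_\psi})H(\psi)$, and I expect this to be the only real difficulty.

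For that piece I would first observe that, because $H(\psi)$ is a nonzero simple $\h\otimes A$-module on which $\h_{\bar 0}\otimes A$ acts through the character $\psi$, the identity $(\h_{\bar 0}\otimes I_\psi)H(\psi)=0$ forces $\psi$ to vanish on $\h_{\bar 0}\otimes I_\psi$. Now consider the symmetric bilinear form $F_\psi$ on the odd space $\h_{\bar 1}\otimes A$ defined by $F_\psi(u,v)=\psi([u,v])$, the evident analogue of the form $F_\lambda$ from the Clifford-algebra discussion. For $u\in\h_{\bar 1}\otimes I_\psi$ and any $v\in\h_{\bar 1}\otimes A$ one has $[u,v]\in[\h_{\bar 1}\otimes I_\psi,\h_{\bar 1}\otimes A]\subseteq\h_{\bar 0}\otimes I_\psi$, using $[\h_{\bar 1},\h_{\bar 1}]\subseteq\h_{\bar 0}$ together with the fact that $I_\psi$ is an ideal of $A$, so $F_\psi(u,v)=\psi([u,v])=0$. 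Thus $\h_{\bar 1}\otimes I_\psi$ lies in the radical $\ker F_\psi$ of $F_\psi$, and it suffices to prove that $\ker F_\psi$ kills $H(\psi)$.

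This last step is where the real work lies, and it uses the Clifford-module nature of $H(\psi)$: the action of $\h\otimes A$ on the simple module $H(\psi)$ factors through the Clifford algebra of the non-degenerate quotient of $F_\psi$, which one can quote from \cite{CMS16}, or establish directly in a few lines. For the direct argument, fix $u\in\ker F_\psi$; then $u^2=\tfrac12[u,u]$ acts on $H(\psi)$ as the scalar $\tfrac12\psi([u,u])=\tfrac12 F_\psi(u,u)=0$. I would then check that $uH(\psi)$ is an $\h\otimes A$-submodule of $H(\psi)$: for $h\in\h_{\bar 0}\otimes A$ we have $[h,u]\in[\h_{\bar 0}\otimes A,\h_{\bar 1}\otimes A]=0$, so $hu=uh$ as operators and $h(uw)=u(hw)$; while for $v\in\h_{\bar 1}\otimes A$ the bracket $[v,u]=[u,v]$ lies in $\h_{\bar 0}\otimes A$ and acts on $H(\psi)$ by $\psi([u,v])=F_\psi(u,v)=0$, and since $[v,u]=vu+uv$ as operators we get $vu=-uv$ on $H(\psi)$, so $v(uw)=-u(vw)$. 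As $H(\psi)$ is irreducible, $uH(\psi)$ is $0$ or all of $H(\psi)$; the latter is impossible, since it would give $H(\psi)=uH(\psi)=u^2H(\psi)=0$. Hence $uH(\psi)=0$, and applying this to all $u\in\h_{\bar 1}\otimes I_\psi^{n_\psi}\subseteq\h_{\bar 1}\otimes I_\psi\subseteq\ker F_\psi$ completes the argument. The main obstacle, as flagged, is precisely this odd-Cartan summand: it is the one part not delivered by the preceding lemmas, and it genuinely relies on how $\h_{\bar 1}\otimes A$ acts on the Clifford-type module $H(\psi)$.
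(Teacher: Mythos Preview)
Your proof is correct and follows the same triangular-decomposition strategy as the paper: split $\q\otimes I_\psi^{n_\psi}$ according to $\q=\n^-\oplus\h_{\bar 0}\oplus\h_{\bar 1}\oplus\n^+$, dispatch $\n^+$ via Definition~\ref{def4.1}, $\h_{\bar 0}$ via the definition of $I_\psi$, and $\n^-$ via Lemma~\ref{lem4.9}.

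The only difference is in handling the odd Cartan piece $\h_{\bar 1}\otimes I_\psi$. The paper simply cites \cite[Lemma~4.1]{CMS16} for $(\h_{\bar 1}\otimes I_\psi)w_\psi=0$; you instead give the underlying Clifford-module argument directly (radical elements square to zero and generate a proper submodule of the irreducible $H(\psi)$). Your direct argument is essentially a proof of the cited lemma in this context, so the two routes coincide in substance. One advantage of your write-up is that it is self-contained; a reader does not need to chase the reference to see why the odd Cartan acts trivially.
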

\begin{proof}
By \cite[Lemma 2.12]{CMS16}, to prove that $(\q\otimes I_{\psi}^{n_{\psi}}) H(\psi)=0$, it is enough to prove that $(\q\otimes I_{\psi}^{n_{\psi}}) w_{\psi}=0$ for any non-zero $w_{\psi}\in H(\psi)$. Since the triangular decomposition of $\q= \n^{-}+\h+\n^+$, we have to show that
\[(\n^{-}\otimes I_{\psi}^{n_{\psi}}) w_{\psi}= 0, \quad (\h \otimes I)^{n_{\psi}} w_{\psi} =0, \quad \n^{+}\otimes I_{\psi}^{n_{\psi}}) w_{\psi}= 0.\]
From the first relation in Definition \ref{def4.1} we have $(\n^{+}\otimes I_{\psi}) w_{\psi}= 0$ for all $w_{\psi}\in H(\psi)$ and this implies $(\n^{+}\otimes I_{\psi}^{n_{\psi}}) w_{\psi}= 0$. Also, $(\h_{\bar{0}} \otimes I) w_{\psi} =0$ by the definition of $I_{\psi}$. Again by \cite[Lemma 4.1]{CMS16}, $(\h_{\bar{1}} \otimes I) w_{\psi} =0$. From Lemma \ref{lem4.9}, we get $(\n^{-}\otimes I_{\psi}^{n_{\psi}}) w_{\psi}= 0$.
\end{proof}
Now we give a sufficient conditions for local Weyl modules to be finite dimensional.

\begin{thm}
Assume that $A$ is finitely generated. Then the local Weyl module $W_A^{\mbox{loc}}(\psi)$ is finite dimensional for all $\psi \in \mathcal{L}(\h\otimes A)$ such that $\lambda= \psi \mid_{\h_{\bar{0}}} \in \Lambda^{+}$.
\end{thm}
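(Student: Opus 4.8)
The plan is to reduce finite-dimensionality of $W_A^{\mathrm{loc}}(\psi)$ to the already-established finite-dimensionality of the $\q$-module $\bar{L}(\lambda)$ (Proposition \ref{prop3}) via a standard PBW filtration argument, using the two structural results just proved: that $W_A^{\mathrm{loc}}(\psi)$ is a quotient of the global Weyl module (hence lies in $I(\q\otimes A,\q_{\bar 0})$, so is a direct sum of finite-dimensional $\q_{\bar 0}$-modules) and that $(\q\otimes I_\psi^{\,n_\psi})H(\psi)=0$ by Corollary \ref{cor4.9}. The first thing I would do is fix a set of generators $a_1,\dots,a_m$ of $A$ and observe that, since $I_\psi$ has finite co-dimension and $A$ is finitely generated, $I_\psi^{\,n_\psi}$ also has finite co-dimension, so $A/I_\psi^{\,n_\psi}$ is a finite-dimensional algebra; choose a finite set $\{b_1,\dots,b_N\}\subseteq A$ whose images span $A/I_\psi^{\,n_\psi}$.

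Next I would show that $W_A^{\mathrm{loc}}(\psi)$ is spanned over $\C$ by the vectors obtained by applying monomials in $\n^-\otimes\{b_1,\dots,b_N\}$ to $H(\psi)$. Indeed $W_A^{\mathrm{loc}}(\psi)=\mathbf{U}(\n^-\otimes A)H(\psi)$ because of the highest-weight relations in Definition \ref{def4.1} (the $\n^+\otimes A$ part kills $w_\psi$ and $\h\otimes A$ acts by $\psi$, so the triangular decomposition \eqref{eq8} tensored with $A$ collapses the positive and Cartan factors). Then, using Corollary \ref{cor4.9}, every occurrence of $x\otimes a$ with $x\in\n^-$ and $a\in I_\psi^{\,n_\psi}$ annihilates $H(\psi)$, and modulo commutators (which only introduce terms of the same or shorter length, by the Lie bracket $[x\otimes a,y\otimes b]=[x,y]\otimes ab$ together with $\h\otimes A$ and $\n^+\otimes A$ acting in a controlled way on $H(\psi)$), each factor $x\otimes a$ can be rewritten using $a\equiv\sum c_j b_j \pmod{I_\psi^{\,n_\psi}}$. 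Hence $W_A^{\mathrm{loc}}(\psi)$ is a quotient of $\mathbf{U}\big(\n^-\otimes(A/I_\psi^{\,n_\psi})\big)$ applied to the finite-dimensional space $H(\psi)$; since $\n^-\otimes(A/I_\psi^{\,n_\psi})$ is a finite-dimensional Lie superalgebra, this is still not obviously finite-dimensional — its universal enveloping superalgebra is infinite-dimensional because of the even part.

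This is where the hypotheses genuinely needed are the $\fsl(2)$-type relations $f_i^{\lambda(h_i)+1}w_\psi=0$ and Lemma \ref{lem4.5}. The key remaining step is: for each $\alpha\in\Phi_{\bar 0}^+$, the subspace $(y_\alpha\otimes A)H(\psi)$ is finite-dimensional (Lemma \ref{lem4.5}), and more precisely the action factors through a finite-dimensional quotient in each root direction, controlled by $\lambda(h_\alpha)$. Combining this with the fact that $W_A^{\mathrm{loc}}(\psi)\in I(\q\otimes A,\q_{\bar 0})$ decomposes as a sum of finite-dimensional $\q_{\bar 0}$-modules whose weights lie in $\lambda-Q^+$ and are stable under the (finite) Weyl group $W=\mathfrak{S}_n$, I would argue that the set $\mathrm{wt}(W_A^{\mathrm{loc}}(\psi))$ is finite: it is a $W$-stable subset of $(\lambda-Q^+)$, and $W$-stability together with lying below $\lambda$ forces it into a finite box (this is the standard argument that an integrable highest-weight module over a finite-dimensional reductive Lie algebra has finite weight support — here applied to $\q_{\bar 0}=\gl(n)$). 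Finally, each weight space is finite-dimensional: by Lemma \ref{lem4.5} the action of $y_\alpha\otimes A$ on any vector lands in the span of finitely many $(y_\alpha\otimes a_1^{\ell_1}\cdots a_m^{\ell_m})$-translates with $\ell_i<\lambda(h_\alpha)$, so iterating over the finitely many negative roots $\alpha\in\Phi^-$ (noting $\dim\q_\alpha=(1\mid 1)$ so each root contributes one even and one odd generator, the odd one squaring to an even element by \eqref{eq8}/the Clifford relations) shows $\mathbf{U}(\n^-\otimes A)H(\psi)$ is spanned by finitely many monomials applied to the finite-dimensional $H(\psi)$. Putting the two finitenesses together — finitely many weights, each weight space finite-dimensional — gives $\dim W_A^{\mathrm{loc}}(\psi)<\infty$.

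The main obstacle I anticipate is the bookkeeping in the spanning argument: controlling, via repeated use of Lemma \ref{lem4.5} and the commutation relations $[x\otimes a,y\otimes b]=[x,y]\otimes ab$, that reordering a monomial in the $(y_\alpha\otimes a^r)$ into a normal-ordered form (negative roots in a fixed order, bounded exponents, odd generators appearing at most once) only produces terms already accounted for, so that the induction on height used in Lemma \ref{lem4.9} combines cleanly with the exponent bound from Lemma \ref{lem4.5}. The odd root vectors require a little care since they do not generate a polynomial algebra, but by the PBW Lemma \ref{lem3} each odd generator $y_\alpha'$ contributes a factor in $\{0,1\}$, so they only multiply the dimension by a finite constant and do not affect finiteness. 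Everything else is assembling results already in hand: Proposition \ref{prop3}, Corollary \ref{cor4.9}, Lemma \ref{lem4.5}, and Theorem \ref{thm3.6}.
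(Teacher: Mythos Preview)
Your plan assembles the same two ingredients the paper uses: (i) via Lemma~\ref{lem4.9}/Corollary~\ref{cor4.9} and PBW, the module is spanned by monomials in $\n^-\otimes B$ applied to $H(\psi)$, where $B$ is a finite-dimensional complement of $I_\psi^{\,n_\psi}$ in $A$; and (ii) the weight set is finite, because $W_A^{\mathrm{loc}}(\psi)\in\mathcal I_{\q\otimes A,\q_{\bar 0}}$ forces $W$-stability of the weights inside $\lambda-Q^+$. So far this is exactly the paper's route.

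The gap is in your final step. You try to get ``each weight space is finite-dimensional'' from Lemma~\ref{lem4.5}, but Lemma~\ref{lem4.5} only controls $(y_\alpha\otimes A)w_\psi$ for $w_\psi\in H(\psi)$, not for an arbitrary vector in the module; and in any case bounding the exponents $\ell_i$ on the $A$-side does \emph{not} bound the number of factors in a PBW monomial --- e.g.\ $(y_\alpha\otimes 1)^k$ has bounded $A$-exponent for every $k$. So ``iterating over the finitely many negative roots'' does not yield ``finitely many monomials'' without a separate bound on the length. The paper closes this gap in one line: each factor $y_\alpha\otimes b$ lowers the weight by a positive root of height $\ge 1$, so a monomial of length $d$ sends $H(\psi)\subseteq W_\lambda$ into $W_{\lambda-\gamma}$ with $\het(\gamma)\ge d$; since the weight set is finite there is an $N$ with $W_A^{\mathrm{loc}}(\psi)=\mathbf U_N(\n^-\otimes A/I_\psi^{\,n_\psi})H(\psi)$, and $\mathbf U_N$ of a finite-dimensional Lie superalgebra is finite-dimensional. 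You already have (i) and (ii), so you only need to replace your Lemma~\ref{lem4.5} paragraph with this filtration-degree bound; the separate appeal to Lemma~\ref{lem4.5} (and to Proposition~\ref{prop3}) is then unnecessary.
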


\begin{proof}
By Definition \ref{def4.1}, we have $W_A^{\mbox{loc}}(\psi) = \mathbf{U}(\n^-\otimes A)H(\psi)$. Also, by Lemma \ref{lem4.9}, we have $(\n^{-}\otimes I_{\psi}^{n_{\psi}}) H(\psi)= 0$. Thus, 
\[W_A^{\mbox{loc}}(\psi) = \mathbf{U}(\n^-\otimes A/I_{\psi}^{n_{\psi}})H(\psi).\] Since the set of $\q$-weights of $W_A^{\mbox{loc}}(\psi)$ is finite, there exists $N\in \N$ such that 
\[W_A^{\mbox{loc}}(\psi) = \mathbf{U}_{n}(\n^-\otimes A/I_{\psi}^{n_{\psi}})H(\psi), \;\;\mbox{for all}\,\;\; n\geq N,\] where $ \mathbf{U}(\mathfrak{g}) =  \sum_{n=0}^{\infty}\mathbf{U}_{n}(\g)$ is the usual filtration on the universal enveloping algebra of a Lie superalgebra $\g$ induced from the natural grading on the tensor algebra. Since the Lie superalgebra $\n^-\otimes A/I_{\psi}^{n_{\psi}}$ and $H(\psi)$ are finite dimensional, the local Weyl module $W_A^{\mbox{loc}}(\psi)$ is finite dimensional. 
\end{proof}

\begin{thm}\label{thm4.11}
Let $\psi \in \mathcal{L}(\h\otimes A)$ such that $\lambda= \psi \mid_{\h_{\bar{0}}} \in \Lambda^{+}$. Then any finite dimensional local Weyl module with highest map-weight $\psi$ in the category $I(\q\otimes A, \q_{\bar{0}})$  is isomorphic to $W_A^{\mbox{loc}}(\psi)$ or $\Pi W_A^{\mbox{loc}}(\psi)$. Furthermore, if any finite dimensional object $V(\psi)\in I(\q\otimes A, \q_{\bar{0}})$ is generated by an irreducible module $H(\psi)$ of map-weight $\psi$, then there exists a surjective homomorphism form $W_A^{\mbox{loc}}(\psi)$ to $V(\psi)$ (up to $\Pi$). 
\end{thm}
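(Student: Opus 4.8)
The plan is to follow the proof of Theorem~\ref{thm3.6} almost verbatim, with Definition~\ref{def4.1} (the presentation of $W_A^{\mathrm{loc}}(\psi)$ by generators and relations) playing the role that Proposition~\ref{prop3.5} played in the global case.

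I would first dispose of the ``Furthermore'' assertion. Let $V(\psi)\in I(\q\otimes A,\q_{\bar 0})$ be finite dimensional and generated by an irreducible $H(\psi)$ of map-weight $\psi$; then $(\n^+\otimes A)w_\psi=0$ and $xw_\psi=\psi(x)w_\psi$ for all $w_\psi\in H(\psi)$, $x\in\h_{\bar 0}\otimes A$, by hypothesis. The only remaining defining relation of $W_A^{\mathrm{loc}}(\psi)$, namely $f_i^{\lambda(h_i)+1}w_\psi=0$ for $i\in I$, is automatic: since $V(\psi)$ restricts to a direct sum of finite-dimensional irreducible $\q_{\bar 0}$-modules, $\wt(V(\psi))$ is stable under the Weyl group of $\q_{\bar 0}$; as $V(\psi)$ is a highest weight module of highest weight $\lambda$ we have $\lambda+\alpha_i\notin\wt(V(\psi))$, and since $s_{\alpha_i}\bigl(\lambda-(\lambda(h_i)+1)\alpha_i\bigr)=\lambda+\alpha_i$, the weight vector $f_i^{\lambda(h_i)+1}w_\psi$ must vanish (this is exactly the argument of the lemma immediately following Definition~\ref{def4.1}). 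Because $\n^-\otimes A$ strictly lowers the $\h_{\bar 0}$-weight, the $\lambda$-weight space of $W_A^{\mathrm{loc}}(\psi)$ is precisely its generating copy of $H(\psi)$; identifying this with the generating copy of $H(\psi)$ inside $V(\psi)$ (these irreducible $\h\otimes A$-modules agree up to $\Pi$) and invoking the universal property built into Definition~\ref{def4.1}, I obtain a surjective homomorphism $W_A^{\mathrm{loc}}(\psi)\to V(\psi)$ of $\q\otimes A$-modules, up to $\Pi$.

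For the first assertion, let $\widetilde W$ be a finite-dimensional local Weyl module with highest map-weight $\psi$ in $I(\q\otimes A,\q_{\bar 0})$: by definition it is generated by an irreducible module of map-weight $\psi$---either $H(\psi)$ or $\Pi H(\psi)$---satisfying the relations of Definition~\ref{def4.1}, and it is universal among all such modules. After replacing $\widetilde W$ by $\Pi\widetilde W$ if necessary, I may assume it is generated by the same fixed copy of $H(\psi)$ used to build $W_A^{\mathrm{loc}}(\psi)$. Applying the surjectivity statement just proved, with $\widetilde W$ in place of $V(\psi)$, gives a surjection $\pi_1\colon W_A^{\mathrm{loc}}(\psi)\to\widetilde W$ restricting to the identity on $H(\psi)$; conversely $W_A^{\mathrm{loc}}(\psi)$ is finite dimensional (by the preceding theorem), lies in $I(\q\otimes A,\q_{\bar 0})$, is generated by $H(\psi)$, and satisfies the relations of Definition~\ref{def4.1}, so the universality of $\widetilde W$ provides a surjection $\pi_2\colon\widetilde W\to W_A^{\mathrm{loc}}(\psi)$, again restricting to the identity on $H(\psi)$. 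Then $\pi_2\circ\pi_1$ and $\pi_1\circ\pi_2$ are surjective endomorphisms of finite-dimensional vector spaces, hence bijective, so $\pi_1$ is an isomorphism; undoing the reduction yields $\widetilde W\cong W_A^{\mathrm{loc}}(\psi)$ or $\widetilde W\cong\Pi W_A^{\mathrm{loc}}(\psi)$.

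I expect the only genuinely delicate point to be the bookkeeping with the parity-change functor: since $H(\psi)$ is determined only up to $\Pi$, one must arrange that $\pi_1$ and $\pi_2$ are chosen compatibly---both restricting to the identity on one fixed copy of $H(\psi)$---before composing them, so that finite-dimensionality forces them to be mutually inverse. Everything else is a routine transcription of the global-weight-module argument, with the Weyl-group invariance of weights (available precisely because objects of $I(\q\otimes A,\q_{\bar 0})$ restrict to direct sums of finite-dimensional $\q_{\bar 0}$-modules) supplying the relation $f_i^{\lambda(h_i)+1}w_\psi=0$ for free.
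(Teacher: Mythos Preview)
Your proposal is correct and follows essentially the same approach as the paper: establish the surjection $W_A^{\mathrm{loc}}(\psi)\twoheadrightarrow V(\psi)$ by checking that the relation $f_i^{\lambda(h_i)+1}w_\psi=0$ is forced by Weyl-group invariance of weights (the paper invokes the same argument, phrased as ``the $\q_{\bar 0}$-module generated by $H(\psi)$ is finite dimensional''), and then for uniqueness obtain mutual surjections between two candidate universal objects and conclude from finite-dimensionality. Your treatment of the $\Pi$-bookkeeping is in fact more careful than the paper's.
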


\begin{proof}
Let $V(\psi)\in I(\q\otimes A, \q_{\bar{0}})$ be a finite dimensional highest map-weight $\q\otimes A$-module with highest map-weight $\psi$ is generated by an irreducible module $H(\psi)$. Then by Definition \ref{def4.1},
\[(\n^+ \otimes A)v= 0, \quad hv= \lambda(h) v, \quad \mbox{for all}\;\; h\in \h_{\bar{0}},\;\; v\in H(\psi).\]
Since the $\q_{\bar{0}}$-module generated by $H(\psi)$ is finite dimensional, then $f_{i}^{\lambda(h_{i})+1}v =0$ for all $v\in H(\psi)$ and $i\in I$. Thus, we have a surjective homomorphism $W_{A}^{\mbox{loc}}(\psi)\to V(\psi)$ induced by $W_A^{\mbox{loc}}(\psi)_{\lambda} \to H(\psi)$ (up to $\Pi$).

Suppose $W_A'(\psi)$ is another object in $I(\q\otimes A, \q_{\bar{0}})$ that is generated  by an irreducible module $\mathbf{m}(\psi)$ with highest map-weight $\psi$ and admits a surjective homomorphism to any object of  $I(\q\otimes A, \q_{\bar{0}})$ which is also generated by  highest map-weight vectors of map-weight $\psi$. Then $W_A'(\psi)$ is a quotient of $W_A^{\mbox{loc}}(\psi)$ (up to $\Pi$) and vice-versa. Since both modules are finite dimensional,  $W_A^{\mbox{loc}}(\psi)\cong W_A'(\psi)$ or $\Pi(W_A^{\mbox{loc}}(\psi)) \cong W_A'(\psi)$.
\end{proof}

\begin{cor}
Let $\psi \in \mathcal{L}(\h\otimes A)$ such that $\lambda= \psi \mid_{\h_{\bar{0}}} \in \Lambda^{+}$. Then the local Weyl module $W_A^{\mbox{loc}}(\psi)$ is the maximal finite dimensional quotient of the global Weyl module $W_A(\lambda)$ (up to $\Pi$) that is a highest map-weight module of highest map-weight $\psi$.
\end{cor}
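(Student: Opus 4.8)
The plan is to package together results already in hand, the main engine being Theorem~\ref{thm4.11}; the corollary is essentially a matter of checking that $W_A^{\mbox{loc}}(\psi)$ is both one of the modules in question and an upper bound for all of them.

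\emph{Step 1: $W_A^{\mbox{loc}}(\psi)$ is such a module.} By Definition~\ref{def4.1} the module $W_A^{\mbox{loc}}(\psi)$ is generated by the irreducible $\h\otimes A$-module $H(\psi)$ on which $\n^+\otimes A$ acts by zero and $\h_{\bar 0}\otimes A$ acts through $\psi$, so it is a highest map-weight module of highest map-weight $\psi$; by the finite-dimensionality theorem just proved it is finite dimensional; and, as noted in the discussion preceding Lemma~\ref{lem4.4}, Theorem~\ref{thm3.6} realizes it (up to $\Pi$) as a quotient of the global Weyl module $W_A(\lambda)$, and in particular places it in $I(\q\otimes A,\q_{\bar 0})$. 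Hence $W_A^{\mbox{loc}}(\psi)$ belongs to the family of finite-dimensional quotients of $W_A(\lambda)$ (up to $\Pi$) that are highest map-weight modules of highest map-weight $\psi$.

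\emph{Step 2: every such module is a quotient of $W_A^{\mbox{loc}}(\psi)$.} Let $V$ be finite dimensional such that $V$ (or $\Pi V$) is a quotient of $W_A(\lambda)$ and $V$ is a highest map-weight module of highest map-weight $\psi$, generated by the irreducible $H(\psi)$. Being a highest map-weight module gives $(\n^+\otimes A)v=0$ and $xv=\psi(x)v$ for $x\in\h_{\bar 0}\otimes A$ and $v\in H(\psi)$. A surjection $W_A(\lambda)\to V$ (up to $\Pi$) preserves weight spaces, so it carries $W_A(\lambda)_{\lambda}$ onto $V_\lambda=H(\psi)$, and the relation $f_i^{\lambda(h_i)+1}w_\lambda=0$ from \eqref{eq20} therefore transfers to $f_i^{\lambda(h_i)+1}v=0$ for all $v\in H(\psi)$ and $i\in I$. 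Thus $V$ satisfies all the defining relations of $W_A^{\mbox{loc}}(\psi)$ in Definition~\ref{def4.1}; since $V$ is a finite-dimensional object of $I(\q\otimes A,\q_{\bar 0})$ (being a quotient of $W_A(\lambda)\in I(\q\otimes A,\q_{\bar 0})$, which is closed under quotients), Theorem~\ref{thm4.11} supplies a surjection $W_A^{\mbox{loc}}(\psi)\to V$ up to $\Pi$.

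Combining the two steps, $W_A^{\mbox{loc}}(\psi)$ is itself a finite-dimensional quotient of $W_A(\lambda)$ (up to $\Pi$) that is a highest map-weight module of highest map-weight $\psi$, and every other such module is a quotient of it (up to $\Pi$); this is precisely the claimed maximality, and it pins $W_A^{\mbox{loc}}(\psi)$ down uniquely up to $\Pi$. I do not anticipate a real obstacle here: the substantive inputs — finite dimensionality of the local Weyl module and the universal property of Theorem~\ref{thm4.11} — are already established, and the only delicate point is to keep the bookkeeping of the parity functor $\Pi$ consistent throughout, since every surjection in sight is defined only up to $\Pi$ and the maximality must accordingly be read modulo $\Pi$, exactly as in the statement.
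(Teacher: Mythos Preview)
Your proposal is correct and is exactly the intended argument: the paper states this corollary without proof, regarding it as an immediate consequence of Theorem~\ref{thm4.11} together with the finite-dimensionality of $W_A^{\mbox{loc}}(\psi)$ and its realization (via Theorem~\ref{thm3.6}) as a quotient of $W_A(\lambda)$, which is precisely what you have spelled out. The only minor redundancy is that in Step~2 you need not separately deduce $f_i^{\lambda(h_i)+1}v=0$ from the surjection $W_A(\lambda)\to V$, since Theorem~\ref{thm4.11} already handles this internally from the finite dimensionality of $V$; but this does no harm.
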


\begin{rmk}
By \cite[Theorem 5.6]{CMS16}, any finite dimensional $\q\otimes A$-module is a highest map-weight module for some $\psi \in \mathcal{L}(\h\otimes A)$ such that $\lambda= \psi \mid_{\h_{\bar{0}}} \in \Lambda^{+}$. Then by Theorem \ref{thm4.11}, there exists a surjective homomorphism from the local Weyl module $W_A^{\mbox{loc}}(\psi)$ to such a module (up to $\Pi$). Equivalently, all finite dimensional $\q\otimes A$-modules are quotients of loal Weyl modules (up to $\Pi$).
\end{rmk}

\section{ Tensor product of local Weyl modules}

If $A$ and $B$ are associative unitary algebras, then all irreducible representations of $A\otimes B$ are of the form $V_A\otimes V_{B}$. Further, all such modules are irreducible. However, when $A$ and $B$ are allowed to be superalgebras, then  $V_A\otimes V_{B}$ is not necessarily irreducible.

If $\g_i$ for $i=1, 2$ are two finite dimensional Lie superalgebras, and $V_i$ is an irreducible finite-dimensional $\g_i$-module for $i=1, 2$, then $\g_1\oplus \g_2$-module $V_1\otimes V_2$ is irreducible only if $\End_{\g_i}(V_i)_{\bar{1}} = 0$ for some $i =1, 2$. When $\End_{\g_i}(V_i)_{\bar{1}} = \C\phi_i, \phi_i^2 = -1$ for $i =1$ and $i=2$, we have
\[\widehat{V} = \{v\in V_1\otimes V_2 \mid (\tilde{\phi_1}\otimes \phi_2)(v) = v\}, \quad \mbox{where}\;\; \tilde{\phi_1} = \sqrt{-1}\phi_1\] is an irreducible $\g_1\oplus \g_2$-submodule of $V_1\otimes V_2$ such that $V_1\otimes V_2\cong \widehat{V} \oplus \widehat{V}$ (see \cite[p.27]{CHE95}). Now we set
\[V_1\widehat{\otimes}V_2 = \begin{cases}
V_1\otimes V_2& \mbox{if $V_1\otimes V_2$ is irreducible},\\
\widehat{V}\subsetneq V_1\otimes V_2& \mbox{if $V_1\otimes V_2$ is not irreducible}.
\end{cases}\] 
If $V_i$ is an irreducible finite-dimensional $\g_i$-module for $i=1, 2$, then it is proved that every irreducible finite dimensional $\g_1\oplus \g_2$-module is isomorphic to a module of the form $V_1\widehat{\otimes}V_2$ (see \cite[Prop. 8.4]{CHE95}).

Given an ideal $I$ of $A$, we define its support to be the set 
\[\mbox{Supp}(I) = \{\m \in \mbox{MaxSpec$(A)$} \mid I\subseteq \m \}.\]

\begin{thm}\label{tensorprod}
Assume that $A$ is finitely generated.  Let $\psi_{1}, \psi_2 \in \mathcal{L}(\h\otimes A)$ such that $\lambda_1\mid_{\h_{\bar{0}}}= \psi_1, \lambda_2\mid_{\h_{\bar{0}}}= \psi_2$ and suppose that $\lambda_1, \lambda_2 \in \Lambda^{+}$ such that $\lambda_1+\lambda_2\in \Lambda^{+}$. If $\mbox{Supp}(I_{\psi_1}) \cap \mbox{Supp}(I_{\psi_2}) = \emptyset$, then we have
\[W_A^{\mbox{loc}}(\psi_1)\otimes W_A^{\mbox{loc}}(\psi_2) \cong \begin{cases}
W_A^{\mbox{loc}}(\psi_1 +\psi_2), \; \mbox{or}\\
W_A^{\mbox{loc}}(\psi_1 +\psi_2)\oplus W_A^{\mbox{loc}}(\psi_1 +\psi_2)
\end{cases}\] as $\q\otimes A$-modules.
\end{thm}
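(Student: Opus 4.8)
The plan is to use the disjointness of supports to split $A$ by the Chinese Remainder Theorem, to recognise $W_A^{\mbox{loc}}(\psi_1)\otimes W_A^{\mbox{loc}}(\psi_2)$ as an external (outer) tensor product over a direct sum $\mathfrak{a}_1\oplus\mathfrak{a}_2$ of two Lie superalgebras of the form $\q\otimes(\text{finite-dimensional quotient of }A)$, and then to identify it with $W_A^{\mbox{loc}}(\psi_1+\psi_2)$ (up to the evident ambiguities) by producing surjections in both directions.

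\textbf{Splitting and the outer tensor product.} By Corollary \ref{cor4.9}, for $n$ large each $W_A^{\mbox{loc}}(\psi_i)$ is a module for $\mathfrak{a}_i:=\q\otimes(A/I_{\psi_i}^{n})$. Since $\Supp(I_{\psi_1})\cap\Supp(I_{\psi_2})=\emptyset$ we have $I_{\psi_1}+I_{\psi_2}=A$, hence $I_{\psi_1}^{n}+I_{\psi_2}^{n}=A$, $I_{\psi_1}^n\cap I_{\psi_2}^n=I_{\psi_1}^nI_{\psi_2}^n$, and $A/I_{\psi_1}^nI_{\psi_2}^n\cong A/I_{\psi_1}^n\times A/I_{\psi_2}^n$; correspondingly $\q\otimes A/I_{\psi_1}^nI_{\psi_2}^n\cong\mathfrak{a}_1\oplus\mathfrak{a}_2$ as Lie superalgebras (the cross bracket vanishes because the two factor rings multiply to zero), and similarly $\h\otimes A/I_{\psi_1}^nI_{\psi_2}^n\cong(\h\otimes A/I_{\psi_1}^n)\oplus(\h\otimes A/I_{\psi_2}^n)$. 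Under these identifications $W_A^{\mbox{loc}}(\psi_1)$ is an $\mathfrak{a}_1$-module on which $\mathfrak{a}_2$ acts by zero, and symmetrically; so the coproduct makes $W_A^{\mbox{loc}}(\psi_1)\otimes W_A^{\mbox{loc}}(\psi_2)$ the external tensor product of an $\mathfrak{a}_1$-module and an $\mathfrak{a}_2$-module. It is finite dimensional, lies in $\mathcal{I}_{\q\otimes A,\q_{\bar 0}}$ by Lemma \ref{lemclosed}, and is cyclic on $v_1\otimes v_2$, where $v_i\in H(\psi_i)$ is a cyclic generator of $W_A^{\mbox{loc}}(\psi_i)$ (any nonzero vector works, $H(\psi_i)$ being irreducible over $\h\otimes A$). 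Its top weight space is $H(\psi_1)\otimes H(\psi_2)$, an outer tensor product of irreducibles over $(\h\otimes A/I_{\psi_1}^n)\oplus(\h\otimes A/I_{\psi_2}^n)$; by the criterion for such products recalled at the start of this section, it is either irreducible — hence $\cong H(\psi_1+\psi_2)$ by uniqueness of $H(-)$, since $\h_{\bar 0}\otimes A$ acts on it by $\psi_1+\psi_2$ — or a direct sum of two copies of $H(\psi_1+\psi_2)$, the latter occurring exactly when $\End_{\h\otimes A}(H(\psi_i))_{\bar 1}\neq 0$ for both $i$. Here $\psi_1+\psi_2\in\mathcal{L}(\h\otimes A)$ and $(\psi_1+\psi_2)|_{\h_{\bar 0}}=\lambda_1+\lambda_2\in\Lambda^+$, so $W_A^{\mbox{loc}}(\psi_1+\psi_2)$ is defined and finite dimensional.

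\textbf{A surjection out of $W_A^{\mbox{loc}}(\psi_1+\psi_2)$.} The vector $v_1\otimes v_2$ is annihilated by $\n^+\otimes A$, transforms under $\h_{\bar 0}\otimes A$ by $\psi_1+\psi_2$, and satisfies $f_i^{(\lambda_1+\lambda_2)(h_i)+1}(v_1\otimes v_2)=0$ for all $i\in I$: directly, because $f_i\otimes 1$ acts on the product as a sum of two commuting even operators of nilpotency indices $\lambda_1(h_i)+1$ and $\lambda_2(h_i)+1$, or from the $W$-invariance of $\q_{\bar 0}$-weights of modules in $\mathcal{I}_{\q\otimes A,\q_{\bar 0}}$ as in the lemma preceding Lemma \ref{lem4.4}. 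In the irreducible case of the previous paragraph, $v_1\otimes v_2$ generates $H(\psi_1+\psi_2)$ over $\h\otimes A$, so Definition \ref{def4.1} and Theorem \ref{thm4.11} yield a surjection $W_A^{\mbox{loc}}(\psi_1+\psi_2)\twoheadrightarrow W_A^{\mbox{loc}}(\psi_1)\otimes W_A^{\mbox{loc}}(\psi_2)$ up to $\Pi$. In the reducible case the same argument gives a surjection from $W_A^{\mbox{loc}}(\psi_1+\psi_2)$ onto the $\q\otimes A$-submodule generated by $v_1\otimes v_2$, which (by cyclicity) is all of $W_A^{\mbox{loc}}(\psi_1)\otimes W_A^{\mbox{loc}}(\psi_2)$; together with the next step this forces $W_A^{\mbox{loc}}(\psi_1)\otimes W_A^{\mbox{loc}}(\psi_2)\cong W_A^{\mbox{loc}}(\psi_1+\psi_2)\oplus W_A^{\mbox{loc}}(\psi_1+\psi_2)$.

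\textbf{The reverse surjection and conclusion.} Since $(\psi_1+\psi_2)(\h_{\bar 0}\otimes I_{\psi_1}I_{\psi_2})=0$ we have $I_{\psi_1}I_{\psi_2}\subseteq I_{\psi_1+\psi_2}$, so by Lemma \ref{lem4.9} the module $W_A^{\mbox{loc}}(\psi_1+\psi_2)$ also factors through $\q\otimes A/I_{\psi_1}^nI_{\psi_2}^n\cong\mathfrak{a}_1\oplus\mathfrak{a}_2$ for $n$ large. Let $w\in H(\psi_1+\psi_2)$ be a cyclic generator. As $\mathfrak{a}_1$ and $\mathfrak{a}_2$ commute, $\mathbf{U}(\mathfrak{a}_1\oplus\mathfrak{a}_2)\cong\mathbf{U}(\mathfrak{a}_1)\otimes\mathbf{U}(\mathfrak{a}_2)$ by PBW, and $M_i:=\mathbf{U}(\mathfrak{a}_i)w$ is a finite-dimensional highest-map-weight $\mathfrak{a}_i$-module in $\mathcal{I}$ whose map-weight is the copy of $\psi_i$ inside $(\h_{\bar 0}\otimes A/I_{\psi_1}^nI_{\psi_2}^n)^{*}$ (one checks that $\psi_j$ annihilates the idempotent defining the $i$-th factor and that $\psi_i(\h_{\bar 0}\otimes 1)=\lambda_i$); hence $M_i$ is a quotient of $W_A^{\mbox{loc}}(\psi_i)$ by Theorem \ref{thm4.11}. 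Because the two families of defining relations decouple across the PBW factorisation, $W_A^{\mbox{loc}}(\psi_1+\psi_2)$ — generated by $H(\psi_1)\otimes H(\psi_2)$ and inheriting, on each $M_i$, the defining relations of $W_A^{\mbox{loc}}(\psi_i)$ — is a quotient of the external product $W_A^{\mbox{loc}}(\psi_1)\otimes W_A^{\mbox{loc}}(\psi_2)$. Two surjections between finite-dimensional modules are isomorphisms, and combining this with the previous step and the dichotomy for $H(\psi_1)\otimes H(\psi_2)$ gives precisely the two asserted possibilities.

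\textbf{Main obstacle.} Everything else reduces to the Chinese Remainder Theorem, the PBW theorem for $\mathfrak{a}_1\oplus\mathfrak{a}_2$, and the universal property of local Weyl modules over the finite-dimensional algebras $A/I_{\psi_i}^n$ (Theorem \ref{thm4.11}); the genuinely delicate point is the last step, namely making precise that $W_A^{\mbox{loc}}(\psi_1)\otimes W_A^{\mbox{loc}}(\psi_2)$ is the universal $\mathfrak{a}_1\oplus\mathfrak{a}_2$-module generated by $H(\psi_1)\otimes H(\psi_2)$ subject to the combined relations, and carrying this out while keeping consistent track of the parity-reversing functor $\Pi$ and of the Clifford-type dichotomy ($\End(H(\psi_i))_{\bar 1}\neq 0$ for both $i$) that produces the "$\oplus$" alternative.
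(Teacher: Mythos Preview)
Your overall strategy coincides with the paper's: use the Chinese Remainder Theorem to factor through $\mathfrak{a}_1\oplus\mathfrak{a}_2$, obtain the surjection $W_A^{\mathrm{loc}}(\psi_1+\psi_2)\twoheadrightarrow W_A^{\mathrm{loc}}(\psi_1)\otimes W_A^{\mathrm{loc}}(\psi_2)$ from the universal property (Theorem~\ref{thm4.11}), and then produce a surjection in the reverse direction so that finite-dimensionality forces an isomorphism. You also track the Clifford dichotomy for $H(\psi_1)\otimes H(\psi_2)$ more explicitly than the paper does, which is a genuine improvement in exposition of where the ``$\oplus$'' alternative comes from.

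The substantive difference is in the reverse surjection. The paper does not work inside $W_A^{\mathrm{loc}}(\psi_1+\psi_2)$; instead it introduces the Verma-type induced modules $M(\psi_i)=\mathbf{U}(\q\otimes A/I^n)\otimes_{\mathbf{U}(\bb\otimes A/I^n)}H(\psi_i)$, proves the \emph{external} isomorphism $M(\psi_1+\psi_2)\cong M(\psi_1)\otimes M(\psi_2)$ directly from PBW and the identification of highest weight spaces, and then truncates each $M(\psi_i)$ to a finite-dimensional highest map-weight quotient $\bar M(\psi_i)$ so that Theorem~\ref{thm4.11} applies. The advantage is that the tensor factorisation is built in from the start, so one never has to argue that relations ``decouple''. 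Your route via the submodules $M_i=\mathbf{U}(\mathfrak{a}_i)w$ is more direct but leaves a step implicit: to get a well-defined map $W_A^{\mathrm{loc}}(\psi_i)\to M_i$ sending $v_i\mapsto w$ you need the full annihilator of $v_i$ in $\mathbf{U}(\mathfrak{a}_i)$ to kill $w$, which in particular requires $\mathbf{U}(\h\otimes A/I_{\psi_i}^n)w\cong H(\psi_i)$ as $\h\otimes A$-modules. This is true (even in the type-$\mathtt{Q}$ case, using that the odd endomorphism of $H(\psi_i)$ super-commutes with the module action so all nonzero vectors share the same left annihilator), but it is not verified in your sketch; the paper's induced-module argument sidesteps this point entirely. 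Otherwise your argument is sound.
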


\begin{proof}
Let $\psi_{1}, \psi_2 \in \mathcal{L}(\h\otimes A)$ such that $\lambda_1\mid_{\h_{\bar{0}}}= \psi_1, \lambda_2\mid_{\h_{\bar{0}}}= \psi_2$ and suppose that $\lambda_1, \lambda_2 \in \Lambda^{+}$ such that $\lambda_1+\lambda_2\in \Lambda^{+}$. Let the local Weyl module $W_A^{\mbox{loc}}(\psi_i)$ associated to $\psi_i$ (upto $\Pi$)  be the $\q \otimes A$-module generated by $H(\psi_i)$ for $i=1, 2$.  Let $\rho_i$ be the representation corresponding to $W_A^{\mbox{loc}}(\psi_i)$  for $i=1, 2$. By Corollary \ref{cor4.9}, there exists $n_1, n_1\in \N$ such that $(\q\otimes I_{\psi_i}^{n_i})w_{\psi_i} = 0$ for all $w_{\psi_i}\in H(\psi_i)$  for $i=1, 2$. Notice that $W_A^{\mbox{loc}}(\psi_1)\otimes W_A^{\mbox{loc}}(\psi_2)$, as $(\q\otimes A/I_{\psi_1}^{n_1})\oplus (\q\otimes A/I_{\psi_2}^{n_2})$-module generated by $H(\psi_1)\otimes H(\psi_2)$ is either irreducible or is isomorphic to $\widehat{V} \oplus \widehat{V}$ where $\widehat{V}\subsetneq W_A^{\mbox{loc}}(\psi_1)\otimes W_A^{\mbox{loc}}(\psi_2)$ is an irreducible $(\q\otimes A/I_{\psi_1}^{n_1})\oplus (\q\otimes A/I_{\psi_2}^{n_2})$-module.

Now the representation $\rho_1\otimes \rho_2$ factors through the composition 
\begin{equation}\label{eq5.1}
\q\otimes A \xhookrightarrow{\textsf{\tiny{d}}} (\q\otimes A)\oplus (\q\otimes A) \twoheadrightarrow (\q\otimes A/I_{\psi_1}^{n_1})\oplus (\q\otimes A/I_{\psi_2}^{n_2})
\end{equation}
where first map is the diagonal map and the second map is the projection on each summand. By \cite[Lemma 2.1]{CLS19}, we have that $A= I_{\psi_1}^{n_1}+ I_{\psi_2}^{n_2}$ and $I_{\psi_1}^{n_1} \cap I_{\psi_2}^{n_2} = I_{\psi_1}^{n_1}I_{\psi_2}^{n_2}$, since $\mbox{Supp}(I_{\psi_1}) \cap \mbox{Supp}(I_{\psi_2}) = \emptyset$. Therefore, we have the following commutative diagram:
\begin{center}
$\xymatrix{
\q\otimes A \ar@{->>}[d] \ar@{^{(}->}[r]& (\q\otimes A)\oplus (\q\otimes A) \ar@{->>}[d]\\
\q\otimes A/I_{\psi_1}^{n_1}I_{\psi_2}^{n_2}\ar[r]^{}& (\q\otimes A/I_{\psi_1}^{n_1})\oplus (\q\otimes A/I_{\psi_2}^{n_2}) }$
\end{center}
It follows that the composition \eqref{eq5.1} is surjective.  By the surjective of \eqref{eq5.1}, it follows that $W_A^{\mbox{loc}}(\psi_1)\otimes W_A^{\mbox{loc}}(\psi_2)$, as $(\q\otimes A)$-module generated by $H(\psi_1)\otimes H(\psi_2)$ is either irreducible or is isomorphic to $\widehat{V} \oplus \widehat{V}$  where $\widehat{V}\subsetneq W_A^{\mbox{loc}}(\psi_1)\otimes W_A^{\mbox{loc}}(\psi_2)$. Moreover, $\h_{\bar{0}}\otimes A$ acts on $ w_{\psi_1}\otimes w_{\psi_2}$ as follows:
\begin{align*}
 x (w_{\psi_1}\otimes w_{\psi_2})  &= x w_{\psi_1}\otimes w_{\psi_2}  \oplus w_{\psi_1} \otimes xw_{\psi_2} = \psi_1(x)w_{\psi_1}\otimes w_{\psi_2}  \oplus w_{\psi_1}\otimes\psi_2(x)w_{\psi_2}\\
 &= \psi_1(x)(w_{\psi_1}\otimes w_{\psi_2} ) \oplus \psi_2(x)(w_{\psi_1}\otimes w_{\psi_2})\\
&= (\psi_1+\psi_2)(x)(w_{\psi_1}\otimes w_{\psi_2)}, \quad \mbox{for all}\quad w_{\psi_i} \in H({\psi_i}), x\in \h_{\bar{0}}\otimes A, i=1, 2
 \end{align*} and 

\[(\n^{+} \otimes A)(w_{\psi_1}\otimes w_{\psi_2}) = (\n^{+} \otimes A)w_{\psi_1}\otimes w_{\psi_2} \oplus w_{\psi_1}\otimes  (\n^{+} \otimes A)w_{\psi_2}= 0.\]
Thus, $W_A^{\mbox{loc}}(\psi_1)\otimes W_A^{\mbox{loc}}(\psi_2)$ is a finite dimensional highest map-weight module generated by $H(\psi_1)\otimes H(\psi_2)$ of highest map-weight $\psi_1+\psi_2$. Therefore, by Theorem \ref{thm4.11}, $W_A^{\mbox{loc}}(\psi_1)\otimes W_A^{\mbox{loc}}(\psi_2)$ is a quotient of $W_A^{\mbox{loc}}(\psi_1 +\psi_2)$.

By \cite[Theorem 4.3]{CMS16}, there exists a unique (up to $\Pi)$ irreducible finite dimensional $\h\otimes A$-module $H(\psi_1+\psi_2)$ such that $xv = (\psi_1+\psi_2)(x)v, x\in \h_{\bar{0}}\otimes A, v \in H(\psi_1+\psi_2)$. Let $I = I_{\psi_1} \cap I_{\psi_2}= I_{\psi_1}I_{\psi_2}$ and $n = n_{\psi_1+\psi_2}$. Then $I\subseteq I_{\psi_1+\psi_2}$ and hence $(\h_{\bar{0}}\otimes I_{\psi_1+\psi_2}^n)H(\psi_1+\psi_2) =0$. So, the action of $\mathfrak{b}\otimes A$ on $H(\psi_1+\psi_2)$ descends to an action of $\mathfrak{b}\otimes A/I^n$ on $H(\psi_1+\psi_2)$. Now consider the induced module 
\[M(\psi_1+\psi_2) := \mathbf{U}(\q\otimes A/I_{\psi_1+\psi_2}^n)\otimes_{\mathbf{U}(\mathfrak{b}\otimes A/I^n)} H(\psi_1+\psi_2).\]
It follows that $W_A^{\mbox{loc}}(\psi_1 +\psi_2)$ is a quotient of $M(\psi_1+\psi_2)$.

On the other hand, since $\mathfrak{b}\otimes A$ module $H(\psi_1+\psi_2)$ is irreducible, by \cite[Prop 6.3]{CMS16}, we have  $H(\psi_1+\psi_2)$ and $H(\psi_1)\otimes H(\psi_2)$ are isomorphic. Hence,
\begin{align*}
M(\psi_1+\psi_2) &=  \mathbf{U}(\q\otimes A/I_{\psi_1+\psi_2}^n)\otimes_{\mathbf{U}(\mathfrak{b}\otimes A/I_{\psi_1+\psi_2}^n)} H(\psi_1+\psi_2)\\
&\cong  \mathbf{U}(\q\otimes (A/I_{\psi_1}^{n}\oplus A/I_{\psi_2}^n))\otimes_{\mathbf{U}(\mathfrak{b}\otimes (A/I_{\psi}^n\oplus A/I_{\psi_2}^n))} H(\psi_1)\otimes H(\psi_2)\\
&\cong \left(  \mathbf{U}(\q\otimes (A/I_{\psi_1}^{n})\right)\otimes \left(  \mathbf{U}(\q\otimes (A/I_{\psi_1}^{n})\right) \otimes_{\mathbf{U}(\mathfrak{b}\otimes (A/I_{\psi_1}^{n}))\otimes \mathbf{U}(\mathfrak{b}\otimes (A/I_{\psi_1}^{n}))}(H(\psi_1)\otimes H(\psi_2))\\
& \cong  \left( \mathbf{U}(\q\otimes (A/I_{\psi_1}^{n})) \otimes_{\mathbf{U}(\mathfrak{b}\otimes (A/I_{\psi_1}^{n}))} H(\psi_1)\right)\otimes  \left( \mathbf{U}(\q\otimes (A/I_{\psi_2}^{n})) \otimes_{\mathbf{U}(\mathfrak{b}\otimes (A/I_{\psi_2}^{n}))} H(\psi_2)\right)\\
&= M(\psi_1)\otimes M(\psi_2).
\end{align*}
Since $W_A^{\mbox{loc}}(\psi_1 +\psi_2)$ is a quotient of $M(\psi_1+\psi_2)$, so $W_A^{\mbox{loc}}(\psi_1 +\psi_2)$ is a quotient of $M(\psi_1)\otimes M(\psi_2)$ and hence we can fix a surjection \[\eta: M(\psi_1)\otimes M(\psi_2)\twoheadrightarrow W_A^{\mbox{loc}}(\psi_1 +\psi_2).\]
Then one can show that the image of $M(\psi_1)_{\mu_1}\otimes M(\psi_2)_{\mu_2}$ under the map $\eta$ is zero except for a finite number of weights $\mu_1$ and $\mu_2$ and let $D_i$ be the such finite set of weights. Now for $i =1, 2$, let $M(\psi_i)'$ be the submodule of $M(\psi_i)$ generated by the weight subspaces $M(\psi_i)_{\lambda_i}$ with $\lambda_i \not\in D_i$, and let $\bar{M}(\psi_i) = M(\psi_i)/M(\psi_i)'$. Then $W_A^{\mbox{loc}}(\psi_1 +\psi_2)$ is a quotient of $\bar{M}(\psi_1)\otimes \bar{M}(\psi_2)$. Since $I_{\psi_i}$ has finite co-dimension and there are only a finite number of weights occurring in the quotient $\bar{M}(\psi_i)$, this module is a finite dimensional highest map-weight module of highest map-weight $\psi_i$. Hence, by Theorem \ref{thm4.11}, it is a quotient of $W_A^{\mbox{loc}}(\psi_i)$. Thus, $\bar{M}(\psi_1)\otimes \bar{M}(\psi_2)$ is a quotient of $W_A^{\mbox{loc}}(\psi_1) \otimes W_A^{\mbox{loc}}(\psi_2)$ and this implies $W_A^{\mbox{loc}}(\psi_1 +\psi_2)$ is a quotient of $W_A^{\mbox{loc}}(\psi_1) \otimes W_A^{\mbox{loc}}(\psi_2)$. Since the modules $W_A^{\mbox{loc}}(\psi_1 +\psi_2)$ and $W_A^{\mbox{loc}}(\psi_1) \otimes W_A^{\mbox{loc}}(\psi_2)$ are both finite dimensional, and one is quotient of other implies that $W_A^{\mbox{loc}}(\psi_1 +\psi_2)\cong W_A^{\mbox{loc}}(\psi_1) \otimes W_A^{\mbox{loc}}(\psi_2)$. 
\end{proof}

Note that $W_A^{\mbox{loc}}(\psi_1)$ and $W_A^{\mbox{loc}}(\psi_2)$ satisfy the hypothesis of Theorem \ref{tensorprod}, then 
\[W_A^{\mbox{loc}}(\psi_1)\widehat{\otimes} W_A^{\mbox{loc}}(\psi_2) \cong W_A^{\mbox{loc}}(\psi_1+\psi_2).\]


\end{document}